\newtheorem{theorem}{Theorem}
\newtheorem{proposition}[theorem]{Proposition}
\newtheorem{lemma}[theorem]{Lemma}
\newtheorem{corollary}[theorem]{Corollary}
\newtheorem{definition}{Definition}
\newtheorem{assumption}{Assumption}
\theoremstyle{definition}
\newtheorem{example}{Example}
\newtheorem{remark}{Remark}
\def\proof@qed{\ifmmode \mathqed \else \leavevmode\unskip\penalty9999 \hbox{}\nobreak\hfill
  \quad\hbox{\qedsymbol}\fi}
\begin{document}

\begin{CJK}{UTF8}{gbsn}

\title{Constrained Hamiltonian Systems on Observation-Induced Fiber Bundles: Theory of Symmetry and Integrability}
\author{Dongzhe Zheng\thanks{Department of Mechanical and Aerospace Engineering, Princeton University\\ Email: \href{mailto:dz5992@princeton.edu}{dz5992@princeton.edu}, \href{mailto:dz1011@wildcats.unh.edu}{dz1011@wildcats.unh.edu}}}
\date{}
\maketitle

\begin{abstract}
Classical constrained Hamiltonian theory assumes complete observability of system states, but in reality only partial state information is often available. This paper establishes a complete geometric theoretical framework for handling such incompletely observed systems. By introducing the concept of observation-induced fiber bundles, we naturally extend Dirac constraint theory to the fiber bundle setting, unifying the treatment of state constraints and observation constraints. Main results include: (1) Classification of existence conditions for observation fiber bundles based on characteristic class theory; (2) Complete characterization of Poisson structures on fiber bundles and corresponding symplectic reduction theory; (3) Geometric necessary and sufficient conditions for integrability and Lax pair construction; (4) Extension of Noether's theorem under symmetry group actions. The theoretical framework naturally encompasses a wide range of applications from classical mechanics to modern safety-critical control systems, providing a rigorous mathematical foundation for dynamical analysis under incomplete information.
\end{abstract}

\section{Introduction}

The mathematical theory of constrained dynamical systems originated from the establishment of Lagrangian mechanics in the 18th century\cite{arnold1989mathematical}, followed by Hamilton's canonical formalization\cite{hamilton1834general}, Jacobi's geometric insights\cite{jacobi1866vorlesungen}, and the modern development of Dirac's constraint quantization theory\cite{dirac1964lectures, dirac1950generalized} in the 20th century, forming the theoretical foundation of contemporary classical mechanics. Traditional constraint theory primarily focuses on the geometric properties of constraint submanifolds in configuration space, handling the dynamics of constrained systems through tools such as Dirac brackets\cite{dirac1950generalized} and the classification of first-class and second-class constraints\cite{henneaux1992quantization}. However, with the widespread emergence of information incompleteness problems in modern physics and engineering technology, this classical framework has begun to reveal fundamental limitations.

The core assumption of traditional constraint theory is complete observability of system states, i.e., constraint conditions $\phi(x) = 0$ can directly act on precisely known states $x \in M$. This idealization is extremely convenient in theoretical analysis but often does not hold in practical applications. The ubiquitous problems of observational incompleteness, sensor noise, and measurement precision limitations in modern control systems\cite{khalil2002nonlinear}, robotics\cite{murray1994mathematical}, and quantum measurement theory\cite{nielsen2000quantum} make traditional deterministic constraint theory difficult to apply directly. The recently emerging fields of safety-critical control\cite{ames2019control} and constrained optimization\cite{nocedal2006numerical} urgently need mathematical theory that can systematically handle constraint satisfaction under uncertain observations.

\subsection{Limitations of Existing Methods and Theoretical Challenges}

Current main approaches to handling constraint problems under observation uncertainty can be categorized into several types. Probabilistic constraint methods\cite{nemirovski2007convex} handle uncertainty through stochastic optimization but often lose the geometric structure of dynamical systems, making it difficult to preserve symplectic properties and conservation laws. Robust control theory\cite{zhou1996robust}, while capable of handling bounded uncertainties, typically employs worst-case analysis that is overly conservative and lacks geometric insight. Filtering methods such as extended Kalman filtering\cite{jazwinski1970stochastic} can handle dynamic observations but essentially still treat uncertainty as external disturbance to be estimated and eliminated.

The common limitation of these methods is treating observation uncertainty as an external disturbance to the original system, rather than an integral component of the intrinsic geometric structure. From a geometric viewpoint, when system state $x \in M$ can only be indirectly obtained through observation mapping $h: M \to Y$ with observation uncertainty $\epsilon$, the true system state should be the pair $(x, y)$ where $y = h(x) + \epsilon$. The core insight of this viewpoint is that observation uncertainty is not noise to be eliminated, but an intrinsic component of the system's geometric structure.

\subsection{The Innovative Perspective of Fiber Bundle Geometry}

The fundamental contribution of this paper lies in proposing a completely new geometric framework for handling observation-constrained systems: observation-induced fiber bundle theory. Our core insight is that when system states can only be partially observed, the natural geometric structure is no longer the traditional constraint submanifold, but a fiber bundle formed by the coupling of state space and observation information. This viewpoint combines Ehresmann connection theory\cite{ehresmann1950connexions} with modern constraint analysis, providing a revolutionary geometric perspective for constrained dynamics.

Specifically, we combine the Hamiltonian system $(M, \omega, H)$ with observation mapping $h: M \to Y$ to construct an observation-induced fiber bundle $\pi: E \to M$, where each fiber $\pi^{-1}(x)$ encodes all possible observation values at state $x$. Constraint conditions no longer act directly on state space $M$, but are defined on the total space $E$, naturally unifying state dynamics with observation uncertainty. The key advantages of this geometrization are: observation uncertainty transforms from external disturbance to intrinsic variation in fiber coordinates, constraint conditions acquire a fiberized geometric representation, and system symmetries can act simultaneously on base space and fiber directions.

\subsection{Deep Motivation and Physical Insights of Geometrization}

Our geometrization approach is inspired by fiber bundle theory\cite{steenrod1951topology} and Yang-Mills theory\cite{yang1954conservation} in modern differential geometry. In Yang-Mills theory, the geometric structure of gauge fields is precisely described through connections on principal bundles, and our observation-adaptive connection has a similar geometric status. The deeper physical motivation comes from the equivalence principle in general relativity: just as gravity can be locally equivalent to coordinate transformations, observation uncertainty can also be locally "gauged away" through appropriate geometric structures.

The physical insight of this geometric viewpoint is that it reveals the deep connection between observation and constraints. In our framework, constraints not only limit the motion of the system but also determine the geometric distribution of observation information. This viewpoint has profound correspondence in quantum mechanics: the measurement process itself changes the geometric structure of the system, which is precisely the manifestation of our fiber bundle approach at the classical level.

\subsection{Main Contributions}

This paper establishes a complete mathematical theory of constrained Hamiltonian systems on observation-induced fiber bundles, achieving important theoretical breakthroughs in the field of constrained dynamics. Our main contributions include:

\begin{enumerate}
\item \textbf{Establishment of Geometric Foundation Theory:} We establish a complete topological classification for the existence of observation-induced fiber bundles, identifying Euler class obstruction as a necessary condition and providing geometric sufficient conditions based on connection flatness. On this foundation, we develop complete symplectic geometry theory on fiber bundles, including construction of observation-adaptive connections, geometric characterization of closedness conditions, and deep extensions of classical Marsden-Weinstein symplectic reduction\cite{marsden1974reduction}. Furthermore, we establish geometric necessary and sufficient conditions for complete integrability under observation constraints, construct corresponding Lax pair structures, and extend the Arnold-Liouville theorem\cite{arnold1989mathematical} to fiber bundle settings.

\item \textbf{Unified Theory of Symmetry and Conservation Laws:} We characterize the geometric structure of observation symmetry groups and establish principal bundle representation theory for handling group actions that act simultaneously on state space and observation space. Within this framework, we prove Noether's theorem\cite{noether1918invariante} for observation-dependent systems, capable of handling observation-dependent conservation laws that traditional theory cannot encompass. Simultaneously, we construct moment map theory on fiber bundles, providing complete geometric tools for symplectic reduction under observation constraints.

\item \textbf{Application Verification and Modern Connections:} We demonstrate how to reformulate classical integrable systems such as Toda lattice and rigid body dynamics within our framework, verifying the theory's unity and inclusiveness. More importantly, we provide rigorous geometric foundations for modern control methods such as Control Barrier Functions\cite{ames2019control} and constrained model predictive control\cite{rawlings2017model}, particularly providing complete mathematical support for "Learning Dynamics under Environmental Constraints via Measurement-Induced Bundle Structures"\cite{zheng2025learning}, validating the theory's effectiveness in practical systems such as soft robotics, robotic arm control, and quadrotor navigation, and demonstrating application potential in machine learning and control fields.
\end{enumerate}

\subsection{Theoretical Significance and Future Impact}

The theoretical contribution of this paper lies not only in solving the mathematical treatment problem of constrained dynamics under observation uncertainty, but also in opening new research directions for the intersection of constraint theory, control theory, and geometric mechanics. Our fiber bundle approach provides a unified mathematical language for understanding the interaction between information, constraints, and dynamics in complex systems. This geometric viewpoint is expected to play important roles in frontier fields such as quantum control\cite{nielsen2000quantum}, multi-agent systems\cite{olfati2007consensus}, and geometric methods in machine learning\cite{amari2016information}.

Particularly, our theory provides a solid mathematical foundation for the design of modern safety-critical systems. In application fields with extremely high safety requirements such as autonomous driving\cite{paden2016survey}, aerospace\cite{stevens2015aircraft}, and biomedical engineering\cite{bronzino2000biomedical}, our geometric framework can fully utilize the intrinsic geometric structure of systems while ensuring safety constraints, providing theoretical guidance for the development of next-generation intelligent control systems.

\section{Geometric Structure of Observation-Induced Fiber Bundles}

\begin{assumption}[Basic Geometric Framework]
Let $M$ be a $2n$-dimensional symplectic manifold equipped with symplectic form $\omega$. The working region $\mathcal{W} \subset M$ is an open set, and the observation map $h: \mathcal{W} \to Y$ satisfies:
\begin{itemize}
\item[(1)] \textbf{Local diffeomorphism condition:} $\text{rank}(dh) = k$ holds constantly on $\mathcal{W}$
\item[(2)] \textbf{Working region completeness:} $\mathcal{W}$ is a connected open set, $\overline{\mathcal{W}}$ is compact
\item[(3)] \textbf{Boundary regularity:} $\partial \mathcal{W}$ is a smooth submanifold, $h$ continuously extends on $\partial \mathcal{W}$
\end{itemize}
\end{assumption}

\begin{definition}[Local Observation-Induced Fiber Bundle]
The local observation-induced fiber bundle $(\pi: E_{\mathcal{W}} \to \mathcal{W}, h, \delta, \rho)$ is defined as:
\begin{enumerate}[label=(\arabic*)]
\item \textbf{Domain restriction:} Base manifold restricted to working region $\mathcal{W}$
\item \textbf{Observation manifold:} $Y$ is a $k$-dimensional Riemannian manifold
\item \textbf{Observation mapping:} $h: \mathcal{W} \to Y$ is a $C^\infty$ diffeomorphism onto image, satisfying:
   \begin{itemize}
   \item $\text{rank}(dh) = k$ holds constantly on $\mathcal{W}$
   \item For any $x \in \mathcal{W}$, there exists neighborhood $U_x \subset \mathcal{W}$ such that $h|_{U_x}: U_x \to h(U_x)$ is a diffeomorphism
   \item Boundary extension: $h$ continuously extends on $\partial \mathcal{W}$
   \end{itemize}
\item \textbf{Bounded uncertainty function:} $\delta: \overline{\mathcal{W}} \to (0, \Delta]$ is a $C^\infty$ function satisfying:
   \begin{itemize}
   \item Strict positivity: $\delta_{\min} := \inf_{x \in \mathcal{W}} \delta(x) > 0$
   \item Global boundedness: $\Delta := \sup_{x \in \overline{\mathcal{W}}} \delta(x) < \infty$
   \item Gradient boundedness: $\|d\delta\|_{C^1(\overline{\mathcal{W}})} < \infty$
   \item Boundary behavior: $\delta(x) \to \Delta$ as $x \to \partial \mathcal{W}$
   \end{itemize}
\item \textbf{Fiber metric family:} $\rho = \{\rho_x\}_{x \in \overline{\mathcal{W}}}$, where for each $x \in \overline{\mathcal{W}}$, $\rho_x$ is an inner product on $T^*_{h(x)}Y$, satisfying:
   \begin{itemize}
   \item Non-degeneracy: $\rho_x$ is non-degenerate at each $x \in \overline{\mathcal{W}}$
   \item Smooth dependence: Metric $\rho_x$ varies smoothly with respect to base point $x$ on $\overline{\mathcal{W}}$
   \item Boundary regularity: $\rho_x$ remains bounded and non-degenerate on $\partial \mathcal{W}$
   \end{itemize}
\item \textbf{Total space:} $E_{\mathcal{W}} = \{(x,\xi) : x \in \mathcal{W}, \|\xi\|_{\rho_x} \leq \delta(x)\}$
\item \textbf{Boundary handling:} Near $\partial \mathcal{W}$, fiber radius approaches maximum value $\Delta$, providing safety buffer for observation uncertainty
\item \textbf{Projection mapping:} $\pi(x, \xi) = x$
\end{enumerate}
\end{definition}

\begin{remark}[Local Ideal Observation Convention]
We adopt the convention that ideal observation corresponds to $\xi = 0 \in T^*_{h(x)}Y$, i.e., observation error is zero. For each $x \in \mathcal{W}$, the fiber is defined as:
$$\pi^{-1}(x) = \{\xi \in T^*_{h(x)}Y : \|\xi\|_{\rho_x} \leq \delta(x)\}$$
This constitutes a closed ball centered at the zero element with radius $\delta(x)$ in the cotangent space $T^*_{h(x)}Y$.

\textbf{Geometric interpretation:}
Within the working region interior, each fiber has standard spherical geometric structure with radius varying in the range $[\delta_{\min}, \Delta]$, reflecting differences in observation accuracy at different positions. When system state approaches the boundary, the growth of observation uncertainty $\delta(x) \to \Delta$ provides additional buffering mechanism for the system. This design both conforms to physical intuition—observations near boundaries are often more inaccurate—and provides geometric safeguards for control algorithm safety.

The uniform boundedness of all fibers ensures that the entire fiber bundle has good geometric properties, including key topological features such as compactness, connectedness, and orientability. Particularly, the appropriate increase in radius at boundaries provides necessary safety margins for control algorithms, enabling the system to maintain stable performance even when approaching working region boundaries. This geometric design reflects the theoretical framework's deep understanding of practical application requirements.
\end{remark}

\begin{lemma}[Well-definedness of Local Fiber Bundle Structure]
Under the above definition, $\pi: E_{\mathcal{W}} \to \mathcal{W}$ indeed defines a smooth fiber bundle structure.
\end{lemma}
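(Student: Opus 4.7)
The plan is to verify the three standard requirements of a smooth locally trivial fiber bundle: that $E_{\mathcal{W}}$ is a smooth manifold (with fiber boundary), that $\pi$ is smooth, and that each point of $\mathcal{W}$ admits an open neighborhood over which $\pi$ trivializes with smooth transition data. Because the fiber over $x$ is the $\rho_x$-closed ball of radius $\delta(x)$ in $T^*_{h(x)}Y$, everything should reduce, after an orthonormal-frame construction and a radial rescaling by $\delta$, to the trivial closed unit ball bundle $U \times \overline{B}^k$.

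First I would fix $x_0 \in \mathcal{W}$ and, using the constant-rank and local-diffeomorphism clauses of the definition, choose an open neighborhood $U \subset \mathcal{W}$ of $x_0$ on which $h|_U: U \to h(U) \subset Y$ is a diffeomorphism onto its image. Picking local coordinates on $Y$ near $h(x_0)$ yields a smooth coframe of $T^*Y$ over $h(U)$, and pulling back via $(h|_U)^{-1}$ gives a smooth frame $(e^1(x),\dots,e^k(x))$ of the pulled-back cotangent bundle $h^*T^*Y$ over $U$. I would then apply Gram--Schmidt with respect to the smoothly varying inner product $\rho_x$ to obtain a $\rho_x$-orthonormal coframe $(\varepsilon^1(x),\dots,\varepsilon^k(x))$; the non-degeneracy and smooth dependence of $\rho_x$ guarantee that this procedure is well defined and smooth on all of $U$.

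Next I would define the candidate local trivialization
\[
\Phi_U: \pi^{-1}(U) \longrightarrow U \times \overline{B}^k, \qquad (x,\xi) \longmapsto \bigl(x,\; \delta(x)^{-1}(c_1,\dots,c_k)\bigr),
\]
where $\xi = \sum_i c_i\,\varepsilon^i(x)$. Since $\delta$ is smooth and uniformly bounded below by $\delta_{\min}>0$, division by $\delta(x)$ is smooth; the fiber constraint $\|\xi\|_{\rho_x}\le \delta(x)$ is equivalent to $\sum_i (c_i/\delta(x))^2 \le 1$, so the image is exactly $U\times\overline{B}^k$, and the explicit inverse $(x,v)\mapsto \bigl(x, \delta(x)\sum_i v_i\varepsilon^i(x)\bigr)$ is smooth by the same reasoning. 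Commutation with $\pi$ is immediate. On overlaps $U\cap U'$, the change-of-coframe matrix between two $\rho_x$-orthonormal frames lies in $O(k)$ and varies smoothly with $x$, while the two $\delta(x)$-rescalings cancel, so the transition functions are smooth maps $U\cap U'\to O(k)$. This simultaneously exhibits local triviality and, by pushing forward the product smooth structure on $U\times\overline{B}^k$, gives a consistent smooth manifold-with-fiber-boundary atlas on $E_{\mathcal{W}}$ along with smoothness of $\pi$.

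The main obstacle I anticipate is bookkeeping rather than conceptual: the fibers are closed balls, so $E_{\mathcal{W}}$ must be treated as a manifold with boundary and one has to verify that the atlas obtained from the $\Phi_U$ is smoothly compatible up to the fiber boundary $\|\xi\|_{\rho_x}=\delta(x)$. This is routine once the rescaling by $\delta(x)$ is in place. No difficulty arises from the boundary $\partial\mathcal{W}$ of the base, since the lemma concerns $\pi$ over the open base $\mathcal{W}$; the extension data for $\delta$ and $\rho$ on $\overline{\mathcal{W}}$ from the definition enters only to supply the global bounds $\delta_{\min}>0$, $\Delta<\infty$ and the uniform non-degeneracy of $\rho_x$ that make the Gram--Schmidt step and the radial rescaling smooth on all of $U$.
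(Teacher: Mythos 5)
Your proof is correct and follows the same overall strategy as the paper's: use the local diffeomorphism property of $h$ to work over a small $U$, identify the fiber with a ball via a coframe of $h^*T^*Y$, rescale by $\delta(x)$, and then check smoothness of the transition data. The one genuine difference is your Gram--Schmidt step: by passing to a $\rho_x$-orthonormal coframe you make the trivialization carry the fiber $\{\|\xi\|_{\rho_x}\le\delta(x)\}$ \emph{exactly} onto $U\times\overline{B}^k$ and force the transition functions into $O(k)$, whereas the paper uses the raw coordinate coframe $dy^i$, so its map $\Phi(x,v)=(x,\delta(x)\psi_x(v))$ on $\{\|v\|\le 1\}$ identifies the fiber with the $\delta(x)$-ball in the coordinate-induced norm rather than in the $\rho_x$-norm, and the metric $\rho$ only enters afterwards through compatibility relations in the transition-function computation; your variant cleanly avoids that mismatch at the cost of the (routine) smoothness argument for Gram--Schmidt. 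Two minor remarks: the paper additionally records compactness and uniform boundedness of the fibers as a third verification step, which you omit --- it is not needed for the bundle structure itself, but it is used later in the paper, so it is worth a sentence; and your explicit handling of $E_{\mathcal{W}}$ as a manifold with fiber boundary, plus the observation that $\partial\mathcal{W}$ plays no role over the open base, is a point the paper glosses over and is welcome.
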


\begin{proof}
Three conditions need to be verified:

\textbf{1. Local trivialization:} For any $x_0 \in \mathcal{W}$, by the local diffeomorphism condition, there exists neighborhood $U \subset \mathcal{W}$ such that $h|_U: U \to h(U)$ is a diffeomorphism. Let $(y^1, \ldots, y^k)$ be local coordinates on $h(U)$.

The diffeomorphism $h|_U$ induces natural isomorphism:
$$\psi_x: \mathbb{R}^k \to T^*_{h(x)}Y, \quad v = (v_1, \ldots, v_k) \mapsto \sum_{i=1}^k v_i dy^i|_{h(x)}$$

Define local trivialization:
$$\Phi: U \times \{v \in \mathbb{R}^k : \|v\| \leq 1\} \to \pi^{-1}(U)$$
$$\Phi(x, v) = (x, \delta(x) \cdot \psi_x(v))$$

Since $\delta$ is bounded and continuous on $\overline{\mathcal{W}}$, and $h$ is a diffeomorphism on $\mathcal{W}$, $\Phi$ is a diffeomorphism.

\textbf{Boundary handling:} Near $\partial \mathcal{W}$, $\delta(x) \to \Delta$ ensures smooth fiber expansion, with local trivialization remaining well-defined.

\textbf{2. Smoothness of transition functions:} On overlap region $U_i \cap U_j \subset \mathcal{W}$, let local coordinates be $(y^1_i, \ldots, y^k_i)$ and $(y^1_j, \ldots, y^k_j)$ respectively.

The transition function is:
$$\Phi_j^{-1} \circ \Phi_i(x, v) = \left(x, \delta(x)^{-1} \psi_{x,j}^{-1} \circ \psi_{x,i}(\delta(x) v)\right)$$

where $\psi_{x,i}$ and $\psi_{x,j}$ correspond to coordinate systems $(y^1_i, \ldots, y^k_i)$ and $(y^1_j, \ldots, y^k_j)$ respectively.

Coordinate transformation matrix:
$$J_{ij}(x) = \left(\frac{\partial y^i_j}{\partial y^l_i}\right)_{h(x)}$$
The smoothness of $h$ guarantees that $J_{ij}(x)$ varies smoothly with respect to $x$.

Metric compatibility:
$$\rho_x(\psi_{x,i}(v), \psi_{x,i}(w)) = g_{ij}^{(i)}(x) v_i w_j$$
$$\rho_x(\psi_{x,j}(v'), \psi_{x,j}(w')) = g_{ij}^{(j)}(x) v'_i w'_j$$
where $g_{ij}^{(i)}(x) = \rho_x(dy^i_i|_{h(x)}, dy^j_i|_{h(x)})$ varies smoothly with respect to $x$.

\textbf{3. Compactness of fibers:} Each fiber $\pi^{-1}(x) = \{\xi \in T^*_{h(x)}Y : \|\xi\|_{\rho_x} \leq \delta(x)\}$ is a closed ball in cotangent space with respect to metric $\rho_x$, whose compactness is guaranteed by several geometric properties. Boundedness is ensured by global condition $\|\xi\|_{\rho_x} \leq \delta(x) \leq \Delta < \infty$, while closedness is obtained through standard topological properties of continuous norm functions and inequality constraints. More importantly, all fibers are contained within a uniform ball of radius $\Delta$, ensuring uniform compactness. Therefore each fiber is a compact set, and the entire fiber family maintains uniform boundedness, providing necessary topological foundation for subsequent geometric analysis.

\end{proof}

\begin{remark}[Practical Significance of Working Region]
The working region assumption has natural physical foundation and engineering reasonableness in practical applications. In mechanical systems, the choice of working region typically avoids singular configurations such as self-locking positions or joint limit positions of robotic arms, ensuring the system maintains good manipulability throughout the working process. For sensor systems, working region design avoids sensor blind spots or weak signal regions, guaranteeing quality and reliability of observation information. In control applications, working region ensures the system maintains local observability within safe operating regions, providing necessary information foundation for control algorithms.

The theoretical framework provides complete geometric structure within the working region. When system state approaches singular points or dangerous regions, robustness is maintained by increasing observation uncertainty $\delta(x)$. This design not only conforms to operational limitations of actual systems, but also provides theoretical guarantee for safety control in complex environments.
\end{remark}

\subsection{Local Connection and Curvature Structure}\label{subsec:local_connection}

\begin{definition}[Local Observation-Adaptive Connection]
An observation-adaptive connection $\nabla$ on working region $\mathcal{W}$ is a linear connection that preserves fiber structure, satisfying:
\begin{enumerate}[label=(\arabic*)]
\item \textbf{Metric compatibility:} $\nabla\rho = 0$
\item \textbf{Observation compatibility:} $\nabla$ preserves the horizontal-vertical decomposition $TE_{\mathcal{W}} = H_\nabla E_{\mathcal{W}} \oplus V E_{\mathcal{W}}$
\item \textbf{Vanishing torsion:} $T^\nabla = 0$ in base manifold directions
\item \textbf{Curvature control:} $\|R^\nabla\|_{L^\infty(\mathcal{W})} < \infty$
\end{enumerate}
\end{definition}

\begin{definition}[Observation-Adaptive Connection Coefficients-Corrected]
In local coordinates $(x^i, \xi^a)$, the coefficients of observation-adaptive connection are:

\textbf{Base manifold connection coefficients:}
$$\Gamma^k_{ij} = \Gamma^{LC,k}_{ij}(W)$$

\textbf{Fiber connection coefficients:}
$$\Gamma^a_{bc} = \frac{1}{2}\rho^{ad}(\nabla_b \rho_{cd} + \nabla_c \rho_{bd} - \nabla_d \rho_{bc})$$

\textbf{Mixed connection coefficients:}
$$\Gamma^a_{bi} = \frac{1}{2}\rho^{ac}(x)(\nabla_i \rho_{bc} + \nabla_b \rho_{ci} - \nabla_c \rho_{bi})$$

where $\nabla_i = \frac{\partial}{\partial x^i}$ is the covariant derivative in base manifold directions, and $\nabla_b$ is the covariant derivative in fiber directions.

\begin{remark}[Index Convention]
We adopt the standard convention: in $\Gamma^k_{ij}$, upper index $k$ is output, lower index $i$ is derivative direction, and $j$ is input component.
\end{remark}
\end{definition}

\begin{remark}[Necessity of Covariant Derivatives]
Using covariant derivatives rather than partial derivatives is geometrically necessary:
\begin{enumerate}
\item \textbf{Metric compatibility:} Condition $\nabla \rho = 0$ can only guarantee that connection preserves fiber metric in the covariant sense
\item \textbf{Coordinate independence:} Covariant derivatives ensure connection coefficients transform correctly under coordinate changes
\item \textbf{Fiber bundle geometry:} When fiber metric $\rho_{ab}(x)$ depends on base point, only covariant derivatives can correctly handle this dependence
\end{enumerate}

In local coordinates on flat base manifold, $\nabla_i \rho_{bc} = \partial_i \rho_{bc}$, but to maintain geometric rigor, we uniformly use covariant notation.
\end{remark}

\begin{remark}[Simplification of Local Flat Coordinates]
In the following analysis, we adopt \textbf{locally adapted coordinate systems} on the working region $\mathcal{W}$, where:

\begin{enumerate}
\item \textbf{Local flatness of base manifold:} On each local patch of $\mathcal{W}$, choose coordinate systems that make the base manifold Christoffel symbols $\Gamma^k_{ij}$ sufficiently small
\item \textbf{Observation-compatible coordinates:} Coordinate systems compatible with observation map $h$, making $h$ approximately linear locally
\item \textbf{Covariant-partial equivalence:} In such coordinate systems, for base direction derivatives of fiber metrics:
$$\nabla_i \rho_{ab} = \partial_i \rho_{ab} + O(\epsilon_{\text{curv}})$$
where $\epsilon_{\text{curv}} = \|\Gamma^k_{ij}\|_{\mathcal{W}} \ll 1$ is a small parameter of base manifold curvature
\end{enumerate}

This simplification holds rigorously in the following cases:
\begin{itemize}
\item Working region is sufficiently small that base manifold is approximately flat
\item Nonlinear terms of observation map can be neglected
\item Or the base manifold itself is flat space (such as open subset of $\mathbb{R}^{2n}$)
\end{itemize}

Therefore, $\partial_i$ in subsequent proofs should be understood as "covariant derivative in adapted coordinate systems."
\end{remark}

\begin{lemma}[Metric Compatibility Verification-Index Consistent Version]
Using unified index convention, metric compatibility $\nabla \rho = 0$ holds.
\end{lemma}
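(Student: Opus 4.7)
The plan is to verify $\nabla\rho = 0$ component-by-component by directly substituting the Koszul-style formulas for $\Gamma^a_{bc}$ and $\Gamma^a_{bi}$ into the standard identity
$$(\nabla_Z \rho)_{ab} = Z(\rho_{ab}) - \rho_{cb}\,\Gamma^c_{aZ} - \rho_{ac}\,\Gamma^c_{bZ},$$
splitting the check according to whether $Z$ is vertical ($\partial_c$) or horizontal ($\partial_i$). Because $\rho$ is a fiber metric, only the vertical–vertical components $\rho_{ab}$ are genuinely nontrivial; the mixed components $\rho_{bi}$ should be interpreted as zero (consistent with the horizontal–vertical splitting $TE_{\mathcal{W}} = H_\nabla E_{\mathcal{W}} \oplus VE_{\mathcal{W}}$ built into the definition of the adaptive connection), which collapses the mixed-connection formula to $\Gamma^a_{bi} = \tfrac{1}{2}\rho^{ac}\,\nabla_i \rho_{bc}$.

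First I would verify the purely vertical identity $\nabla_c \rho_{ab} = 0$. Substituting the Koszul formula for $\Gamma^d_{ac}$ into $\partial_c \rho_{ab} - \rho_{db}\Gamma^d_{ac} - \rho_{ad}\Gamma^d_{bc}$ and using $\rho_{db}\rho^{de} = \delta^e_b$, each correction term produces a symmetrized triple whose combination under the contraction yields exactly $-\tfrac12\partial_c\rho_{ab}$ twice, cancelling the leading $\partial_c\rho_{ab}$ term. This is essentially the classical Levi-Civita cancellation transplanted to fiber indices, and the observation-compatibility of $\nabla$ ensures that no additional cross-terms from the horizontal subbundle intrude.

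Next I would perform the mixed verification $\nabla_i \rho_{ab} = 0$. In the adapted coordinates described in the preceding remark, $\nabla_i$ reduces to $\partial_i$ up to $O(\epsilon_{\mathrm{curv}})$, and $\Gamma^c_{ai} = \tfrac12\rho^{cd}\partial_i\rho_{ad}$. Substitution into $\partial_i\rho_{ab} - \rho_{cb}\Gamma^c_{ai} - \rho_{ac}\Gamma^c_{bi}$ contracts with $\rho_{cb}\rho^{cd}=\delta^d_b$ and $\rho_{ac}\rho^{cd}=\delta^d_a$ respectively, yielding two copies of $\tfrac12\partial_i\rho_{ab}$ that cancel the leading term. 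Together with the vertical identity, this gives $\nabla\rho = 0$ on the total space.

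The main obstacle will be conceptual rather than computational: one must pin down the precise interpretation of the mixed symbols $\rho_{bi}$ and $\rho_{ci}$ that appear in the definition of $\Gamma^a_{bi}$, since $\rho$ is \emph{a priori} only an inner product on vertical vectors. The cleanest resolution is to fix the convention $\rho_{bi}\equiv 0$ induced by the horizontal–vertical decomposition, and to rely on the local-flat-coordinate remark to identify $\nabla_i$ with $\partial_i$ up to curvature corrections of order $\epsilon_{\mathrm{curv}}$; once these two conventions are in place, the verification becomes a pair of mechanical Koszul cancellations.
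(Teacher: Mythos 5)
Your proposal is correct and follows essentially the same route as the paper: the core step is the identical substitution $\Gamma^c_{ai}=\tfrac12\rho^{cd}\partial_i\rho_{da}$ into $(\nabla_i\rho)_{ab}=\partial_i\rho_{ab}-\Gamma^c_{ai}\rho_{cb}-\Gamma^c_{bi}\rho_{ac}$ with the contraction $\rho^{cd}\rho_{cb}=\delta^d_b$ and symmetry of $\rho$, and your convention $\rho_{bi}\equiv 0$ is exactly the collapse the paper uses to reduce $\Gamma^a_{bi}$ to $\tfrac12\rho^{ac}\nabla_i\rho_{bc}$. The only difference is cosmetic: your vertical Koszul cancellation is valid but unnecessary, since the paper exploits that $\rho_{ab}(x)$ is independent of the fiber coordinates, so $\partial_c\rho_{ab}=0$ and $\Gamma^a_{bc}=0$, making the vertical component of $\nabla\rho$ vanish trivially.
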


\begin{proof}
Calculate the covariant derivative of fiber metric:
\begin{align*}
(\nabla_i \rho)_{ab} &= \partial_i \rho_{ab} - \Gamma^c_{ai} \rho_{cb} - \Gamma^c_{bi} \rho_{ac}\\
&= \partial_i \rho_{ab} - \frac{1}{2}\rho^{cd} \partial_i \rho_{da} \cdot \rho_{cb} - \frac{1}{2}\rho^{cd} \partial_i \rho_{db} \cdot \rho_{ac}\\
&= \partial_i \rho_{ab} - \frac{1}{2}\partial_i \rho_{ab} - \frac{1}{2}\partial_i \rho_{ab}\\
&= 0
\end{align*}
where we used: $\Gamma^c_{ai} = \frac{1}{2}\rho^{cd} \partial_i \rho_{da}$ (mixed connection coefficients), $\rho^{cd}\rho_{cb} = \delta^d_b$ (metric inverse property), $\rho_{ab} = \rho_{ba}$ (metric symmetry)
\end{proof}

\begin{definition}[Geometric Construction of Observation-Induced Connection]
The observation map $h: \mathcal{W} \to Y$ induces fiber bundle connection:

\textbf{Pullback structure:}
The differential of observation map $dh: T\mathcal{W} \to TY$ induces fiber connection.

\textbf{Geometric meaning of connection coefficients:}
$$\Gamma^a_{bi} = -\frac{\partial h^c}{\partial x^i} \Gamma^Y_{cb}(h(x)) + K^a_{bi}$$

where: $\Gamma^Y_{cb}$ is the Levi-Civita connection on observation manifold $Y$, $K^a_{bi}$ is the metric compatibility correction term.

\textbf{Correction term:}
$$K^a_{bi} = \frac{1}{2}\rho^{ac}\left(\partial_i \rho_{bc} + \frac{\partial h^d}{\partial x^i} \Gamma^Y_{db}(h(x)) \rho_{dc} + \frac{\partial h^d}{\partial x^i} \Gamma^Y_{dc}(h(x)) \rho_{bd}\right)$$
\end{definition}

\begin{theorem}[Connection Existence]
On working region $\mathcal{W}$, observation-adaptive connection exists and is uniquely determined by metric compatibility, vanishing torsion, and observation compatibility conditions.
\end{theorem}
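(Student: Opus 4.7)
The plan is to prove existence and uniqueness in parallel, in the spirit of the fundamental theorem of Riemannian geometry: the three conditions (metric compatibility, observation compatibility, vanishing torsion) will be shown to overdetermine the connection components enough to fix them uniquely, while the explicit coefficient formulas given in the earlier definitions will serve as the candidate witnessing existence. I will decompose $TE_{\mathcal{W}} = H_\nabla E_{\mathcal{W}} \oplus V E_{\mathcal{W}}$ and analyze $\nabla$ through its three blocks of Christoffel symbols $\Gamma^k_{ij}$, $\Gamma^a_{bc}$, and $\Gamma^a_{bi}$ corresponding to base--base, fiber--fiber, and mixed directions.

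First I would handle the base block. Observation compatibility forces $\nabla$ to descend to a linear connection on $T\mathcal{W}$ under $\pi_*$; combined with vanishing torsion in base directions and metric compatibility with the base metric $W$ induced from $\omega$ and $\rho$ via $h$, the Koszul formula uniquely fixes $\Gamma^k_{ij} = \Gamma^{LC,k}_{ij}(W)$, exactly the coefficients prescribed in the earlier definition. Next I would address the fiber block: on each fiber, $\rho_x$ is a fixed inner product on $T^*_{h(x)}Y$, and within vertical directions torsion-freeness together with $\nabla\rho = 0$ is precisely the Koszul data for a Levi-Civita connection on the fiber, yielding the stated formula for $\Gamma^a_{bc}$ uniquely.

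The main work is the mixed block $\Gamma^a_{bi}$, which is where the observation-compatibility condition really bites. Here I would apply the metric-compatibility identity $\partial_i \rho_{ab} = \Gamma^c_{ai}\rho_{cb} + \Gamma^c_{bi}\rho_{ac}$ in locally adapted coordinates (the regime described in the simplification remark, so that $\nabla_i$ collapses to $\partial_i$ up to $O(\epsilon_{\mathrm{curv}})$), and then use the symmetry $\Gamma^c_{ai} = \Gamma^c_{ia}$ coming from the torsion-free condition together with observation compatibility to cyclically permute indices, exactly mimicking the classical Koszul argument. Solving the resulting linear system gives
\[
\Gamma^a_{bi} = \tfrac{1}{2}\rho^{ac}\bigl(\nabla_i \rho_{bc} + \nabla_b \rho_{ci} - \nabla_c \rho_{bi}\bigr),
\]
which matches the definition and simultaneously proves uniqueness: no alternative coefficients can satisfy all three conditions at once.

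With explicit formulas in hand, existence reduces to verifying that they assemble into a genuine connection on $E_{\mathcal{W}}$. I would check that $\nabla\rho = 0$ holds identically (already established by the earlier metric-compatibility lemma), that the torsion vanishes in base directions by symmetry of $\Gamma^k_{ij}$ and of the mixed coefficients in their lower base indices, and that the horizontal--vertical splitting is preserved by construction. The curvature bound $\|R^\nabla\|_{L^\infty(\mathcal{W})} < \infty$ follows from the assumed smoothness of $h$, the $C^1$-boundedness of $\delta$, and the smooth boundedness of $\rho$ on $\overline{\mathcal{W}}$. The step I expect to be genuinely delicate is the mixed-block Koszul manipulation, where the interaction between the Levi-Civita connection $\Gamma^Y$ pulled back by $h$ and the correction term $K^a_{bi}$ must be tracked carefully to ensure that metric compatibility and observation compatibility are not mutually inconsistent; this is exactly what the correction term was designed to absorb, and checking that it does so cleanly will be the technical heart of the argument.
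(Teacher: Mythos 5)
Your proposal follows essentially the same route as the paper's proof: a block-by-block determination of the connection, with the base part fixed as the Levi-Civita connection by torsion-freeness, the fiber part read off from the structure of $\rho$ (which, being independent of the fiber coordinates, makes the fiber Christoffel symbols vanish — a point worth stating explicitly), and the mixed coefficients $\Gamma^a_{bi} = \tfrac{1}{2}\rho^{ac}\partial_i\rho_{bc}$ pinned down by metric compatibility, after which existence is verified from the explicit formulas exactly as in the paper. The only cosmetic difference is that you dress the mixed-block step as a Koszul cyclic-permutation argument where the paper asserts it directly from $\nabla_i\rho_{bc}=0$; both rest on the same symmetry of the lowered mixed coefficients, so the content is the same.
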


\begin{proof}
\textbf{Step 1: Base manifold part}
The vanishing torsion condition uniquely determines the base manifold connection as Levi-Civita connection:
$$\Gamma^k_{ij} = \Gamma^{LC,k}_{ij}$$

\textbf{Step 2: Fiber part}
Since metric $\rho_{ab}(x)$ does not depend on fiber coordinates $\xi_a$:
$$\frac{\partial \rho_{bc}}{\partial \xi_a} = 0$$

Therefore fiber Christoffel symbols:
$$\Gamma^a_{bc} = \frac{1}{2}\rho^{ad}\left(\frac{\partial \rho_{cd}}{\partial \xi_b} + \frac{\partial \rho_{bd}}{\partial \xi_c} - \frac{\partial \rho_{bc}}{\partial \xi_d}\right) = 0$$

\textbf{Step 3: Mixed part}
The metric compatibility condition $\nabla_i \rho_{bc} = 0$ uniquely determines:
$$\Gamma^a_{bi} = \frac{1}{2}\rho^{ac} \partial_i \rho_{bc}$$

Therefore the connection exists uniquely.
\end{proof}

\begin{theorem}[Curvature Calculation and Control-Complete Version]
The curvature tensor of observation-adaptive connection satisfies:

\textbf{Base manifold curvature:}
$$R^{\nabla}{}^k{}_{lij} = R^{LC}{}^k{}_{lij}$$

\textbf{Mixed curvature-Complete calculation:}
$$R^{\nabla}{}^a{}_{bij} = \partial_i \Gamma^a_{jb} - \partial_j \Gamma^a_{ib} + \Gamma^a_{ic}\Gamma^c_{jb} - \Gamma^a_{jc}\Gamma^c_{ib}$$

Since $\Gamma^a_{bc} = 0$, cross term analysis:
\begin{align*}
\Gamma^a_{ic}\Gamma^c_{jb} &= \Gamma^a_{ic} \cdot \Gamma^c_{jb}\\
&= \begin{cases}
0 & \text{if } c \text{ is fiber index (since } \Gamma^c_{jb} = 0 \text{)}\\
\Gamma^a_{ik} \Gamma^k_{jb} & \text{if } c = k \text{ is base manifold index (but } \Gamma^a_{ik} = 0 \text{)}
\end{cases}\\
&= 0
\end{align*}

Similarly, $\Gamma^a_{jc}\Gamma^c_{ib} = 0$.

Therefore mixed curvature simplifies to:
$$R^{\nabla}{}^a{}_{bij} = \partial_i \Gamma^a_{jb} - \partial_j \Gamma^a_{ib}$$

\textbf{Fiber curvature:}
$$R^{\nabla}{}^a{}_{bcd} = 0$$

\textbf{Boundedness:}
$$\|R^\nabla\|_{L^\infty(\mathcal{W})} \leq C(\|h\|_{C^3(\mathcal{W})}, \|\rho\|_{C^2(\mathcal{W})}, \|g_{\mathcal{W}}\|_{C^2(\mathcal{W})}) < \infty$$
\end{theorem}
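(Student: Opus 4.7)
The plan is to expand the Riemann tensor of $\nabla$ in the adapted coordinates $(x^i,\xi^a)$ using the standard formula
\[
R^{\nabla}{}^{\mu}{}_{\nu\sigma\tau}=\partial_{\sigma}\Gamma^{\mu}_{\tau\nu}-\partial_{\tau}\Gamma^{\mu}_{\sigma\nu}+\Gamma^{\mu}_{\sigma\lambda}\Gamma^{\lambda}_{\tau\nu}-\Gamma^{\mu}_{\tau\lambda}\Gamma^{\lambda}_{\sigma\nu},
\]
and then exploit the block decomposition of $\nabla$ already established: the base-base block equals the Levi-Civita block $\Gamma^{k}_{ij}=\Gamma^{LC,k}_{ij}$, the pure fiber block $\Gamma^{a}_{bc}$ vanishes because $\rho_{bc}$ is $\xi$-independent, and the only nontrivial cross piece is $\Gamma^{a}_{bi}=\tfrac12 \rho^{ac}\partial_i\rho_{bc}$. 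Splitting the indices by type (base vs.\ fiber) reduces the statement to three separate coordinate computations.

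First I would handle the pure base block. With $\mu,\nu,\sigma,\tau$ all base, every $\Gamma^{\lambda}_{\cdot\cdot}$ appearing has a base upper index, so the summation index $\lambda$ is forced to be a base index (fiber choices kill the factors because the corresponding entries vanish). The resulting expression is literally the Levi-Civita curvature of the base, proving $R^{\nabla}{}^{k}{}_{lij}=R^{LC}{}^{k}{}_{lij}$. The pure fiber block is even simpler: since $\Gamma^{a}_{bc}=0$ and the metric has no $\xi$-dependence, both the derivative and the quadratic contractions vanish identically, giving $R^{\nabla}{}^{a}{}_{bcd}=0$.

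The mixed block requires the most care and is the main bookkeeping obstacle. Here $\mu=a$ is fiber and $\sigma=i$, $\tau=j$ are base, so only Christoffels with upper fiber index survive in the first pair of terms, leaving $\partial_i\Gamma^{a}_{jb}-\partial_j\Gamma^{a}_{ib}$. For the quadratic piece $\Gamma^{a}_{i\lambda}\Gamma^{\lambda}_{jb}-\Gamma^{a}_{j\lambda}\Gamma^{\lambda}_{ib}$ I split $\lambda$ into fiber and base. If $\lambda=c$ is a fiber index, then $\Gamma^{c}_{jb}=\Gamma^{c}_{ib}=0$ because a Christoffel of type $\Gamma^{\text{fiber}}_{\text{base, fiber}}$ forces at least one lower base index paired with a fiber upper index in the wrong configuration; the only allowed mixed symbol is $\Gamma^{a}_{bi}$, not $\Gamma^{c}_{jb}$. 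If $\lambda=k$ is a base index, then $\Gamma^{a}_{ik}$ would be a mixed symbol with two base lower indices, which is zero in the decomposition. Hence every quadratic cross term vanishes and the mixed curvature collapses to the claimed antisymmetrized derivative.

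Finally, for the boundedness estimate I would bound each surviving coordinate quantity: $R^{LC}$ by $\|g_{\mathcal W}\|_{C^2}$; the fiber-block quantities trivially by zero; and the mixed block derivatives $\partial_i\Gamma^{a}_{jb}$ via the explicit formula $\Gamma^{a}_{bi}=\tfrac12\rho^{ac}\partial_i\rho_{bc}$, which yields a pointwise control by $\|\rho\|_{C^2}$ and $\|\rho^{-1}\|_{C^0}$; the latter is finite on $\overline{\mathcal{W}}$ by non-degeneracy and compactness in Assumption~1. The observation-induced formula for the mixed Christoffel additionally brings in $\|h\|_{C^3(\mathcal{W})}$ through the pulled-back Levi-Civita piece on $Y$. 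Compactness of $\overline{\mathcal{W}}$ upgrades these pointwise bounds to a single uniform constant $C$, giving the $L^\infty$ estimate. The conceptual content is routine once the block structure is set up; the only pitfall, and the one worth double-checking, is the vanishing of the quadratic cross terms in the mixed block described above.
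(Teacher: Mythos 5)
Your proposal follows essentially the same route as the paper's proof: coordinate expansion of the curvature, block decomposition by index type, the same two-case argument that kills the quadratic cross terms in the mixed block (relying, exactly as the paper does, on the convention that only $\Gamma^a_{bi}$-type mixed symbols are nonzero while $\Gamma^a_{ik}$ and the reversed-order symbols vanish), and compactness of $\overline{\mathcal{W}}$ together with $\rho\in C^2$, $g_{\mathcal{W}}\in C^2$, $h\in C^3$ for the $L^\infty$ bound. The only difference is cosmetic: the paper additionally substitutes $\Gamma^a_{jb}=\tfrac12\rho^{ac}\partial_j\rho_{bc}$ to write the mixed curvature explicitly as $\tfrac12\bigl[(\partial_i\rho^{ac})(\partial_j\rho_{bc})-(\partial_j\rho^{ac})(\partial_i\rho_{bc})\bigr]$, which you could add in one line.
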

\begin{proof}
\textbf{Detailed analysis of cross terms:}

For term $\Gamma^a_{ic}\Gamma^c_{jb}$:
\begin{itemize}
\item When $c \in \{1,\ldots,k\}$ (fiber index): $\Gamma^c_{jb} = 0$ (fiber connection coefficients are zero)
\item When $c \in \{1,\ldots,2n\}$ (base manifold index): $\Gamma^a_{ic} = 0$ (no connection from base manifold to fiber)
\end{itemize}

Therefore all cross terms are zero.

\textbf{Explicit calculation of mixed curvature:}
\begin{align*}
R^{\nabla}{}^a{}_{bij} &= \partial_i \Gamma^a_{jb} - \partial_j \Gamma^a_{ib}\\
&= \frac{1}{2}\partial_i(\rho^{ac} \partial_j \rho_{bc}) - \frac{1}{2}\partial_j(\rho^{ac} \partial_i \rho_{bc})\\
&= \frac{1}{2}[\rho^{ac}(\partial_i \partial_j \rho_{bc} - \partial_j \partial_i \rho_{bc}) + (\partial_i \rho^{ac})(\partial_j \rho_{bc}) - (\partial_j \rho^{ac})(\partial_i \rho_{bc})]\\
&= \frac{1}{2}[(\partial_i \rho^{ac})(\partial_j \rho_{bc}) - (\partial_j \rho^{ac})(\partial_i \rho_{bc})]
\end{align*}

where we used the equality of mixed partial derivatives $\partial_i \partial_j \rho_{bc} = \partial_j \partial_i \rho_{bc}$.

\textbf{Boundedness:} Since $\mathcal{W}$ is compact and $\rho \in C^2(\overline{\mathcal{W}})$, all partial derivatives are bounded, hence curvature is bounded.
\end{proof}

\begin{definition}[Boundary Compatible Connection]
In boundary layer $\{x : d(x,\partial \mathcal{W}) \leq \epsilon\}$, define boundary compatible connection:

\textbf{Truncated mixed coefficients:}
$$\Gamma^{\text{boundary},a}_{bi} = \chi(d(x,\partial \mathcal{W})) \cdot \Gamma^a_{bi}$$

where $\chi: [0,\infty) \to [0,1]$ is a smooth cutoff function.

\textbf{Curvature estimate:}
$$\|R^{\text{boundary}}\| \leq \frac{C_1}{d(x,\partial \mathcal{W})^{1/2}} + \frac{C_2}{\epsilon^2}$$

\textbf{Integrability condition:}
$$\int_0^{\epsilon} \frac{C_1}{t^{1/2}} dt = 2C_1\sqrt{\epsilon} < \infty$$
\end{definition}

\begin{lemma}[Sobolev Regularity-Corrected Version]
On working region $\mathcal{W}$, connection coefficients satisfy:

\textbf{Interior regularity:}
$$\Gamma^a_{bi} \in H^s(\mathcal{W}_{\text{safe}}) \quad \forall s \geq 0$$

\textbf{Global integrability:}
$$\Gamma^a_{bi} \in L^p(\mathcal{W}) \quad \forall p \geq 1$$

\textbf{Sobolev embedding:}
$$H^s(\mathcal{W}) \hookrightarrow C^{s-\dim \mathcal{W}/2-1}(\overline{\mathcal{W}}) \quad \text{when} \quad s > \dim \mathcal{W}/2 + 1$$
\end{lemma}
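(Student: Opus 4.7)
The plan is to start from the closed-form expression
$\Gamma^{a}_{bi} = \tfrac{1}{2}\rho^{ac}\,\partial_i \rho_{bc}$
already obtained in the Connection Existence theorem, and to reduce each of the three claims to a regularity statement about the fiber metric $\rho$ and its pointwise inverse. Since $\Gamma^{a}_{bc}=0$ and $\Gamma^{k}_{ij}$ is the Levi--Civita connection of a smooth metric on $\mathcal{W}$, only the mixed coefficients $\Gamma^{a}_{bi}$ carry non-trivial regularity information, so the whole argument reduces to controlling products of $\rho^{-1}$ and $\partial\rho$ in the appropriate function spaces.

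For the interior regularity claim I would fix $\mathcal{W}_{\mathrm{safe}}\Subset \mathcal{W}$ with $\mathrm{dist}(\mathcal{W}_{\mathrm{safe}},\partial\mathcal{W})>0$. On this set the smooth-dependence assumption on $\rho_x$ gives $\rho_{bc}\in C^{\infty}(\overline{\mathcal{W}_{\mathrm{safe}}})$, while non-degeneracy combined with Cramer's rule yields $\rho^{ac}\in C^{\infty}(\overline{\mathcal{W}_{\mathrm{safe}}})$; hence $\Gamma^{a}_{bi}\in C^{\infty}(\overline{\mathcal{W}_{\mathrm{safe}}})$ by the product and chain rules. Because $\overline{\mathcal{W}_{\mathrm{safe}}}$ is compact, every derivative is uniformly bounded, so $\|\Gamma^{a}_{bi}\|_{H^{s}(\mathcal{W}_{\mathrm{safe}})}<\infty$ for all $s\geq 0$, which is the first assertion.

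For the global $L^{p}$ claim I would use the boundary regularity hypothesis that $\rho_x$ remains bounded and non-degenerate on $\partial\mathcal{W}$: this forces $\rho^{ac}$ to extend continuously to $\overline{\mathcal{W}}$, and combined with the smoothness of $\rho$ and compactness of $\overline{\mathcal{W}}$ it also bounds $\partial_{i}\rho_{bc}$ uniformly there. Consequently $\Gamma^{a}_{bi}\in C^{0}(\overline{\mathcal{W}})\subset L^{\infty}(\mathcal{W})$. Since $\mathcal{W}$ has finite Lebesgue measure (its closure is compact), the continuous inclusion $L^{\infty}(\mathcal{W})\hookrightarrow L^{p}(\mathcal{W})$ gives the second assertion for every $p\in[1,\infty)$. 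The Sobolev embedding in the third assertion is then a direct invocation of the classical theorem: the boundary regularity hypothesis makes $\partial\mathcal{W}$ a smooth hypersurface, so $\mathcal{W}$ has Lipschitz boundary and the embedding $H^{s}(\mathcal{W})\hookrightarrow C^{s-\dim\mathcal{W}/2-1}(\overline{\mathcal{W}})$ applies whenever the index condition holds.

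The main obstacle I expect is in the global interior-to-boundary bookkeeping. The Sobolev statement is asserted on $\mathcal{W}_{\mathrm{safe}}$ rather than on all of $\mathcal{W}$ precisely because the mixed curvature computation preceding the lemma shows derivatives of $\rho^{-1}\partial\rho$ may fail to stay uniformly bounded up to $\partial\mathcal{W}$ (the boundary-compatible connection already required a cutoff $\chi$ with a $d(x,\partial\mathcal{W})^{-1/2}$ loss). I would therefore be careful to state the $H^{s}$ claim only after choosing $\mathcal{W}_{\mathrm{safe}}$ at positive distance from $\partial\mathcal{W}$, and to derive the global $L^{p}$ claim only through the weaker $C^{0}(\overline{\mathcal{W}})$ bound that does not involve any derivatives. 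This split between the two scales of regularity, interior smooth and global continuous, is what allows all three conclusions to coexist.
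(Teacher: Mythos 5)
Your proposal is correct under the paper's stated hypotheses, and for the global integrability claim it takes a genuinely different (and cleaner) route than the paper. For the interior statement both arguments are the same in spirit (regularity of $\rho$, inversion by non-degeneracy, compactness), but you are more careful: you restrict to $\mathcal{W}_{\mathrm{safe}}\Subset\mathcal{W}$ and use the full smooth dependence of $\rho_x$ to get $C^{\infty}$, whereas the paper works with $\rho\in C^2(\overline{\mathcal{W}})$ and invokes the inclusion $C^1(\overline{\mathcal{W}})\subset H^s(\mathcal{W})$ for all $s$, which is not a valid embedding; your version avoids that slip. For the $L^p$ claim the paper deliberately allows a boundary-layer singularity of order $d(x,\partial\mathcal{W})^{-1/2}$ (inherited from the boundary-compatible cutoff connection) and proves integrability of that singular tail, obtaining convergence only under the stated condition on $p$ (and even there the criterion is off: for a $d^{-p/2}$ singularity along a smooth codimension-one boundary the threshold is $p<2$, not $p/2<\dim\mathcal{W}-1$). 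You instead use boundedness and non-degeneracy of $\rho$ on $\overline{\mathcal{W}}$ to get $\Gamma^a_{bi}\in C^0(\overline{\mathcal{W}})\subset L^{\infty}\subset L^p$ on a finite-measure domain, which actually delivers the lemma's full ``$\forall p\geq 1$'' assertion and is consistent with the paper's own opening claim that $\Gamma^a_{bi}\in C^1(\overline{\mathcal{W}})$. The one caveat, which you already flag, is that if the object intended in the lemma were the boundary-modified connection with genuine $d^{-1/2}$ blow-up, your $C^0(\overline{\mathcal{W}})$ step would not apply and a boundary-layer integrability analysis of the paper's type would be required; under the lemma's hypotheses as written, however, your argument stands, and your invocation of the classical Sobolev embedding for the smooth-boundary domain matches the paper's third assertion.
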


\begin{proof}
Since $\rho \in C^2(\overline{\mathcal{W}})$ and $\mathcal{W}$ is compact:
$$\Gamma^a_{bi} = \frac{1}{2}\rho^{ac} \partial_i \rho_{bc} \in C^1(\overline{\mathcal{W}}) \subset H^s(\mathcal{W}) \quad \forall s$$

Singularity analysis of boundary layer:
$$\int_{\mathcal{W}} |\Gamma^a_{bi}|^p dx \leq \int_{\mathcal{W}_{\text{safe}}} |\Gamma^a_{bi}|^p dx + \int_{\text{boundary}} \frac{C^p}{d(x,\partial \mathcal{W})^{p/2}} dx$$

When $p/2 < \dim(\partial \mathcal{W}) = \dim \mathcal{W} - 1$, the boundary integral converges.

Dimension condition for Sobolev embedding: $s > \frac{\dim \mathcal{W}}{2} + 1 = n + 1$
\end{proof}

\begin{corollary}[Precise Curvature Estimates]
The mixed curvature tensor satisfies:

\textbf{Pointwise estimate:}
$$|R^{\nabla}{}^a{}_{bij}(x)| \leq C\|\rho\|_{C^2}^2 \|\partial^2 \rho\|_{L^\infty}$$

\textbf{Integral estimate:}
$$\|R^\nabla\|_{L^2(\mathcal{W})} \leq C(\text{Vol}(\mathcal{W}))^{1/2} \|\rho\|_{C^2}^2$$

\textbf{Boundary behavior:}
$$|R^{\text{boundary}}| \sim \frac{1}{\epsilon^2} + O\left(\frac{1}{d(x,\partial \mathcal{W})^{1/2}}\right)$$
\end{corollary}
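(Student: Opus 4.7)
The plan is to derive all three estimates by building directly on the explicit mixed-curvature formula
\[
R^{\nabla}{}^a{}_{bij} \;=\; \tfrac{1}{2}\bigl[(\partial_i \rho^{ac})(\partial_j \rho_{bc}) - (\partial_j \rho^{ac})(\partial_i \rho_{bc})\bigr]
\]
established in the preceding theorem, together with the truncated boundary connection introduced in the ``Boundary Compatible Connection'' definition. No new geometric input is needed; the work is essentially differential calculus with uniform bounds on $\mathcal{W}$.

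For the pointwise estimate, first I would replace $\partial_i \rho^{ac}$ by $-\rho^{ae}(\partial_i \rho_{ef})\rho^{fc}$ using differentiation of $\rho^{ae}\rho_{eb}=\delta^a_b$. This rewrites each factor in the curvature formula in terms of $\rho^{-1}$ and first derivatives of $\rho$. A direct Cauchy--Schwarz bound in the fiber indices then produces a factor $\|\rho^{-1}\|_{L^\infty}^2 \|\partial \rho\|_{L^\infty}^2$, which is absorbed into $\|\rho\|_{C^2}^2$ because $\|\rho^{-1}\|_{L^\infty}$ is controlled by the uniform non-degeneracy of $\rho$ stated in Assumption~1 and $\|\partial \rho\|_{L^\infty}\le \|\rho\|_{C^2}$. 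The extra factor $\|\partial^2\rho\|_{L^\infty}$ appearing in the statement is picked up by bounding the residual term that would arise if one instead differentiated the Christoffel representation $\Gamma^a_{bi}=\tfrac12\rho^{ac}\partial_i\rho_{bc}$ once more, which is the form produced by the curvature definition before the second partials cancel; retaining that intermediate estimate yields the stated bound.

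For the integral estimate, I would integrate the pointwise bound over $\mathcal{W}$ and use the compactness of $\overline{\mathcal{W}}$ (Assumption~1(2)) to pull the uniform constants out. One application of H\"older's inequality on a bounded domain gives
\[
\|R^\nabla\|_{L^2(\mathcal{W})} \;\le\; \bigl(\mathrm{Vol}(\mathcal{W})\bigr)^{1/2}\, \|R^\nabla\|_{L^\infty(\mathcal{W})},
\]
and the $L^\infty$ bound from step~1 completes the claim. No boundary subtleties enter here because the interior curvature formula is itself regular on $\mathcal{W}_{\text{safe}}$, and the bound extends by continuity to $\overline{\mathcal{W}}$.

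The boundary estimate is the main obstacle, and is where I expect the work to concentrate. For the truncated connection $\Gamma^{\text{boundary},a}_{bi}=\chi(d(x,\partial\mathcal{W}))\Gamma^a_{bi}$, the curvature carries two types of terms: those inherited from the interior curvature (multiplied by $\chi^2$), and cross terms involving $\partial\chi$ coupled to $\Gamma^a_{bi}$, plus a term with $\partial^2\chi$ from the commutator of base-manifold derivatives. I would choose the standard cutoff profile $\chi'=O(1/\epsilon)$ and $\chi''=O(1/\epsilon^2)$ supported in the boundary collar, so that the $\partial^2\chi$ term produces the $1/\epsilon^2$ leading contribution. The remaining $\partial\chi\cdot\Gamma^a_{bi}$ term inherits the mild $d(x,\partial\mathcal{W})^{-1/2}$ blow-up of $\Gamma^a_{bi}$ coming from the boundary behaviour $\delta(x)\to\Delta$ in the fiber metric, giving the $O(d(x,\partial\mathcal{W})^{-1/2})$ remainder. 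The delicate point is verifying that these are the only divergent contributions and that the interior piece remains bounded inside the collar; this follows from Lemma~5 (Sobolev regularity) and the integrability condition $\int_0^\epsilon t^{-1/2}\,dt=2\sqrt{\epsilon}<\infty$, which ensures the estimate is consistent with the $L^2$ bound in the second part.
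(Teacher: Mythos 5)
Your overall route is the one the paper itself implies (the corollary is stated without an explicit proof and is meant to follow from the explicit formula $R^{\nabla}{}^a{}_{bij}=\tfrac12[(\partial_i\rho^{ac})(\partial_j\rho_{bc})-(\partial_j\rho^{ac})(\partial_i\rho_{bc})]$, H\"older on the bounded domain, and the boundary-compatible connection), and your $L^2$ step is correct as written. But there are two genuine gaps. First, the pointwise estimate: your manipulation $\partial_i\rho^{ac}=-\rho^{ae}(\partial_i\rho_{ef})\rho^{fc}$ plus Cauchy--Schwarz delivers a bound of the form $C\,\|\rho^{-1}\|_{L^\infty}^2\|\partial\rho\|_{L^\infty}^2$, i.e.\ a bound by $C(\lambda_{\min})\|\rho\|_{C^2}^2$ involving only \emph{first} derivatives; the claim that ``retaining the intermediate estimate'' (bounding $\partial_i\Gamma^a_{jb}-\partial_j\Gamma^a_{ib}$ before the second partials cancel) ``yields the stated bound'' is not right, because that intermediate bound is a \emph{sum} $C\bigl(\|\rho^{-1}\|\,\|\partial^2\rho\|_{L^\infty}+\|\rho^{-1}\|^2\|\partial\rho\|_{L^\infty}^2\bigr)$, not the multiplicative form $C\|\rho\|_{C^2}^2\|\partial^2\rho\|_{L^\infty}$ in the statement. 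Indeed the product form cannot follow from the cancelled formula at all: for $\rho(x)=I+x^1A+x^2B$ with non-commuting symmetric $A,B$ one has $\partial^2\rho\equiv0$ while $R^\nabla\neq0$, so any honest proof can only deliver the $\|\rho\|_{C^2}$-type bound (which, fortunately, is all your $L^2$ step actually uses). You should say explicitly that this is what your argument proves, rather than assert the stated product.

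Second, the boundary behavior. Your derivation of the $1/\epsilon^2$ term from $\chi''$ is fine, but the $O\bigl(d(x,\partial\mathcal{W})^{-1/2}\bigr)$ term is attributed to a $d^{-1/2}$ blow-up of $\Gamma^a_{bi}$ ``coming from $\delta(x)\to\Delta$.'' Under the paper's standing hypotheses this blow-up does not exist: $\rho$ is smooth, bounded and non-degenerate on $\overline{\mathcal{W}}$ and $\delta\in C^\infty(\overline{\mathcal{W}})$ with bounded gradient, so $\Gamma^a_{bi}=\tfrac12\rho^{ac}\partial_i\rho_{bc}$ is bounded up to $\partial\mathcal{W}$ and the cross term $\partial\chi\cdot\Gamma$ contributes only $O(1/\epsilon)$. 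The paper never derives the $d^{-1/2}$ singularity either---it simply postulates the estimate $C_1 d^{-1/2}+C_2\epsilon^{-2}$ in the definition of the boundary-compatible connection (and echoes it in the Sobolev lemma)---so to close this part you must either cite that definition as the source of the $d^{-1/2}$ term (making the corollary's boundary line a restatement, not a derivation) or add a hypothesis under which $\Gamma^a_{bi}$ genuinely degenerates like $d^{-1/2}$ near $\partial\mathcal{W}$; the mechanism you propose does not produce it. Also, minor: the uniform non-degeneracy of $\rho$ you invoke is stated in the fiber-metric clause of Definition~1, not in Assumption~1.
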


\subsection{Geometric Representation of Observation Constraints}

\begin{definition}[Additional Safety Constraint Potential]
The additional safety constraint potential $\Phi: E \to \mathbb{R}$ defines safety domain $\mathcal{S} = \{e \in E : \Phi(e) \geq 0\}$ on basic constraint domain $E$, satisfying:
\begin{enumerate}[label=(\arabic*)]
\item \textbf{Regularity:} $d\Phi \neq 0$ on $\partial\mathcal{S} = \Phi^{-1}(0)$
\item \textbf{Radial eventual monotonicity:} For each $x \in M$ and zero observation deviation $\xi_0 = 0 \in T^*_{h(x)}Y$, there exists $r_0(x) > 0$ such that the function
   $$t \mapsto \Phi(x, \gamma_\nu(t))$$
   is monotonically decreasing for $t \geq r_0(x)$ along geodesic $\gamma_\nu(t) = \xi_0 + t\nu + O(t^2)$ with respect to metric $\rho_x$, where $\|\nu\|_{\rho_x} = 1$
\item \textbf{Stratified properness:} For any $c \geq 0$, the level set $\Phi^{-1}((-\infty, c])$ can be decomposed as:
   $$\Phi^{-1}((-\infty, c]) = K_c \cup T_c$$
   where $K_c$ is a compact core and $T_c \subset \{(x, \xi) : \|\xi\|_{\rho_x} \geq R_c(x)\}$ is a bounded tail
\end{enumerate}
\end{definition}

\begin{theorem}[Complete Necessary and Sufficient Conditions for Properness]
Let observation constraint potential $\Phi$ satisfy radial eventual monotonicity. Then stratified properness holds if and only if:
\begin{enumerate}[label=(C\arabic*)]
\item \textbf{Global uncertainty bound:} $\Delta = \sup_{x \in M} \delta(x) < \infty$
\item \textbf{Quadratic lower bound of potential function:} There exist $\alpha > 0$ and $R_{\min} > 0$ such that
   $$\Phi(x, \xi) \geq -\alpha \|\xi\|_{\rho_x}^2, \quad \forall (x, \xi) \in E, \|\xi\|_{\rho_x} \geq R_{\min}$$
\item \textbf{Uniform continuity of uncertainty function:} For any compact set $K \subset M$,
   $$|\delta(x) - \delta(y)| \leq L_K \|x - y\|, \quad \forall x, y \in K$$
\item \textbf{Asymptotic control condition:} One of the following conditions holds:
   \begin{enumerate}[label=(C4\alph*)]
   \item \textbf{Compact case:} $M$ is a compact manifold
   \item \textbf{Asymptotic decay:} There exist reference point $x_0 \in M$ and constants $C, \beta > 0$ such that
   $$\delta(x) \leq \frac{C}{(1 + d(x, x_0))^\beta}, \quad \forall x \in M$$
   where $d(\cdot, \cdot)$ is the Riemannian distance on $M$
   \item \textbf{Controlled growth:} There exists compact set sequence $\{K_n\}_{n=1}^\infty$ such that $M = \bigcup_{n=1}^\infty K_n$, $K_n \subset K_{n+1}$, and
   $$\lim_{n \to \infty} \sup_{x \in K_n \setminus K_{n-1}} \delta(x) = 0$$
   \end{enumerate}
\end{enumerate}
\end{theorem}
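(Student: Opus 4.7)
The plan is to prove the biconditional by handling the two implications separately. For sufficiency, I fix $c \geq 0$ and explicitly construct the decomposition $K_c \cup T_c$; for necessity, I assume stratified properness holds for all $c$ and, for each of (C1)--(C4) that fails, build an escape sequence contradicting either the compact-core or the bounded-tail structure.

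For the sufficiency direction, let $S_c := \Phi^{-1}((-\infty,c])$, set $R_c(x) := \max\{R_{\min}, \sqrt{(c+1)/\alpha}\}$, and take $T_c := S_c \cap \{(x,\xi) : \|\xi\|_{\rho_x} \geq R_c(x)\}$, $K_c := S_c \setminus T_c$. Condition (C2) together with radial eventual monotonicity forces the tail to be the region where $\Phi$ is governed by the quadratic lower bound, so $T_c$ fits the bounded-tail description by construction. Condition (C1) bounds fibers uniformly by $\Delta$, so the fiber direction of $K_c$ is automatically relatively compact. The remaining task is to control $\pi(K_c) \subset M$: case (C4a) is immediate; case (C4b) uses the decay $\delta(x) \leq C/(1+d(x,x_0))^\beta$ combined with radial monotonicity to confine points with $\Phi \leq c$ and small fiber norm into a Riemannian ball around $x_0$; case (C4c) uses the exhaustion $K_n$ together with $\sup_{K_n \setminus K_{n-1}} \delta(x) \to 0$ to force the base projection to live in some finite $K_N$. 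Finally, (C3) ensures $K_c$ is closed: uniform Lipschitz behavior of $\delta$ on compacts prevents pathological sequences in $K_c$ from accumulating on points where the fiber structure degenerates.

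For necessity, I argue by contrapositive using explicit escape sequences. If $\Delta = \infty$, select $x_n$ with $\delta(x_n) \to \infty$; by radial eventual monotonicity one can pick $\xi_n$ in the fiber over $x_n$ with $\Phi(x_n,\xi_n) \leq 0$ and $\|\xi_n\|_{\rho_{x_n}}$ growing without bound yet with no uniform $R_c(x_n)$ available, breaking the tail control. If the quadratic lower bound fails, a sequence along which $\Phi(x_n,\xi_n) \to -\infty$ faster than $-\alpha\|\xi_n\|^2$ produces level-set points that cannot be absorbed into $T_c$ (the tail hypothesis forces $\Phi \gtrsim -\alpha\|\xi\|^2$) yet escape any compact $K_c$. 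Non-uniform Lipschitz behavior of $\delta$ yields sequences whose limits fall outside the fiber radii, producing a non-closed candidate core. Violation of (C4) in any of its three forms supplies a base-manifold escape $x_n$ with $\xi_n = 0$ and $\Phi(x_n,0) \leq c$; since $\|\xi_n\| = 0 < R_c(x_n)$, these points must lie in $K_c$ but cannot form a compact set.

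The hardest part will be the interplay between (C2) and radial eventual monotonicity in the sufficiency argument: the quadratic lower bound alone does not force a level set to be compact; it must be combined with the decreasing behavior of $\Phi$ along radial rays to produce the geometric carving of tail versus core, and this coupling must remain uniform as $x$ varies over $M$. On the necessity side, the main technical nuisance is constructing counterexamples that violate exactly one of (C1)--(C4) while preserving radial eventual monotonicity and the other three conditions, particularly for (C2) and (C3), where the targeted obstruction is easy to conflate with a failure of regularity elsewhere.
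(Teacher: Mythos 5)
Your proposal follows essentially the same route as the paper's own proof: sufficiency by explicitly carving each level set $\Phi^{-1}((-\infty,c])$ into a fiber-norm-bounded core and a tail controlled by (C2) together with radial eventual monotonicity, with the case split on (C4a)--(C4c) used to confine the base projection (compactness immediate, decay of $\delta$ forcing the far region to contribute nothing, exhaustion by $K_n$ in the controlled-growth case), and necessity by the same four escape-sequence constructions (points with $\delta(x_n)\to\infty$ and $\Phi\leq 0$, super-quadratic negativity of $\Phi$, failure of the local Lipschitz bound breaking the compact core, and a base-manifold escape when (C4) fails). The decomposition and the contradiction arguments match the paper's in both structure and level of detail, so the proposal is correct relative to the paper's standard and adds nothing that conflicts with it.
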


\begin{proof}[Complete proof]
We prove sufficiency and necessity separately.

\textbf{Step 1: Proof of sufficiency}

Suppose conditions (C1)-(C3) and one of (C4a), (C4b), or (C4c) hold. For any $c \geq 0$, consider level set $\Phi^{-1}((-\infty, c])$.

By radial eventual monotonicity, for each $x \in M$, there exists $R_c(x) > 0$ such that:
$$\Phi^{-1}((-\infty, c]) \cap \pi^{-1}(x) \subset \{\xi \in T^*_{h(x)}Y : \|\xi\|_{\rho_x} \leq R_c(x)\}$$

where $R_c(x)$ satisfies: when $\|\xi\|_{\rho_x} \geq R_c(x)$, $\Phi(x, \xi) \geq c$.

Using quadratic lower bound condition (C2): Let $\|\xi\|_{\rho_x} = r \geq R_{\min}$, then
$$\Phi(x, \xi) \geq -\alpha r^2$$

For $\Phi(x, \xi) \geq c$, we need $-\alpha r^2 \geq c$, i.e., $r \leq \sqrt{-c/\alpha}$ (when $c \leq 0$).

When $c > 0$, by radial eventual monotonicity and continuity, $R_c(x)$ is still bounded.

Therefore:
$$R_c(x) \leq \max\left\{\Delta, \sqrt{\frac{|c| + \alpha R_{\min}^2}{\alpha}}\right\} =: R_c^{\max}$$

Now analyze according to different cases of condition (C4):

\textbf{Case (C4a) - Compact manifold:}
When $M$ is compact, $\delta$ achieves maximum value $\Delta$ on $M$, and $\Phi^{-1}((-\infty, c])$ is automatically compact. Define:
$$K_c = \Phi^{-1}((-\infty, c]), \quad T_c = \emptyset$$

\textbf{Case (C4b) - Asymptotic decay:}
Choose $R > 0$ large enough so that for $d(x, x_0) \geq R$ we have $\delta(x) \leq \epsilon$, where $\epsilon > 0$ is to be determined.

Define:
\begin{align*}
K_c &= \{(x, \xi) \in \Phi^{-1}((-\infty, c]) : d(x, x_0) \leq R + 1\} \\
T_c &= \{(x, \xi) \in \Phi^{-1}((-\infty, c]) : d(x, x_0) > R + 1, \|\xi\|_{\rho_x} \geq \epsilon\}
\end{align*}

For $(x, \xi) \in T_c$: Since $d(x, x_0) > R + 1$, we have $\delta(x) \leq \epsilon$, but $\|\xi\|_{\rho_x} \geq \epsilon$, which means $(x, \xi) \notin E$, contradiction. Therefore $T_c = \emptyset$, $\Phi^{-1}((-\infty, c]) = K_c$ is a compact set.

\textbf{Case (C4c) - Controlled growth:}
Since $\sup_{x \in K_n \setminus K_{n-1}} \delta(x) \to 0$, for any $\epsilon > 0$, there exists $N$ such that for $n \geq N$, $\sup_{x \in K_n \setminus K_{n-1}} \delta(x) < \epsilon$.

Define:
\begin{align*}
K_c &= \{(x, \xi) \in \Phi^{-1}((-\infty, c]) : x \in K_N, \|\xi\|_{\rho_x} \leq R_c^{\max}\} \\
T_c &= \Phi^{-1}((-\infty, c]) \setminus K_c
\end{align*}

Similar to the analysis in case (C4b), when $\epsilon$ is sufficiently small, elements in $T_c$ lead to contradiction, therefore $T_c$ is bounded.

\textbf{Step 2: Proof of necessity}

Assume stratified properness holds.

\textbf{Necessity of condition (C1):}
If $\sup_{x \in M} \delta(x) = \infty$, then there exists sequence $x_n \in M$ such that $\delta(x_n) \to \infty$. For each $n$, take $\xi_n$ in fiber $\pi^{-1}(x_n)$ such that $\|\xi_n\|_{\rho_{x_n}} = \delta(x_n)/2$ and $\Phi(x_n, \xi_n) < 0$ (this is always possible by appropriate choice of constraint potential).

Then $(x_n, \xi_n) \in \Phi^{-1}((-\infty, 0])$, but $\|\xi_n\| \to \infty$, contradicting the boundedness of $T_0$ in stratified properness.

\textbf{Necessity of condition (C2):}
If the required $\alpha, R_{\min}$ do not exist, then there exists sequence $(x_n, \xi_n)$ such that $\|\xi_n\|_{\rho_{x_n}} \to \infty$ and
$$\frac{\Phi(x_n, \xi_n)}{\|\xi_n\|_{\rho_{x_n}}^2} \to -\infty$$

Take $c_n = \Phi(x_n, \xi_n) \to -\infty$, then level set $\Phi^{-1}((-\infty, c_n])$ contains $(x_n, \xi_n)$, but $\|\xi_n\| \to \infty$, contradicting properness.

\textbf{Necessity of condition (C3):}
If $\delta$ does not satisfy local Lipschitz property, on some compact set $K$ there exists sequence $(x_n, y_n) \in K \times K$ such that
$$\frac{|\delta(x_n) - \delta(y_n)|}{\|x_n - y_n\|} \to \infty$$

This leads to discontinuity in fiber bundle structure, destroying the construction of compact core $K_c$ in stratified properness.

\textbf{Necessity of condition (C4):}
If $M$ is non-compact and does not satisfy (C4b) or (C4c), then there exists sequence $x_n \in M$ escaping to infinity and $\delta(x_n)$ does not decay. Combined with condition (C1), this leads to uncontrollable non-compact parts of level sets, contradicting stratified properness.
\end{proof}

\begin{remark}[Physical Meaning of Asymptotic Conditions]
The asymptotic decay in condition (C4b) reflects physical limitations of actual sensor systems:
\begin{enumerate}
\item $\beta = 1$: Linear decay of signal strength (such as laser ranging)
\item $\beta = 2$: Power decay law (such as wireless sensors)
\item $\beta > 2$: Rapid decay systems (such as high-frequency ultrasound)
\end{enumerate}
The controlled growth pattern in condition (C4c) is applicable to hierarchical control systems where remote sensors can tolerate larger uncertainties.
\end{remark}

\section{Constrained Hamiltonian Systems on Fiber Bundles}

\subsection{Fiberization of Symplectic Structure}\label{subsec:sym_fiber}

Local observation-induced fiber bundles carry natural symplectic structure, extending classical Hamiltonian geometry to working region settings.

\begin{definition}[Local Flat Adaptive Connection]
An observation-adaptive connection $\nabla$ on working region $\mathcal{W}$ is called flat if its curvature tensor satisfies:
\begin{enumerate}
\item \textbf{Vanishing vertical curvature:} $R^{\nabla}(X,Y) = 0$, $\forall X,Y \in VE_{\mathcal{W}}$
\item \textbf{Bounded mixed curvature:} $\|R^{\nabla}(H,V)\| \leq C\delta(x)$, where $C > 0$ is a bounded constant on $\mathcal{W}$
\item \textbf{Controlled horizontal curvature:} $R^{\nabla}(X,Y) = \pi^*R^{\mathcal{W}}(X,Y)$, $\forall X,Y \in HE_{\mathcal{W}}$
\item \textbf{Boundary behavior:} As $x \to \partial \mathcal{W}$, $\nabla$ smoothly transitions to degenerate connection
\end{enumerate}
where $E_{\mathcal{W}}$ is the fiber bundle restricted to working region, and $R^{\mathcal{W}}$ is the curvature tensor of base manifold on $\mathcal{W}$.
\end{definition}

\begin{definition}[Standard Cotangent Structure of Local Observation Fibers]
On each fiber $T^*_{h(x)}Y$ within working region $\mathcal{W}$, where $x \in \mathcal{W}$ and $Y$ is the $k$-dimensional observation manifold, adopt \textbf{standard cotangent bundle coordinates} $(\xi_1,\ldots,\xi_k,\pi_1,\ldots,\pi_k)$.

\textbf{Base coordinates (observation deviation coordinates):}
Coordinates $\xi_a$ represent the $a$-th component of observation deviation, defined as $\xi_a = y^a_{\text{measured}} - y^a_{\text{true}}$, geometrically corresponding to deviation in the $a$-th local coordinate direction of observation space $Y$ at point $h(x)$. These coordinates are subject to physical constraint $\|(\xi_1,\ldots,\xi_k)\|_{\rho_x} \leq \delta(x)$, where $\delta(x) \in [\delta_{\min}, \Delta]$ is bounded on $\mathcal{W}$. Mathematically, $\xi_a$ constitutes the "position" component of standard cotangent bundle coordinate system on $T^*_{h(x)}Y$. Since observation map $h: \mathcal{W} \to Y$ is a diffeomorphism onto image, coordinates $\xi_a$ are well-defined throughout working region $\mathcal{W}$ and maintain smoothness.

\textbf{Dual coordinates:}
Coordinates $\pi_a$ are standard cotangent coordinates paired with $\xi_a$, satisfying standard symplectic relation $\{\xi_a, \pi_b\} = \delta_a^b$ throughout $\mathcal{W}$. Geometrically, $\pi_a$ represents standard momentum coordinates in $T^*_{h(x)}Y$, maintaining smoothness near boundary $\partial \mathcal{W}$, compatible with boundary growth behavior $\delta(x) \to \Delta$.

These coordinates constitute standard Darboux coordinate system $(\xi_a, \pi_a)_{a=1}^k$ on $T^*_{h(x)}Y$, uniformly well-defined throughout working region $\mathcal{W}$.

\begin{remark}[Global Consistency of Local Coordinates]
Due to connectedness of $\mathcal{W}$ and diffeomorphism property of $h$, local Darboux coordinates can be globally coordinated on $\mathcal{W}$ through appropriate coordinate transformations, ensuring consistency of fiber bundle structure.
\end{remark}
\end{definition}

\begin{definition}[Localized Unified Index Convention]
Adopt the following standard index convention on working region $\mathcal{W}$:

\textbf{Base manifold indices:}
Working region $\mathcal{W} \subset M$ uses general coordinate indices $i,j,\ell \in \{1,\ldots,2n\}$, as well as Darboux coordinate indices $\alpha,\beta \in \{1,\ldots,n\}$ corresponding to $(q^\alpha, p_\alpha)$. This dual index system allows flexible conversion between general coordinates and canonical coordinates when needed.

\textbf{Fiber indices:}
Fiber coordinates $(\xi_a, \pi_a)$ use unified indices $a,b,c \in \{1,\ldots,k\}$, where $k = \dim Y$ is the dimension of observation space. This convention ensures consistency of geometric structure in fiber directions throughout working region.

\textbf{Boundary handling parameters:}
Boundary layer thickness parameter $\epsilon$ and safety distance $\epsilon_{\text{safe}} > 0$ control boundary behavior, distance from point $x \in \mathcal{W}$ to boundary $\partial \mathcal{W}$ is denoted as $d(x,\partial \mathcal{W})$. \textbf{Einstein summation convention} automatically sums over repeated upper-lower indices within corresponding index sets.
\end{definition}

\begin{definition}[Local Fiber Basis and Standard Symplectic Pairing]
On each fiber $T^*_{h(x)}Y$ within working region $\mathcal{W}$, use standard cotangent coordinates $(\xi_a, \pi_a)_{a=1}^k$ to construct complete geometric structure.

\textbf{Vector field basis:}
Tangent vector basis includes fiber base coordinate directions $V_a = \frac{\partial}{\partial \xi_a}$, dual coordinate directions $U_a = \frac{\partial}{\partial \pi_a}$, and horizontal directions $H_i = \frac{\partial}{\partial x^i}$ corresponding to base manifold directions within working region. These vector fields are well-defined and maintain smoothness within $\mathcal{W}$.

\textbf{Differential form basis:}
Cotangent vector basis consists of base coordinate duals $e^a = d\xi_a$, dual coordinate duals $\epsilon^a = d\pi_a$, and horizontal duals $\eta^i = dx^i$. This basis system provides complete cotangent space description within working region $\mathcal{W}$.

\textbf{Local fiber metric structure:}
Base coordinate metric $\rho_{ab}(x) = \rho(V_a, V_b)$ and its inverse $\rho^{ab}(x)$ are defined only on base coordinate directions, metric structure of dual coordinates is naturally determined by symplectic geometry. Metric maintains continuity on closed working region $\overline{\mathcal{W}}$, remaining non-degenerate at boundary $\partial \mathcal{W}$. Global boundedness condition $\sup_{x \in \overline{\mathcal{W}}} \|\rho_{ab}(x)\| < \infty$ and positivity condition $\inf_{x \in \overline{\mathcal{W}}} \lambda_{\min}(\rho_{ab}(x)) > 0$ ensure stability of geometric structure.

\textbf{Standard symplectic pairing (local version):}
\begin{align*}
\omega_{\text{fib}}(V_a, U_b) &= \delta_a^b \quad \text{on } \mathcal{W}\\
\omega_{\text{fib}}(V_a, V_b) &= 0 \quad \text{on } \mathcal{W}\\
\omega_{\text{fib}}(U_a, U_b) &= 0 \quad \text{on } \mathcal{W}
\end{align*}

\textbf{Boundary compatibility:} Symplectic pairing continuously extends at $\partial \mathcal{W}$, compatible with degenerate treatment at boundary.
\end{definition}

\begin{proposition}[Geometric Connection between Local Observation Fibers and Base Manifold]
Observation fibers $T^*_{h(x)}Y$ within working region $\mathcal{W}$ are geometrically connected to base manifold $T^*\mathcal{W}$ through geometric structure of observation map $h: \mathcal{W} \to Y$:

\textbf{Diffeomorphism properties within working region:}
Within working region $\mathcal{W}$, observation map $h: \mathcal{W} \to Y$ exhibits good geometric properties. The Jacobian matrix satisfies $\text{rank}(dh) = k$ constantly throughout $\mathcal{W}$, ensuring absence of singular points. This makes $h: \mathcal{W} \to h(\mathcal{W})$ a diffeomorphism onto image. Furthermore, for any $x \in \mathcal{W}$, there exists neighborhood $U_x \subset \mathcal{W}$ such that local restriction $h|_{U_x}: U_x \to h(U_x)$ is a diffeomorphism. Correspondingly, pullback operation $h^*: \Omega^*(h(\mathcal{W})) \to \Omega^*(\mathcal{W})$ is well-defined and isomorphic, ensuring correct transfer of differential forms under observation map. For boundary handling, $h$ can be continuously extended to closed working region $\overline{\mathcal{W}}$, maintaining continuity on boundary $\partial \mathcal{W}$.

\textbf{Hierarchical distinction and connection of coordinate systems:}
The system has two essentially different coordinate systems: base manifold coordinates $(q^\alpha, p_\alpha) \in T^*\mathcal{W}$ describing $2n$-dimensional working region phase space, and observation fiber coordinates $(\xi_a, \pi_a) \in T^*_{h(x)}Y$ corresponding to $2k$-dimensional fiber space. These two coordinate systems are connected through fiber bundle geometric structure, but \textbf{no direct coordinate transformation relation exists}, which is precisely the essence of fiber bundle theory. The advantage of working region setting is that within $\mathcal{W}$, all coordinate transformations maintain smoothness and invertibility, effectively avoiding singular point problems.

\textbf{Geometric connection principle:}
Dynamics on observation fibers couple with base manifold dynamics through horizontal-vertical decomposition of fiber bundle. Horizontal distribution $H E_{\mathcal{W}}$ is precisely defined by observation-adaptive connection $\nabla$, describing how base manifold motion influences fiber geometry. Vertical distribution $V E_{\mathcal{W}}$ corresponds to intrinsic dynamic evolution in fiber directions, encoding temporal evolution laws of observation uncertainty. This decomposition ensures overall Hamiltonian structure maintains complete consistency within working region $\mathcal{W}$, while smooth degeneration mechanism at boundaries guarantees continuity of geometric structure near boundaries, providing necessary safety protection for the system.

\begin{remark}[Geometric Advantages of Working Region Setting]
Compared to global theory, working region setting provides significant advantages. \textbf{Geometric completeness} is guaranteed by avoiding singular points, preventing destruction of geometric structure. \textbf{Analytical controllability} is manifested in boundedness of all geometric quantities on compact domain $\overline{\mathcal{W}}$. \textbf{Numerical stability} is ensured through good behavior of coordinate transformations and metrics within $\mathcal{W}$. Most importantly, this setting completely conforms to working region limitations of actual systems, with clear \textbf{physical reasonableness}.
\end{remark}
\end{proposition}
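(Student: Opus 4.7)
The plan is to verify the three clauses of the proposition---diffeomorphism properties of $h$, hierarchical separation of coordinate systems, and horizontal-vertical coupling---by direct appeal to Assumption 1, Definition 1, and the observation-adaptive connection machinery developed in Section~\ref{subsec:local_connection}. Most clauses unpack almost tautologically from the hypotheses; the genuine work lies in rigorously constructing the horizontal-vertical decomposition and controlling its boundary behavior.

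First, I would establish the diffeomorphism clause. The constant-rank condition $\operatorname{rank}(dh)=k$ on $\mathcal{W}$ together with $\dim Y=k$ makes $dh_x$ surjective at every $x$, so $h$ is a submersion. The Constant Rank Theorem then produces, at each $x\in\mathcal{W}$, a coordinate chart in which $h$ is a coordinate projection, yielding the local diffeomorphism property of $h|_{U_x}\colon U_x\to h(U_x)$. The global statement $h\colon\mathcal{W}\to h(\mathcal{W})$ is a diffeomorphism onto image is already asserted in Definition 1(3); combined with the local inversions just produced, this gives a well-defined smooth inverse $h^{-1}\colon h(\mathcal{W})\to\mathcal{W}$. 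The pullback $h^*$ is then an isomorphism of the de~Rham complexes with inverse $(h^{-1})^*$, and continuous boundary extension is immediate from Assumption~1(3).

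Second, for the hierarchical clause, I would treat this as a dimension count combined with a structural observation rather than a deep theorem. The base cotangent bundle $T^*\mathcal{W}$ has dimension $4n$ while an individual fiber $T^*_{h(x)}Y$ has dimension $2k$; since $h$ is in general neither surjective onto $Y$ nor does $k$ equal $2n$, no chart on $T^*\mathcal{W}$ can serve as a chart on a fiber. The correct mathematical content is that $E_{\mathcal{W}}$ is an honest bundle over $\mathcal{W}$ and not a product, so the only passage between the canonical coordinates $(q^\alpha,p_\alpha)$ on $T^*\mathcal{W}$ and the fiber coordinates $(\xi_a,\pi_a)$ is mediated by the projection $\pi$ and the connection-dependent horizontal lift.

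Third, for the horizontal-vertical decomposition, I would invoke Theorem 3 (Connection Existence) to obtain the observation-adaptive connection $\nabla$. The vertical distribution $VE_{\mathcal{W}}=\ker(d\pi)$ is intrinsic to the projection, and $\nabla$ assigns a complementary horizontal distribution $HE_{\mathcal{W}}$ pointwise, producing a smooth splitting $TE_{\mathcal{W}}=HE_{\mathcal{W}}\oplus VE_{\mathcal{W}}$; the coupling between base and fiber dynamics is measured by the mixed coefficients $\Gamma^a_{bi}=\tfrac{1}{2}\rho^{ac}\partial_i\rho_{bc}$ from the local connection formulas. The hard part will be propagating this splitting up to $\partial\mathcal{W}$: by the Corollary on precise curvature estimates, the mixed curvature blows up like $d(x,\partial\mathcal{W})^{-1/2}$, so $HE_{\mathcal{W}}$ a priori degenerates at the boundary. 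I would handle this by switching to the Boundary Compatible Connection of Definition 5, whose cutoff $\chi$ softens the singularity; the integrability bound $\int_0^{\epsilon}C_1 t^{-1/2}\,dt=2C_1\sqrt{\epsilon}$ then guarantees the splitting extends continuously (though not smoothly) to $\overline{\mathcal{W}}$, which is exactly the smooth degeneration mechanism asserted in the closing remark.
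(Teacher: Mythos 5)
Your overall strategy---unpacking the proposition from Assumption 1, Definition 1, and the connection machinery---matches how the paper treats this statement: the paper offers no proof at all, since every clause is either a restatement of its hypotheses or a description of the splitting furnished by the observation-adaptive connection (Theorem 3). However, your derivation of the first clause contains a step that fails as written. From $\operatorname{rank}(dh)=k=\dim Y$ you correctly get that $h$ is a submersion and, by the Constant Rank Theorem, locally a coordinate projection; but a projection of a $2n$-dimensional chart onto $\mathbb{R}^k$ with $k<2n$ is not injective on any neighborhood, so this does not yield that $h|_{U_x}\colon U_x\to h(U_x)$ is a diffeomorphism, nor that $h^*\colon\Omega^*(h(\mathcal{W}))\to\Omega^*(\mathcal{W})$ is an isomorphism (for a mere submersion $h^*$ is injective but not surjective). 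These properties hold in the paper's framework only because Definition 1(3) postulates them outright---and they tacitly force $k=\dim\mathcal{W}$, a tension the paper never resolves, since its examples have $k<2n$. The correct move for this clause is the one you already use for the global statement: cite the definition rather than derive the local diffeomorphism from the rank condition.

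Two smaller misreadings. First, $(q^\alpha,p_\alpha)$, $\alpha=1,\dots,n$, are Darboux coordinates on the $2n$-dimensional phase space $\mathcal{W}$ itself (the proposition's notation $T^*\mathcal{W}$ is loose), so the relevant comparison is $2n$ versus $2k$, not $4n$ versus $2k$; this does not damage your structural point that base and fiber coordinates communicate only through $\pi$ and the connection-dependent horizontal lift, which is exactly the intended content. Second, the $d(x,\partial\mathcal{W})^{-1/2}$ blow-up in the Corollary on precise curvature estimates pertains to the truncated boundary-compatible connection of Definition 5, not to the original $\nabla$: the paper bounds $\|R^{\nabla}\|$ on $\mathcal{W}$ and shows $\Gamma^a_{bi}\in C^1(\overline{\mathcal{W}})$ because $\rho\in C^2(\overline{\mathcal{W}})$, so the horizontal distribution does not a priori degenerate at $\partial\mathcal{W}$ for the reason you give. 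Your use of the cutoff connection and the $2C_1\sqrt{\epsilon}$ integrability bound is a reasonable way to phrase the ``smooth degeneration'' clause, but note that the paper itself only asserts this boundary mechanism descriptively and explicitly defers its rigorous justification (the singular perturbation analysis) to future work, so nothing stronger is required of you there than of the paper.
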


\begin{theorem}[Topological Conditions for Existence of Local Fiber Bundle Symplectic Structure]
Let $\pi : E_{\mathcal{W}} \to \mathcal{W}$ be a local observation-induced fiber bundle. There exists a symplectic structure on fiber bundle $E_{\mathcal{W}}$ if and only if the following conditions are satisfied:
\begin{enumerate}
\item \textbf{Even dimension condition:} $\dim E_{\mathcal{W}} = 2(n+k)$ is even
\item \textbf{Local characteristic class condition:} Second Stiefel-Whitney class vanishes on $\mathcal{W}$: $w_2(TE_{\mathcal{W}})|_{\mathcal{W}} = 0$
\item \textbf{Observation compatibility:} Pullback induced by observation map satisfies: $h^*w_2(TY) = 0$ on $\mathcal{W}$
\item \textbf{Fiberization condition:} There exists observation-adaptive connection $\nabla$ on $\mathcal{W}$ satisfying appropriate curvature control
\item \textbf{Boundary condition:} Characteristic classes are continuous at boundary $\partial \mathcal{W}$, compatible with boundary handling
\end{enumerate}
\end{theorem}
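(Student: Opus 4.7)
The plan is to prove the biconditional by treating the two directions separately, with the topological conditions (1)--(3) arising as obstructions on the necessity side and the geometric conditions (4)--(5) feeding directly into an explicit construction on the sufficiency side. The organizing idea is that any symplectic form $\Omega$ on $E_{\mathcal{W}}$ must split horizontally/vertically via a compatible almost complex structure, and conversely any such splitting, once the correct curvature data is available, assembles into a closed nondegenerate 2-form.

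For the necessity direction I would first extract condition (1) from the basic fact that $\Omega^{n+k}$ is a volume form, forcing $\dim E_{\mathcal{W}} = 2n + 2k$. To obtain (2), I would invoke the standard compatibility result that any symplectic manifold admits an almost complex structure $J$ tamed by $\Omega$; the Chern classes of $(TE_{\mathcal{W}}, J)$ then reduce mod 2 to the Stiefel-Whitney classes, so obstruction to the very existence of $J$ gives $w_2(TE_{\mathcal{W}})|_{\mathcal{W}} = 0$. For (3), I would use naturality of characteristic classes: the vertical subbundle $V E_{\mathcal{W}}$ is locally modelled on $h^{*} T^{*}Y$, so the vertical symplectic structure forces $h^{*} w_2(TY) = 0$. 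Conditions (4) and (5) then follow by observing that a compatible almost complex $J$ induces an Ehresmann splitting $TE_{\mathcal{W}} = H_{\nabla}E_{\mathcal{W}} \oplus VE_{\mathcal{W}}$ which by the results of Section~\ref{subsec:local_connection} must agree up to isomorphism with an observation-adaptive connection, and continuity of $\Omega$ to $\partial \mathcal{W}$ propagates to its curvature and its characteristic classes.

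For sufficiency I would construct $\Omega$ explicitly. Given the connection $\nabla$ from (4), let $\theta_{\mathrm{can}}$ be the tautological 1-form on the fiberwise cotangent structure $T^{*}_{h(x)}Y$ extended horizontally using $\nabla$, and define
\[
\Omega \;=\; \pi^{*}\omega \;+\; d\theta_{\mathrm{can}} \;+\; d\alpha_{\nabla},
\]
where $\alpha_{\nabla}$ is a horizontal 1-form built from the mixed connection coefficients $\Gamma^{a}_{bi}$ whose exterior derivative cancels the mixed curvature contribution from the fiberization of the cotangent pairing. Nondegeneracy is verified fiberwise: on vertical pairs $(V_a, U_b)$ the form reduces to the standard symplectic pairing $\delta_a^{b}$, on horizontal pairs to $\pi^{*}\omega$, and the mixed block vanishes by construction of $\alpha_{\nabla}$. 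Closedness reduces to $d(\pi^{*}\omega) = \pi^{*}d\omega = 0$ together with $d^{2}(\theta_{\mathrm{can}} + \alpha_{\nabla}) = 0$ away from curvature defects that (4) controls, while the obstruction classes in (2)--(3) ensure that the locally defined $\alpha_{\nabla}$ patches into a globally well-defined form on $\mathcal{W}$.

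The hard part I expect is the closedness check, specifically reconciling the mixed curvature $R^{\nabla}{}^{a}{}_{bij} = \tfrac{1}{2}[(\partial_i \rho^{ac})(\partial_j \rho_{bc}) - (\partial_j \rho^{ac})(\partial_i \rho_{bc})]$ computed in the previous section with the requirement $d\Omega = 0$. This forces $\alpha_{\nabla}$ to satisfy a linear first-order equation whose solvability on $\mathcal{W}$ is precisely the cohomological content of conditions (2) and (3): the obstruction to solving it is a class in $H^{2}(\mathcal{W}; \mathbb{Z}/2)$ that vanishes iff $w_2(TE_{\mathcal{W}})|_{\mathcal{W}} = 0$ and $h^{*} w_2(TY) = 0$. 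The boundary clause (5) then guarantees that the chosen $\alpha_{\nabla}$ extends continuously to $\partial \mathcal{W}$ through the smooth transition to the degenerate connection, closing the argument on $\overline{\mathcal{W}}$.
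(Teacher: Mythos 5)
Your necessity argument for condition (2) does not work. The existence of an almost complex structure $J$ tamed by a symplectic form only yields $w_2(TE_{\mathcal{W}}) \equiv c_1(TE_{\mathcal{W}},J) \pmod 2$, i.e.\ that $w_2$ admits an integral lift (a spin$^c$ condition), not that $w_2 = 0$; for example $\mathbb{CP}^2$ is a closed symplectic manifold with $w_2 \neq 0$, so ``symplectic $\Rightarrow w_2 = 0$'' is false as a general obstruction argument. The same flaw affects your derivation of (3): a fiberwise symplectic pairing on $VE_{\mathcal{W}} \cong h^*T^*Y$ does not force $h^*w_2(TY) = 0$. The paper never deduces (2)--(3) from the symplectic form at all; its proof observes that in the working-region setting these conditions hold automatically (it invokes the triviality of $E_{\mathcal{W}}$ over $\mathcal{W}$ and the diffeomorphism property of $h$ on $\mathcal{W}$), so the ``only if'' direction is vacuous there, and existence is then supplied by the explicit construction in the subsequent theorem ($\omega_{E_{\mathcal{W}}} = \pi^*\omega_{\mathcal{W}} + \omega_{\mathrm{fib}} + \Omega_{\mathrm{mix}}$ with the cutoff/boundary decomposition).

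On sufficiency, your ansatz $\Omega = \pi^*\omega + d\theta_{\mathrm{can}} + d\alpha_{\nabla}$ is close in spirit to the paper's construction, but the pivotal claim --- that the obstruction to solving the linear equation for $\alpha_{\nabla}$ is a class in $H^2(\mathcal{W};\mathbb{Z}/2)$ which vanishes iff $w_2(TE_{\mathcal{W}})|_{\mathcal{W}} = 0$ and $h^*w_2(TY) = 0$ --- mixes coefficient systems in a way that cannot be repaired: the obstruction to exactifying a closed real-valued 2-form (the mixed curvature contribution) lives in de Rham cohomology $H^2_{dR}(\mathcal{W};\mathbb{R})$, and mod-2 Stiefel--Whitney classes cannot detect or control it. The paper does not exactify the mixed term; instead it keeps $\Omega_{\mathrm{mix}}$ and verifies $d\omega_{E_{\mathcal{W}}} = 0$ directly from the symmetries of the structure functions ($C_{iab} = C_{iba}$, $F_{iab} = F_{iba}$, $D_{iab} = E_{iba}$) supplied by the observation-adaptive connection, with the boundary clause handled by a smooth cutoff to a controlled degenerate form. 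So while your overall architecture (genuine two-direction proof, form built from the connection data) is more ambitious than the paper's, the two steps that carry the topological content --- deducing (2)--(3) from the symplectic structure, and identifying the patching obstruction with $w_2$ --- fail as stated and would need to be replaced by the paper's ``conditions are automatic on $\mathcal{W}$, then construct'' route or by a correct obstruction-theoretic argument.
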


\begin{proof}[Constructive proof]
\textbf{Step 1: Even dimension verification}
$\dim E_{\mathcal{W}} = \dim \mathcal{W} + 2 \dim Y = 2n + 2k = 2(n+k)$ is automatically even.

\textbf{Step 2: Characteristic class analysis}
Since $\mathcal{W}$ is a connected open set and $\overline{\mathcal{W}}$ is compact, using topological properties of fiber bundles. $E_{\mathcal{W}}$ forms a vector bundle over $\mathcal{W}$ because all fibers are isomorphic to bounded subsets of $\mathbb{R}^{2k}$. Due to contractibility of $\mathcal{W}$, second Stiefel-Whitney class $w_2(TE_{\mathcal{W}})$ automatically vanishes, and observation compatibility is directly guaranteed by diffeomorphism property of $h$.

\textbf{Step 3: Connection existence}
Construct observation-adaptive connection on compact domain $\overline{\mathcal{W}}$: choose Riemannian metric $g_{\mathcal{W}}$ on $\mathcal{W}$, construct corresponding Levi-Civita connection $\nabla^{LC}$, adjust to observation-adaptive connection using diffeomorphism property of $h$, and finally implement smooth transition in boundary layer.

\textbf{Step 4: Boundary handling}
Boundary conditions are satisfied through the following mechanisms: continuity of characteristic classes on $\overline{\mathcal{W}}$ is guaranteed by compactness, degenerate connection maintains topological invariants at $\partial \mathcal{W}$, smooth transition in boundary layer preserves continuity of characteristic classes. Therefore all topological conditions are automatically satisfied under working region setting.
\end{proof}

\begin{theorem}[Rigorous Existence and Construction of Working Region Symplectic Structure]
Let $E_{\mathcal{W}}$ be a local observation-induced fiber bundle equipped with adaptive connection $\nabla$ satisfying vanishing vertical curvature and metric compatibility conditions. Then there exists a unique symplectic structure $\omega_{E_{\mathcal{W}}}$ satisfying:
\begin{enumerate}
\item \textbf{Interior completeness:} Within interior of $\mathcal{W}$, $\omega_{E_{\mathcal{W}}}$ is standard symplectic form
\item \textbf{Boundary degeneration:} As $x \to \partial \mathcal{W}$, symplectic structure smoothly degenerates while maintaining well-definedness of equations of motion
\item \textbf{Standard decomposition:} In local coordinates $(x^i, \xi_a, \pi_a)$ represented as:
\end{enumerate}

\begin{align*}
\omega_{E_{\mathcal{W}}} &= \chi(d(x,\partial \mathcal{W})) \cdot \omega_{\text{standard}} + (1-\chi(d(x,\partial \mathcal{W}))) \cdot \omega_{\text{boundary}}
\end{align*}

where $\chi: [0,\infty) \to [0,1]$ is a smooth cutoff function with $\chi(t) = 1$ when $t \geq \epsilon$ and $\chi(t) = 0$ when $t \leq \epsilon/2$. Standard symplectic form $\omega_{\text{standard}} = \pi^*\omega_{\mathcal{W}} + \omega_{\text{fib}} + \Omega_{\text{mix}}$ describes complete geometry of interior region, while $\omega_{\text{boundary}}$ is controlled degenerate symplectic form near boundary, ensuring system safety when approaching boundary.

\textbf{Specific form of standard decomposition:}

\textbf{Base manifold symplectic form:}
\begin{align*}
\pi^*\omega_{\mathcal{W}} &= \sum_{\alpha=1}^n dq^\alpha \wedge dp_\alpha \quad \text{(Darboux coordinates, restricted to $\mathcal{W}$)}
\end{align*}
or in general coordinates:
\begin{align*}
\pi^*\omega_{\mathcal{W}} &= \sum_{i,j=1}^{2n} \omega_{ij}(x) dx^i \wedge dx^j, \quad \omega_{ij} = -\omega_{ji}, \quad x \in \mathcal{W}
\end{align*}

\textbf{Fiber standard symplectic form:}
\begin{align*}
\omega_{\text{fib}} &= \sum_{a=1}^k d\xi_a \wedge d\pi_a \quad \text{(well-defined on $\mathcal{W}$)}
\end{align*}

\textbf{Curvature mixing term:}
\begin{align*}
\Omega_{\text{mix}} &= \sum_{i,a} K_{ia}(x,\xi,\pi) dx^i \wedge d\xi_a + \sum_{i,a} L_{ia}(x,\xi,\pi) dx^i \wedge d\pi_a
\end{align*}

\textbf{Boundedness of mixing coefficients:}
\begin{align*}
K_{ia}(x,\xi,\pi) &= \sum_b C_{iab}(x)\xi_b + \sum_b D_{iab}(x)\pi_b\\
L_{ia}(x,\xi,\pi) &= \sum_b E_{iab}(x)\xi_b + \sum_b F_{iab}(x)\pi_b
\end{align*}
where structure functions $C_{iab}, D_{iab}, E_{iab}, F_{iab}$ are bounded on $\overline{\mathcal{W}}$:
$$\max\{|C_{iab}(x)|, |D_{iab}(x)|, |E_{iab}(x)|, |F_{iab}(x)|\} \leq K_{\text{curv}} < \infty, \quad \forall x \in \overline{\mathcal{W}}$$

\end{theorem}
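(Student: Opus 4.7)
The plan is to build $\omega_{E_{\mathcal{W}}}$ piece by piece from the three geometric inputs already fixed on the working region: the symplectic form $\omega_{\mathcal{W}}$, the standard Liouville form on each cotangent fiber $T^*_{h(x)}Y$, and the horizontal–vertical decomposition supplied by the observation-adaptive connection $\nabla$ of Section~\ref{subsec:local_connection}. First I would define the pullback $\pi^*\omega_{\mathcal{W}}$ using the horizontal distribution $H_\nabla E_{\mathcal{W}}$, so that it annihilates all vertical vectors. Next I would define $\omega_{\text{fib}}$ fiberwise as the canonical symplectic form on $T^*_{h(x)}Y$, using the smooth dependence of $h$ and of the metric family $\rho_x$ on $x$ to guarantee that $\omega_{\text{fib}}$ is a smooth $2$-form on $E_{\mathcal{W}}$. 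Finally, I would define the mixing term $\Omega_{\text{mix}}$ through the connection $1$-form of $\nabla$ contracted against the tautological fiber coordinates, so that its structure functions $C_{iab}, D_{iab}, E_{iab}, F_{iab}$ are (up to the metric $\rho$ and its inverse) precisely the connection coefficients $\Gamma^a_{bi}$ determined uniquely in Step~3 of the connection existence theorem. Boundedness of these coefficients on $\overline{\mathcal{W}}$ then follows immediately from the curvature control hypothesis and from $\rho\in C^2(\overline{\mathcal{W}})$.

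The central step is verifying $d\omega_{\text{standard}}=0$ on the interior $\mathcal{W}_{\text{safe}}$. Here $d(\pi^*\omega_{\mathcal{W}})=\pi^*d\omega_{\mathcal{W}}=0$ is automatic, and $d\omega_{\text{fib}}$ is computed fiberwise as the exterior derivative of a canonical Liouville form, picking up only terms proportional to the $x$-derivatives of $\rho_{ab}$. These terms are cancelled exactly by $d\Omega_{\text{mix}}$: expanding $d\Omega_{\text{mix}}$ produces one contribution from $\partial_i\Gamma^a_{jb}$ (the curvature $R^\nabla{}^a{}_{bij}$) and one contribution from $\partial_i\rho_{ab}$. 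Metric compatibility $\nabla\rho=0$, already proved in the lemma on metric compatibility verification, rewrites the second piece in terms of the same $\Gamma^a_{bi}$, cancelling the variation of $\omega_{\text{fib}}$; the first piece is killed by the hypothesis of vanishing vertical curvature $R^\nabla(X,Y)=0$ for $X,Y\in VE_{\mathcal{W}}$ together with the Bianchi-type identity for the adapted connection. Thus closedness reduces to the Cartan structure equations pushed onto $E_{\mathcal{W}}$, and the curvature control hypothesis is exactly what is needed to control the remaining horizontal contribution $R^{\mathcal{W}}$, which is absorbed into $\pi^*\omega_{\mathcal{W}}$.

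Non-degeneracy on the interior I would establish by writing the matrix of $\omega_{\text{standard}}$ in the basis $(H_i,V_a,U_a)$. It is block-triangular, with the top-left block being $\omega_{\mathcal{W}}$ (non-degenerate by hypothesis), the bottom $2k\times 2k$ block being the canonical symplectic matrix $\begin{pmatrix}0 & I\\-I & 0\end{pmatrix}$, and the off-diagonal blocks being the bounded mixing coefficients. Standard block-matrix identities then show the determinant equals $\det(\omega_{\mathcal{W}})\cdot(\pm 1)$ up to a correction whose magnitude is $O(\|\Omega_{\text{mix}}\|^2)$; on the compact set $\overline{\mathcal{W}_{\text{safe}}}$ this correction is uniformly bounded and does not destroy non-degeneracy. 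Uniqueness follows from the requirement that $\omega_{E_{\mathcal{W}}}$ respect the horizontal–vertical splitting induced by $\nabla$ together with the normalization $\omega_{E_{\mathcal{W}}}|_{VE_{\mathcal{W}}}=\omega_{\text{fib}}$ and $\pi_*\omega_{E_{\mathcal{W}}}|_{HE_{\mathcal{W}}}=\omega_{\mathcal{W}}$: any two candidates differ by a closed $2$-form vanishing on both distributions, which must vanish identically.

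The genuine obstacle is the boundary layer, where the partition-of-unity interpolation $\chi\cdot\omega_{\text{standard}}+(1-\chi)\cdot\omega_{\text{boundary}}$ is \emph{not} automatically closed: the differential produces a term $d\chi\wedge(\omega_{\text{standard}}-\omega_{\text{boundary}})$ that does not vanish in general. My approach would be to weaken the boundary claim to the one actually stated, namely that $\omega_{E_{\mathcal{W}}}$ \emph{smoothly degenerates} at $\partial\mathcal{W}$ while keeping Hamilton's equations well-posed. Concretely, I would choose $\omega_{\text{boundary}}$ so that $\omega_{\text{standard}}-\omega_{\text{boundary}}$ vanishes to first order along the cutoff locus $\{\chi\in(0,1)\}$, for instance by taking $\omega_{\text{boundary}}$ to be the degenerate limit of $\omega_{\text{standard}}$ obtained by letting $\rho_x$ and $\delta(x)$ approach their boundary values. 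This makes $d\chi\wedge(\omega_{\text{standard}}-\omega_{\text{boundary}})=O(d(x,\partial\mathcal{W}))$, so that closedness holds exactly on $\mathcal{W}_{\text{safe}}$ and degenerates smoothly toward $\partial\mathcal{W}$; the precise integrability condition $\int_0^{\epsilon}t^{-1/2}dt<\infty$ from the boundary compatible connection definition then ensures the degeneration is mild enough for the Hamiltonian vector field to remain well-defined. Uniqueness in the interior combined with this controlled extension gives the stated $\omega_{E_{\mathcal{W}}}$.
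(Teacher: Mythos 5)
Your construction follows essentially the same route as the paper: the same decomposition $\omega_{\text{standard}}=\pi^*\omega_{\mathcal{W}}+\omega_{\text{fib}}+\Omega_{\text{mix}}$ with the mixing coefficients built from the connection, closedness via connection identities, a perturbative block argument for non-degeneracy, and a cutoff interpolation in the boundary layer. Two remarks on the closedness and boundary steps. The paper works in the Darboux fiber coordinates, where $\omega_{\text{fib}}=\sum_a d\xi_a\wedge d\pi_a$ is exactly closed, and carries the whole burden on $d\Omega_{\text{mix}}=0$, which it derives from the structure-function symmetries $C_{iab}=C_{iba}$, $F_{iab}=F_{iba}$, $D_{iab}=E_{iba}$ plus Bianchi identities; your ``cancellation between $d\omega_{\text{fib}}$ and $d\Omega_{\text{mix}}$'' is a different bookkeeping of the same ingredients, but those symmetry relations are where the weight sits and should be made explicit rather than absorbed into ``Cartan structure equations.'' On the boundary you are in fact more careful than the paper: you correctly identify $d\chi\wedge(\omega_{\text{standard}}-\omega_{\text{boundary}})$ as the genuine obstruction, whereas the paper's own choice $\omega_{\text{boundary}}=(1-\delta_{\text{boundary}})\,\omega_{\text{standard}}$ makes the glued form a nonconstant scalar multiple of $\omega_{\text{standard}}$, which is generally not closed, and the paper merely asserts that a ``careful design of $\chi$'' fixes this; your weakening to smooth degeneration with well-posed dynamics matches what the theorem actually claims.

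The genuine gap is in your non-degeneracy step. Saying the off-diagonal mixing blocks are ``uniformly bounded on $\overline{\mathcal{W}_{\text{safe}}}$ and do not destroy non-degeneracy'' is not an argument: a bounded perturbation of a nondegenerate block matrix can certainly be degenerate. What is needed, and what the paper's proof uses, is \emph{smallness}, and the mechanism is structural: $K_{ia}$ and $L_{ia}$ are linear in $(\xi,\pi)$, hence vanish identically on the zero section and are of size $O(\delta(x))\le O(\Delta)$ on the fiber, so after eliminating the fiber components the induced correction to the base block is $O(\delta^2)$, which is then compared against $\lambda_{\min}(\omega^{\mathcal{W}})$ on the safe region. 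Without invoking this vanishing-at-the-zero-section/linearity structure, or else imposing an explicit smallness hypothesis of the form $K_{\text{curv}}\Delta$ small relative to $\lambda_{\min}(\omega^{\mathcal{W}})$, your block-determinant estimate does not close. Secondarily, your uniqueness argument has a hole: a $2$-form vanishing on $H\times H$ and on $V\times V$ need not vanish identically, since it can be purely of mixed type; fixing $\omega|_{VE_{\mathcal{W}}}$ and $\pi_*\omega|_{HE_{\mathcal{W}}}$ does not pin down the mixed block, so you must additionally require that the mixed part be the one dictated by the connection (i.e.\ equal to $\Omega_{\text{mix}}$), which is effectively what the stated decomposition builds in.
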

\begin{remark}[Rigor of Construction and Boundary Handling]
The constructed symplectic structure exhibits different properties in different regions. In interior region $\{x : d(x,\partial \mathcal{W}) \geq \epsilon\}$, symplectic structure $\omega_{E_{\mathcal{W}}} = \omega_{\text{standard}}$ maintains completeness of standard symplectic form, ensuring rigor of classical Hamiltonian dynamics. In boundary layer $\{x : d(x,\partial \mathcal{W}) \leq \epsilon\}$, smooth transition is achieved through carefully designed cutoff function $\chi$, avoiding sudden changes in geometric quantities. Design of boundary degenerate symplectic form $\omega_{\text{boundary}}$ ensures that even at boundary, Hamilton equations remain well-defined, with this degeneration having clear physical meaning: corresponding to increase in observation uncertainty and safety buffer mechanism of system dynamics.
\end{remark}

\begin{proof}
\textbf{Step 1: Topological condition verification}
On compact domain $\overline{\mathcal{W}}$, all necessary topological conditions are satisfied. Even dimension condition $\dim E_{\mathcal{W}} = 2(n+k)$ automatically holds, second Stiefel-Whitney class vanishes due to contractibility of $\mathcal{W}$: $w_2(TE_{\mathcal{W}}) = 0$, all topological invariants are well-defined on compact domain $\overline{\mathcal{W}}$.

\textbf{Step 2: Construction of standard symplectic form}
Within interior of $\mathcal{W}$, since observation map $h$ is diffeomorphism, standard symplectic form can be constructed:
\begin{align*}
\omega_{\text{standard}} &= \pi^*\omega_{\mathcal{W}} + \omega_{\text{fib}} + \Omega_{\text{mix}}
\end{align*}
where each term is well-defined and bounded on $\mathcal{W}$.

\textbf{Step 3: Design of boundary degenerate form}
Near boundary, design degenerate symplectic form $\omega_{\text{boundary}} = (1-\delta_{\text{boundary}}) \omega_{\text{standard}}$, where $\delta_{\text{boundary}} \in [0,1)$ is boundary degeneration parameter. This parameter takes value zero at distance $d(x,\partial \mathcal{W}) \geq \epsilon/2$ from boundary, gradually increasing to $\delta_{\max} < 1$ as $x$ approaches boundary, ensuring $\omega_{\text{boundary}}$ still maintains well-definedness of Hamilton equations.

\textbf{Step 4: Closedness verification}
Closedness of total symplectic form is guaranteed through region-wise verification. In interior region, $d\omega_{\text{standard}} = 0$ is confirmed through standard calculation. In boundary layer, mixed symplectic form $d(\chi \omega_{\text{standard}} + (1-\chi)\omega_{\text{boundary}}) = 0$ is guaranteed through careful design of cutoff function $\chi$. Global closedness is naturally obtained from local closedness and continuity of smooth transition.

\textbf{Step 5: Non-degeneracy}
In safety region $\{x : d(x,\partial \mathcal{W}) \geq \epsilon\}$, symplectic form maintains standard form, automatically satisfying non-degeneracy. In boundary layer, although symplectic form may undergo partial degeneration, through careful design of degeneration parameters, Hamiltonian dynamics still maintain well-definedness, providing necessary boundary protection mechanism for the system.
\end{proof}

\begin{definition}[Precise Definition of Structure Functions (Local Version)]
Structure functions of observation-adaptive connection on working region $\mathcal{W}$ are defined through standard pairing of curvature operators:
\begin{align*}
C_{iab}(x) &= \sum_c \rho^{ca}(x) \langle R^{\Upsilon}(H_i, V_b)V_c, e^a \rangle, \quad x \in \mathcal{W}\\
D_{iab}(x) &= \langle R^{\Upsilon}(H_i, V_b)U_a, \epsilon^b \rangle, \quad x \in \mathcal{W}\\
E_{iab}(x) &= \sum_c \rho^{ca}(x) \langle R^{\Upsilon}(H_i, U_b)V_c, e^a \rangle, \quad x \in \mathcal{W}\\
F_{iab}(x) &= \langle R^{\Upsilon}(H_i, U_b)U_a, \epsilon^b \rangle, \quad x \in \mathcal{W}
\end{align*}

\textbf{Boundedness guarantee:}
Due to compactness of $\mathcal{W}$ and diffeomorphism property of $h$, all structure functions are bounded on $\overline{\mathcal{W}}$:
\begin{align*}
\|C_{iab}\|_{L^\infty(\overline{\mathcal{W}})} &\leq K_C < \infty\\
\|D_{iab}\|_{L^\infty(\overline{\mathcal{W}})} &\leq K_D < \infty\\
\|E_{iab}\|_{L^\infty(\overline{\mathcal{W}})} &\leq K_E < \infty\\
\|F_{iab}\|_{L^\infty(\overline{\mathcal{W}})} &\leq K_F < \infty
\end{align*}

\textbf{Mixing coefficients:}
\begin{align*}
K_{ia}(x,\xi,\pi) &= \sum_b C_{iab}(x)\xi_b + \sum_b D_{iab}(x)\pi_b\\
L_{ia}(x,\xi,\pi) &= \sum_b E_{iab}(x)\xi_b + \sum_b F_{iab}(x)\pi_b
\end{align*}

\textbf{Index pairing principle:}
Index pairing of structure functions follows clear geometric principles, reflecting intrinsic relationship between curvature and metric on fiber bundles. For $C_{iab}$ and $E_{iab}$, pairing $\langle \cdot V_c, e^a \rangle$ extracts components of vector field in $a$-th fiber base direction, where $c$ sums within fiber dimension range, reflecting curvature contribution in fiber directions. For $D_{iab}$ and $F_{iab}$, pairing $\langle \cdot U_a, \epsilon^b \rangle$ extracts projection components of $U_a$ in dual direction $\epsilon^b$, ensuring correct correspondence between geometric quantities in tangent and cotangent spaces of fiber bundle.

\textbf{Geometric meaning within working region:}
Within working region $\mathcal{W}$, all geometric structures have good mathematical properties. Curvature operator $R^{\Upsilon}$ as well-defined geometric quantity on $\mathcal{W}$, its boundedness ensures continuity and differentiability of all structure functions. Inverse matrix $\rho^{ca}(x)$ of fiber metric maintains consistent non-degeneracy on closed working region $\overline{\mathcal{W}}$, which is direct result of local diffeomorphism property of observation map. Natural pairing $\langle \cdot, \cdot \rangle$ maintains smoothness and non-degeneracy throughout $\mathcal{W}$, ensuring completeness of geometric structure. Particularly important is continuity of all structure functions at boundary $\partial \mathcal{W}$, guaranteeing consistency of overall geometric structure and providing solid mathematical foundation for boundary handling.

\begin{remark}[Geometric Meaning of Pairing Principle (Local Version)]
Pairing $\langle R^{\Upsilon}(H_i, V_b)U_a, \epsilon^b \rangle$ embodies profound geometric meaning: it represents projection coefficient of result produced by curvature operator $R^{\Upsilon}(H_i, V_b)$ acting on vertical vector field $U_a$ in dual direction $\epsilon^b$. This pairing exhibits three key characteristics within working region $\mathcal{W}$: matching of index $b$ ensures correctness of geometric pairing between fiber direction and its dual direction, reflecting intrinsic symmetry of fiber bundle geometry; boundedness of curvature directly guarantees well-definedness of pairing operation, preventing divergence of geometric quantities; continuity at boundary ensures local geometric structure can be smoothly extended globally, maintaining consistency and completeness of geometric structure throughout fiber bundle.
\end{remark}

\end{definition}

\begin{lemma}[Closedness Verification: Rigorous Results within Working Region]
Constructed 2-form $\omega_{E_{\mathcal{W}}}$ satisfies $d\omega_{E_{\mathcal{W}}} = 0$ on working region $\mathcal{W}$.
\end{lemma}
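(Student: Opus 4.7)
The plan is to prove $d\omega_{E_{\mathcal{W}}}=0$ by reducing to three separate closedness statements, one for each piece of the standard decomposition, and then handling the cutoff-interpolation term $d\chi$-wise. I would begin by localizing the problem: on the safety region $\{x : d(x,\partial\mathcal{W})\geq\epsilon\}$ the cutoff function satisfies $\chi\equiv 1$, so $\omega_{E_{\mathcal{W}}}=\omega_{\text{standard}}=\pi^{*}\omega_{\mathcal{W}}+\omega_{\text{fib}}+\Omega_{\text{mix}}$, and closedness reduces to verifying $d\omega_{\text{standard}}=0$ on this open set. Outside $\{x : d(x,\partial\mathcal{W})\leq\epsilon/2\}$ we have $\chi\equiv 0$ and $\omega_{E_{\mathcal{W}}}=\omega_{\text{boundary}}=(1-\delta_{\text{boundary}})\omega_{\text{standard}}$, so the same verification suffices up to one additional $d\delta_{\text{boundary}}$ term, which is handled in the same way as the cutoff term below.

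For $d\omega_{\text{standard}}=0$ I would treat each summand separately. First, $d(\pi^{*}\omega_{\mathcal{W}})=\pi^{*}(d\omega_{\mathcal{W}})=0$ since $\omega_{\mathcal{W}}$ is the restriction of the symplectic form on $M$ and $d$ commutes with pullback. Second, $d\omega_{\text{fib}}=\sum_{a}d(d\xi_{a}\wedge d\pi_{a})=0$ directly, because each factor is itself exact and $d^{2}=0$. The substantive calculation is $d\Omega_{\text{mix}}=0$: writing $\Omega_{\text{mix}}=\sum_{i,a}K_{ia}\,dx^{i}\wedge d\xi_{a}+\sum_{i,a}L_{ia}\,dx^{i}\wedge d\pi_{a}$ with the linear-in-fiber mixing coefficients specified in the theorem, one obtains
\begin{align*}
d\Omega_{\text{mix}} &= \sum_{i,j,a}\partial_{j}K_{ia}\,dx^{j}\wedge dx^{i}\wedge d\xi_{a} + \sum_{i,a,b}\partial_{\xi_{b}}K_{ia}\,d\xi_{b}\wedge dx^{i}\wedge d\xi_{a} \\
&\quad + \sum_{i,a,b}\partial_{\pi_{b}}K_{ia}\,d\pi_{b}\wedge dx^{i}\wedge d\xi_{a} + \text{(analogous terms from }L_{ia}).
\end{align*}
The $\partial_{j}K_{ia}$ and $\partial_{j}L_{ia}$ pieces antisymmetrize in $(i,j)$, so their vanishing amounts to the pair condition $\partial_{[j}C_{i]ab}=0$ and $\partial_{[j}E_{i]ab}=0$ (and analogously for $D,F$). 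These identities are the algebraic Bianchi identities for the curvature $R^{\nabla}$ expressed through the structure-function pairings in the preceding definition, combined with the vanishing of the vertical curvature assumed for the flat adaptive connection. The mixed fiber-horizontal terms reduce, via the linear dependence of $K_{ia},L_{ia}$ on $(\xi,\pi)$ and the standard symplectic pairing $\{\xi_{a},\pi_{b}\}=\delta_{a}^{b}$, to identities of the form $C_{iab}=F_{iba}$ and $D_{iab}=-E_{iba}$ which again follow from the symmetries of the Riemann tensor.

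For the transition layer $\{\epsilon/2\leq d(x,\partial\mathcal{W})\leq\epsilon\}$ one has
\[
d\omega_{E_{\mathcal{W}}} = d\chi\wedge(\omega_{\text{standard}}-\omega_{\text{boundary}}) + \chi\,d\omega_{\text{standard}} + (1-\chi)\,d\omega_{\text{boundary}}.
\]
The last two summands vanish by the interior calculation above applied to $\omega_{\text{standard}}$ (and hence to the scalar multiple $\omega_{\text{boundary}}=(1-\delta_{\text{boundary}})\omega_{\text{standard}}$, up to a $d\delta_{\text{boundary}}$ contribution absorbed into the first summand). The remaining obstruction is the cutoff term $d\chi\wedge\delta_{\text{boundary}}\,\omega_{\text{standard}}$, which I expect to be the main technical difficulty: closedness forces one to choose the degeneration parameter $\delta_{\text{boundary}}$ as a function of $d(x,\partial\mathcal{W})$ proportional to $\chi$ itself, so that $d\chi\wedge(\omega_{\text{standard}}-\omega_{\text{boundary}})=d(\chi\delta_{\text{boundary}})\wedge\omega_{\text{standard}}$ rearranges into a total differential when paired against $\omega_{\text{standard}}$'s exact part. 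Concretely, I would impose the compatibility relation $d(\chi\delta_{\text{boundary}})\wedge\pi^{*}\omega_{\mathcal{W}}=0$ at the level of the base (automatic since $d(\chi\delta_{\text{boundary}})$ is pulled back from the base and $\omega_{\mathcal{W}}$ is closed), and then show the fiber-directional contributions cancel against the modification encoded in $\Omega_{\text{mix}}$. This last cancellation, ultimately a first-order compatibility between the cutoff, the boundary degeneration rate, and the curvature structure functions, is where the full care is needed; the interior calculations are routine once the Bianchi-type identities are in place.
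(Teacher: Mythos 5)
Your overall strategy matches the paper's: split $\omega_{\text{standard}}$ into $\pi^*\omega_{\mathcal{W}}+\omega_{\text{fib}}+\Omega_{\text{mix}}$, observe the first two are trivially closed, reduce $d\Omega_{\text{mix}}=0$ to algebraic identities on the structure functions plus Bianchi-type relations for the base-direction derivatives, and treat the cutoff layer separately. However, the key algebraic step is misstated, and as written it would not produce the required cancellations. Expanding $d\Omega_{\text{mix}}$, the fiber-derivative contributions are
\begin{align*}
\sum_{i,a,b} C_{iab}\, d\xi_b\wedge dx^i\wedge d\xi_a,\qquad
\sum_{i,a,b} F_{iab}\, d\pi_b\wedge dx^i\wedge d\pi_a,\qquad
\sum_{i,a,b}\bigl(D_{iab}-E_{iba}\bigr)\, dx^i\wedge d\xi_a\wedge d\pi_b .
\end{align*}
The first two terms lie in different graded components of $\Lambda^3$ (two $d\xi$'s versus two $d\pi$'s), so no relation of the form $C_{iab}=F_{iba}$ can make them cancel against each other; each must vanish on its own, which forces symmetry in the fiber indices, $C_{iab}=C_{iba}$ and $F_{iab}=F_{iba}$. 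Likewise the mixed term vanishes precisely when $D_{iab}=E_{iba}$, with a plus sign, not $D_{iab}=-E_{iba}$: writing $D_{iab}\,d\pi_b\wedge dx^i\wedge d\xi_a = D_{iab}\,dx^i\wedge d\xi_a\wedge d\pi_b$ and $E_{iba}\,d\xi_a\wedge dx^i\wedge d\pi_b = -E_{iba}\,dx^i\wedge d\xi_a\wedge d\pi_b$ makes this immediate. With your sign convention the two families would add rather than cancel. The paper derives exactly these identities ($C_{iab}=C_{iba}$, $F_{iab}=F_{iba}$, $D_{iab}=E_{iba}$) not from generic Riemann-tensor symmetries but from the specific construction: $h$ a diffeomorphism on $\mathcal{W}$, metric compatibility $\nabla\rho=0$, and vanishing fiber curvature of the observation-adaptive connection.

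Two smaller remarks: the invocation of the symplectic pairing $\{\xi_a,\pi_b\}=\delta_a^b$ plays no role in a closedness computation and should be dropped; and your treatment of the transition layer, isolating the $d\chi\wedge(\omega_{\text{standard}}-\omega_{\text{boundary}})$ obstruction and tying the degeneration parameter to $\chi$, is a legitimate concern that the paper handles only by appeal to the smoothness of $\chi$ and a ``modified closedness'' in the degenerate region, so that part of your sketch is compatible with (indeed somewhat more explicit than) the paper, though it remains unfinished.
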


\begin{proof}
Within working region $\mathcal{W}$, observation map $h$ is diffeomorphism onto image, allowing rigorous verification.

\textbf{Step 1: Closedness of base manifold symplectic form}
$d(\pi^*\omega_{\mathcal{W}}) = \pi^*(d\omega_{\mathcal{W}}) = 0$, because $\omega_{\mathcal{W}}$ is symplectic form on $\mathcal{W}$.

\textbf{Step 2: Closedness of fiber symplectic form}
$d\omega_{\text{fib}} = d\left(\sum_{a=1}^k d\xi_a \wedge d\pi_a\right) = 0$, because $d(d\xi_a) = 0$ and summation is over finite range.

\textbf{Step 3: Closedness verification of mixing terms}
For curvature mixing term $\Omega_{\text{mix}} = \sum_{i,a} K_{ia} dx^i \wedge d\xi_a + \sum_{i,a} L_{ia} dx^i \wedge d\pi_a$, where:
\begin{align*}
K_{ia}(x,\xi,\pi) &= \sum_b C_{iab}(x)\xi_b + \sum_b D_{iab}(x)\pi_b\\
L_{ia}(x,\xi,\pi) &= \sum_b E_{iab}(x)\xi_b + \sum_b F_{iab}(x)\pi_b
\end{align*}

Calculate exterior derivative:
\begin{align*}
d\Omega_{\text{mix}} &= \sum_{i,j,a} \frac{\partial K_{ia}}{\partial x^j} dx^j \wedge dx^i \wedge d\xi_a + \sum_{i,b,a} \frac{\partial K_{ia}}{\partial \xi_b} d\xi_b \wedge dx^i \wedge d\xi_a\\
&\quad + \sum_{i,b,a} \frac{\partial K_{ia}}{\partial \pi_b} d\pi_b \wedge dx^i \wedge d\xi_a\\
&\quad + \sum_{i,j,a} \frac{\partial L_{ia}}{\partial x^j} dx^j \wedge dx^i \wedge d\pi_a + \sum_{i,b,a} \frac{\partial L_{ia}}{\partial \xi_b} d\xi_b \wedge dx^i \wedge d\pi_a\\
&\quad + \sum_{i,b,a} \frac{\partial L_{ia}}{\partial \pi_b} d\pi_b \wedge dx^i \wedge d\pi_a
\end{align*}

\textbf{Key observation:} Within working region $\mathcal{W}$, due to diffeomorphism property of $h$ and special construction of observation-adaptive connection, mixing coefficients satisfy \textbf{integrability conditions}:

\textbf{Calculation based on structure functions:}
\begin{align*}
\frac{\partial K_{ia}}{\partial \xi_b} &= \frac{\partial}{\partial \xi_b}\left(\sum_c C_{iac}(x)\xi_c + \sum_c D_{iac}(x)\pi_c\right) = C_{iab}(x)\\
\frac{\partial K_{ia}}{\partial \pi_b} &= \frac{\partial}{\partial \pi_b}\left(\sum_c C_{iac}(x)\xi_c + \sum_c D_{iac}(x)\pi_c\right) = D_{iab}(x)\\
\frac{\partial L_{ia}}{\partial \xi_b} &= \frac{\partial}{\partial \xi_b}\left(\sum_c E_{iac}(x)\xi_c + \sum_c F_{iac}(x)\pi_c\right) = E_{iab}(x)\\
\frac{\partial L_{ia}}{\partial \pi_b} &= \frac{\partial}{\partial \pi_b}\left(\sum_c E_{iac}(x)\xi_c + \sum_c F_{iac}(x)\pi_c\right) = F_{iab}(x)
\end{align*}

\textbf{Verification of integrability conditions:}
Due to construction of observation-adaptive connection and diffeomorphism property of $h$ on $\mathcal{W}$:
\begin{align*}
C_{iab}(x) &= C_{iba}(x) \quad \text{(symmetry)}\\
F_{iab}(x) &= F_{iba}(x) \quad \text{(symmetry)}\\
D_{iab}(x) &= E_{iba}(x) \quad \text{(mixed symmetry)}
\end{align*}

\textbf{Cancellation of exterior derivative terms:}
These symmetries ensure:
\begin{itemize}
\item $\sum_{i,b,a} C_{iab} d\xi_b \wedge dx^i \wedge d\xi_a = \sum_{i,b,a} C_{iba} d\xi_a \wedge dx^i \wedge d\xi_b = 0$ (antisymmetry)
\item $\sum_{i,b,a} F_{iab} d\pi_b \wedge dx^i \wedge d\pi_a = \sum_{i,b,a} F_{iba} d\pi_a \wedge dx^i \wedge d\pi_b = 0$ (antisymmetry)
\item Mixed terms $\sum_{i,b,a} D_{iab} d\pi_b \wedge dx^i \wedge d\xi_a$ and $\sum_{i,b,a} E_{iba} d\xi_a \wedge dx^i \wedge d\pi_b$ cancel each other
\end{itemize}

\textbf{Handling of $x$-derivative terms:}
For $\frac{\partial K_{ia}}{\partial x^j}$ and $\frac{\partial L_{ia}}{\partial x^j}$ terms:
\begin{align*}
\frac{\partial K_{ia}}{\partial x^j} &= \sum_b \frac{\partial C_{iab}}{\partial x^j}\xi_b + \sum_b \frac{\partial D_{iab}}{\partial x^j}\pi_b\\
\frac{\partial L_{ia}}{\partial x^j} &= \sum_b \frac{\partial E_{iab}}{\partial x^j}\xi_b + \sum_b \frac{\partial F_{iab}}{\partial x^j}\pi_b
\end{align*}

By Bianchi identities of observation-adaptive connection, these terms satisfy appropriate cancellation relations on $\mathcal{W}$.

\textbf{Geometric foundation:} These integrability conditions arise from:
\begin{enumerate}
\item Diffeomorphism property of observation map $h$ within $\mathcal{W}$, particularly invertibility of $h^*$ operation
\item Observation-adaptive property of connection, ensuring special structure of curvature tensor
\item Connectedness and compactness of working region, guaranteeing global consistency of geometric structure
\end{enumerate}

Therefore $d\Omega_{\text{mix}} = 0$ on $\mathcal{W}$, thus $d\omega_{E_{\mathcal{W}}} = 0$.

\begin{remark}[Proof of Structure Function Symmetry]
Under working region setting where observation map $h: W \to Y$ is diffeomorphism, symmetry conditions of structure functions can be rigorously proved.

Key insight: Mixed coefficients of observation-adaptive connection satisfy
$$\Gamma^a_{bi} = -\frac{\partial h^c}{\partial x^i}\Gamma^Y_{cb}(h(x)) + K^a_{bi}$$
where $K^a_{bi}$ is metric compatibility correction term.

Since $h$ is diffeomorphism, induced pullback metric $h^*g_Y$ is compatible with fiber metric $\rho$:
$$\rho_{ab}(x) = (h^*g_Y)_{ab}(x) = g^Y_{cd}(h(x))\frac{\partial h^c}{\partial \xi^a}\frac{\partial h^d}{\partial \xi^b}$$

This compatibility combined with observation compatibility $\nabla \rho = 0$ ensures curvature tensor satisfies:
$$R^\nabla(H_i, V_b)V_c = R^\nabla(H_i, V_c)V_b + O(\text{fiber curvature terms})$$

Under condition of vanishing fiber curvature (Definition 7), this directly leads to:
$$C_{iab}(x) = \rho^{ca}(x)\langle R^\nabla(H_i, V_b)V_c, e^a\rangle = C_{iba}(x)$$

Similarly, $F_{iab} = F_{iba}$ and $D_{iab} = E_{iba}$ also hold.
\end{remark}

\begin{remark}[Rigor of Boundary Handling]
\begin{itemize}
\item \textbf{Interior rigor:} On $\{x : d(x,\partial \mathcal{W}) \geq \epsilon\}$, $d\omega_{E_{\mathcal{W}}} = 0$ holds rigorously
\item \textbf{Boundary continuity:} Closedness is continuous at boundary $\partial \mathcal{W}$, guaranteed by smoothness of cutoff function $\chi$
\item \textbf{Degeneration compatibility:} Even in boundary degeneration region, modified closedness condition still holds
\end{itemize}
\end{remark}
\end{proof}

\begin{theorem}[Non-degeneracy within Working Region]
On safety subdomain $\mathcal{W}_{\text{safe}} := \{x \in \mathcal{W} : d(x,\partial \mathcal{W}) \geq \epsilon_{\text{safe}}\}$ of working region $\mathcal{W}$, constructed 2-form $\omega_{E_{\mathcal{W}}}$ is non-degenerate, thus defining symplectic structure on $(E_{\mathcal{W}}|_{\mathcal{W}_{\text{safe}}}, \omega_{E_{\mathcal{W}}})$.
\end{theorem}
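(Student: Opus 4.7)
The plan is to reduce the non-degeneracy question to a block matrix computation and then use the horizontal-vertical splitting induced by the observation-adaptive connection to absorb the coupling, in the same spirit as how the canonical symplectic form on $T^{*}N$ remains non-degenerate independently of any connection choice. First I would note that on $\mathcal{W}_{\text{safe}}$ the cutoff $\chi(d(x,\partial\mathcal{W}))$ equals one by construction, so $\omega_{E_{\mathcal{W}}}$ coincides with $\pi^{*}\omega_{\mathcal{W}}+\omega_{\text{fib}}+\Omega_{\text{mix}}$, with all three terms rigorously well-defined by the closedness lemma and the $L^{\infty}$ bounds on the structure functions. Passing to local coordinates $(x^{i},\xi_{a},\pi_{a})$ with $i\in\{1,\dots,2n\}$ and $a\in\{1,\dots,k\}$, I would record the associated skew-symmetric matrix $M$ of size $2(n+k)$ in block form
\[
M=\begin{pmatrix} W & A & B \\ -A^{\top} & 0 & I_{k} \\ -B^{\top} & -I_{k} & 0 \end{pmatrix},
\]
where $W$ represents $\omega_{\mathcal{W}}$ on the base and the $2n\times k$ blocks $A,B$ are built from the mixing coefficients $K_{ia}, L_{ia}$.

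Next I would apply the Schur complement formula against the fiber block $J_{k}=\bigl(\begin{smallmatrix}0 & I_{k} \\ -I_{k} & 0\end{smallmatrix}\bigr)$, which is the standard symplectic matrix with $\det J_{k}=1$ and $J_{k}^{-1}=-J_{k}$. A short computation gives
\[
\det M=\det J_{k}\cdot\det\bigl(W+BA^{\top}-AB^{\top}\bigr),
\]
so non-degeneracy of $\omega_{E_{\mathcal{W}}}$ reduces to invertibility of the skew-symmetric matrix $W+BA^{\top}-AB^{\top}$ pointwise on $\mathcal{W}_{\text{safe}}$.

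To establish this I would use the horizontal lifts $\tilde H_{i}$ determined by the observation-adaptive connection, redefining the base frame so that the cross-pairings $\omega_{E_{\mathcal{W}}}(\tilde H_{i},V_{b})$ and $\omega_{E_{\mathcal{W}}}(\tilde H_{i},U_{b})$ vanish identically. The closedness proof already established the precise symmetry identities on the structure functions ($C_{iab}=C_{iba}$, $F_{iab}=F_{iba}$, $D_{iab}=E_{iba}$); these are exactly what is needed to force the cancellation in the horizontal frame, leaving the matrix of $\omega_{E_{\mathcal{W}}}$ in the adapted frame in the block-diagonal form $\mathrm{diag}(W,J_{k})$, which is manifestly non-degenerate.

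The hardest part will be ensuring that the horizontal-lift cancellation is algebraically exact rather than an estimate, because the mixing coefficients depend linearly on the fiber coordinates $(\xi,\pi)$ and the $\pi$ direction is unbounded. The structure-function symmetries inherited from observation compatibility and $\nabla\rho=0$ should in fact give an exact cancellation pointwise; as a backup, restricting to any bounded set in the fiber direction, the uniform $L^{\infty}$ bounds on $C, D, E, F$ over $\overline{\mathcal{W}_{\text{safe}}}$ combined with non-degeneracy of $W$ yield invertibility of $W+BA^{\top}-AB^{\top}$ by a standard perturbation estimate, which together with the exact cancellation mechanism gives non-degeneracy on all of $E_{\mathcal{W}}|_{\mathcal{W}_{\text{safe}}}$.
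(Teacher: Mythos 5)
Your Schur-complement reduction is correct: with the blocks as you set them up, $\det M=\det J_{k}\cdot\det\bigl(W+BA^{\top}-AB^{\top}\bigr)$, so non-degeneracy is equivalent to pointwise invertibility of $W+BA^{\top}-AB^{\top}$; this is just an equivalent packaging of the kernel computation the paper carries out. The genuine gap is in the step you yourself flag as the hardest: the claimed exact cancellation. Solving for the horizontal lifts gives $\tilde H_{i}=H_{i}-L_{ib}V_{b}+K_{ib}U_{b}$, and a direct computation yields
$\omega_{E_{\mathcal{W}}}(\tilde H_{i},\tilde H_{j})=W_{ij}+\sum_{b}\bigl(L_{ib}K_{jb}-K_{ib}L_{jb}\bigr)$,
i.e.\ exactly the Schur complement again --- the frame change is a congruence transformation and cannot produce $\mathrm{diag}(W,J_{k})$ unless $BA^{\top}-AB^{\top}$ already vanishes, so the "adapted frame" step merely restates what has to be proved. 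The symmetries supplied by the closedness lemma ($C_{iab}=C_{iba}$, $F_{iab}=F_{iba}$, $D_{iab}=E_{iba}$) are symmetries in the fiber indices at a \emph{fixed} base index $i$, whereas the obstruction $\sum_{b}(L_{ib}K_{jb}-K_{ib}L_{jb})$ is antisymmetric in two \emph{different} base indices $i\neq j$; no identity relating different base indices is available, so these symmetries do not force the cancellation. Generically this term is a nonzero quadratic expression in $(\xi,\pi)$, vanishing only on the zero section where $K_{ia}=L_{ia}=0$.

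Consequently your conclusion must rest entirely on the backup perturbation estimate, which is in fact the paper's actual route: it observes $K_{ib}(x,0,0)=L_{ib}(x,0,0)=0$, giving exact non-degeneracy at the fiber center, and then treats the mixing contribution as an $O(\delta^{2})$ perturbation of $\omega^{\mathcal{W}}_{ij}$ absorbed by $\lambda_{\min}(\omega^{\mathcal{W}})$ on $\mathcal{W}_{\text{safe}}$. But, as you note, $K_{ia},L_{ia}$ are linear in $(\xi,\pi)$ and the $\pi$ directions are not a priori bounded by $\delta(x)$, so the perturbation bound only yields non-degeneracy on a bounded neighbourhood of the zero section, or under an explicit smallness hypothesis of the form $\sup(|K_{ia}|+|L_{ia}|)^{2}\ll\lambda_{\min}(\omega^{\mathcal{W}})$, not on all of $E_{\mathcal{W}}|_{\mathcal{W}_{\text{safe}}}$. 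Your final sentence extends the result to the whole bundle "together with the exact cancellation mechanism," and therefore leans on the unproven cancellation. To repair the argument you must either establish $BA^{\top}=AB^{\top}$ from an additional structural identity of the observation-adaptive connection (not currently available), or state the non-degeneracy with an explicit smallness/boundedness restriction on the fiber coordinates --- which is what the paper's own perturbative proof implicitly assumes.
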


\begin{proof}
Let $(x,\xi,\pi) \in E_{\mathcal{W}}|_{\mathcal{W}_{\text{safe}}}$, $v \in T_{(x,\xi,\pi)}E_{\mathcal{W}}$ such that $\omega_{E_{\mathcal{W}}}(v, \cdot) = 0$.

In local coordinates write $v = v^i H_i + v^a V_a + v^a U_a$, where:
\begin{itemize}
\item $v^i$: components in base manifold directions, $i \in \{1,\ldots,2n\}$
\item $v^a$: components in fiber "position" directions, $a \in \{1,\ldots,k\}$
\item $v^a$: components in fiber "momentum" directions, $a \in \{1,\ldots,k\}$
\end{itemize}

\textbf{Expansion of non-degeneracy condition:}
Non-degeneracy condition $\omega_{E_{\mathcal{W}}}(v, \cdot) = 0$ is equivalent to pairing with all test vector fields being zero:
\begin{align*}
0 &= \omega_{E_{\mathcal{W}}}(v, H_j) = v^i \omega_{ij}^{\mathcal{W}} + v^a K_{ja} + v^a L_{ja} \quad (j = 1,\ldots,2n)\\
0 &= \omega_{E_{\mathcal{W}}}(v, V_b) = v^i K_{ib} + v^a \delta_a^b \quad (b = 1,\ldots,k)\\
0 &= \omega_{E_{\mathcal{W}}}(v, U_b) = v^i L_{ib} + v^a \delta_a^b \quad (b = 1,\ldots,k)
\end{align*}

\textbf{Step 1: Determination of fiber coordinate components}
From second and third equation systems:
\begin{align*}
v^i K_{ib} + v^b &= 0 \quad \Rightarrow \quad v^b = -v^i K_{ib}\\
v^i L_{ib} + v^b &= 0 \quad \Rightarrow \quad v^b = -v^i L_{ib}
\end{align*}

\textbf{Step 2: Compatibility analysis}
For above equation systems to be compatible, need to analyze relationship between $K_{ib}$ and $L_{ib}$ within safety region.

Since within safety region $\mathcal{W}_{\text{safe}}$, observation map $h$ is diffeomorphism and far from boundary, mixing terms satisfy:
\begin{align*}
K_{ib}(x,\xi,\pi) &= \sum_c C_{ibc}(x)\xi_c + \sum_c D_{ibc}(x)\pi_c\\
L_{ib}(x,\xi,\pi) &= \sum_c E_{ibc}(x)\xi_c + \sum_c F_{ibc}(x)\pi_c
\end{align*}

At typical point $(x,0,0)$ (fiber center), due to special construction of connection:
$$K_{ib}(x,0,0) = 0, \quad L_{ib}(x,0,0) = 0$$

Therefore near fiber center, compatibility condition simplifies to:
$$v^b = v^b = 0$$

\textbf{Step 3: Determination of base manifold components}
Substituting $v^a = v^a = 0$ into first equation system:
\begin{align*}
0 &= v^i \omega_{ij}^{\mathcal{W}} - v^l K_{la}(x,0,0) \cdot 0 - v^l L_{la}(x,0,0) \cdot 0\\
&= v^i \omega_{ij}^{\mathcal{W}}
\end{align*}

Since $\omega^{\mathcal{W}}_{ij}$ is non-degenerate on safety region $\mathcal{W}_{\text{safe}}$ (symplectic structure), we get:
$$v^i = 0, \quad \forall i$$

\textbf{Step 4: Perturbation analysis for general case}
For general point $(x,\xi,\pi) \neq (x,0,0)$, perform perturbation analysis. Let $\|(\xi,\pi)\|$ be sufficiently small, then:
\begin{align*}
K_{ib}(x,\xi,\pi) &= O(\|(\xi,\pi)\|)\\
L_{ib}(x,\xi,\pi) &= O(\|(\xi,\pi)\|)
\end{align*}

Compatibility condition becomes:
$$v^b = -v^i K_{ib} = O(\|(\xi,\pi)\| \cdot \|v^i\|)$$
$$v^b = -v^i L_{ib} = O(\|(\xi,\pi)\| \cdot \|v^i\|)$$

Substituting into first equation system:
\begin{align*}
0 &= v^i \omega_{ij}^{\mathcal{W}} + O(\|(\xi,\pi)\|^2 \cdot \|v^i\|^2)
\end{align*}

Within safety region, contribution of mixing terms can be controlled as:
$$\left|O(\|(\xi,\pi)\|^2 \cdot \|v^i\|^2)\right| \leq C \delta^2 \|v^i\|^2 \ll \lambda_{\min}(\omega^{\mathcal{W}}) \|v^i\|^2$$

where $\delta = \max_{x \in \mathcal{W}_{\text{safe}}} \delta(x)$ is bounded in safety region, $\lambda_{\min}(\omega^{\mathcal{W}})$ is minimum eigenvalue of base manifold symplectic structure.

Therefore modified symplectic matrix:
$$\tilde{\omega}_{ij}^{\mathcal{W}} := \omega_{ij}^{\mathcal{W}} + O(\delta^2)$$
is still non-degenerate, thus $v^i = 0$, and consequently $v^a = v^a = 0$.

\textbf{Conclusion:} $v = 0$, proving $\omega_{E_{\mathcal{W}}}$ is non-degenerate within safety region.

\begin{remark}[Geometric Meaning and Practical Considerations of Non-degeneracy]
\begin{enumerate}
\item \textbf{Necessity of safety region:} Within $\mathcal{W}_{\text{safe}}$, far from boundary degeneration effects, symplectic structure maintains completeness
\item \textbf{Physical reasonableness:} Actual systems usually operate within safety regions, degenerate treatment near boundaries conforms to engineering practice
\item \textbf{Numerical stability:} Non-degeneracy guarantees stable numerical solution of Hamilton equations
\item \textbf{Control theory connection:} Safety region corresponds to "operating region" in control theory, boundary degeneration corresponds to "safety margin"
\end{enumerate}
\end{remark}
\end{proof}

\begin{remark}[Singular Perturbation Analysis of Boundary Degeneration]
Mathematical rigor of boundary degenerate symplectic form $\omega_{\text{boundary}}$ can be guaranteed through singular perturbation theory.

Let $\epsilon = d(x, \partial W)$ be distance to boundary. Within boundary layer $\{\epsilon \leq \epsilon_0\}$, define degeneration parameter:
$$\delta_{\text{boundary}}(\epsilon) = 1 - \exp(-\epsilon/\epsilon_0)$$

Modified symplectic form is:
$$\omega_E = \omega_{\text{standard}} - \delta_{\text{boundary}}(\epsilon) \cdot \Omega_{\text{correction}}$$

where $\Omega_{\text{correction}}$ is chosen to maintain following properties:
\begin{enumerate}
\item \textbf{Asymptotic preservation:} As $\epsilon \to 0$, $\omega_E \to \omega_{\text{degenerate}}$ still defines well-posed (possibly degenerate) dynamics
\item \textbf{Energy dissipation:} Correction term introduces controlled dissipation: \\
$\frac{dH}{dt} = -\alpha\delta_{\text{boundary}}\|\nabla H\|^2 \leq 0$
\item \textbf{Geometric continuity:} All geometric quantities transition continuously in $C^\infty$ sense
\end{enumerate}

This construction ensures Hamilton equations at boundary degenerate to gradient flow, providing natural "safety attraction" mechanism.

Complete singular perturbation analysis requires verification of: (i) existence of inner layer solutions; (ii) matching conditions of boundary layer; (iii) boundedness of global solutions. These constitute independent research topics.
\end{remark}

\begin{corollary}[Hamiltonian Vector Field within Safety Region]
On symplectic manifold $(E_{\mathcal{W}}|_{\mathcal{W}_{\text{safe}}}, \omega_{E_{\mathcal{W}}})$, for any Hamiltonian function $H : E_{\mathcal{W}}|_{\mathcal{W}_{\text{safe}}} \to \mathbb{R}$, there exists unique Hamiltonian vector field $X_H$ satisfying standard Hamilton equations:
\begin{align*}
\dot{\xi}_a &= \frac{\partial H}{\partial \pi_a} + \text{mixing correction terms}\\
\dot{\pi}_a &= -\frac{\partial H}{\partial \xi_a} + \text{mixing correction terms}\\
\dot{x}^i &= \{x^i, H\}_{E_{\mathcal{W}}} \quad \text{(base manifold dynamics)}
\end{align*}

where mixing correction terms are determined by curvature tensor $\Omega_{\text{mix}}$:
\begin{align*}
\text{mixing correction terms}_{\xi_a} &= \sum_{i} \omega^{ij}_{E_{\mathcal{W}}} \frac{\partial H}{\partial x^j} K_{ia}\\
\text{mixing correction terms}_{\pi_a} &= \sum_{i} \omega^{ij}_{E_{\mathcal{W}}} \frac{\partial H}{\partial x^j} L_{ia}
\end{align*}

Complete representation in standard coordinates is:
\begin{align*}
X_H &= \sum_i X_H^i \frac{\partial}{\partial x^i} + \sum_a \left(\frac{\partial H}{\partial \pi_a} + \text{mixing correction terms}_{\xi_a}\right) \frac{\partial}{\partial \xi_a} \\
&\quad - \sum_a \left(\frac{\partial H}{\partial \xi_a} - \text{mixing correction terms}_{\pi_a}\right) \frac{\partial}{\partial \pi_a}
\end{align*}

This vector field possesses multiple important properties, reflecting intrinsic advantages of observation-induced geometry. First, it automatically preserves geometric structure of standard cotangent bundle and stratified property of fiber bundle, ensuring the system maintains basic characteristics of Hamiltonian geometry even under complex constraint conditions. Within interior of safety region $\mathcal{W}_{\text{safe}}$, vector field rigorously preserves completeness of Hamiltonian structure, guaranteeing accurate execution of classical dynamical laws. When system state approaches safety boundary $\partial \mathcal{W}_{\text{safe}}$, vector field exhibits good boundary compatibility, achieving smooth transition from interior Hamiltonian dynamics to boundary protection mechanism. More importantly, physical design of correction terms ensures accurate encoding of influence of observation uncertainty on dynamics, guaranteeing consistency between theoretical predictions and actual system behavior.

\begin{remark}[Boundary Behavior of Hamiltonian Dynamics]
The system exhibits carefully designed protection mechanisms when approaching safety region boundary. When trajectory distance to boundary $d(x,\partial \mathcal{W}_{\text{safe}})$ approaches safety threshold $\epsilon_{\text{safe}}$, warning mechanism automatically activates boundary control, timely adjusting system dynamics to avoid violating safety constraints. Hamiltonian vector field achieves smooth degeneration during this process, gradually transitioning from standard Hamiltonian form to boundary-compatible modified form, ensuring dynamical continuity and smoothness of all geometric quantities. Dynamics near boundary are specially designed to produce inward "attraction" effect, promoting deviated trajectories to naturally return to safety region, thus ensuring long-term safety while maintaining system performance.
\end{remark}

\end{corollary}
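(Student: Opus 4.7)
The plan is to reduce the claim to two independent tasks: (i) an abstract existence and uniqueness statement for $X_H$ on the symplectic manifold $(E_{\mathcal{W}}|_{\mathcal{W}_{\text{safe}}}, \omega_{E_{\mathcal{W}}})$, and (ii) a coordinate unpacking that recovers the explicit form of Hamilton's equations with the mixing corrections. For (i) I would invoke the non-degeneracy theorem proved immediately above, which supplies a pointwise linear isomorphism $X \mapsto \iota_X \omega_{E_{\mathcal{W}}}$ between $TE_{\mathcal{W}}|_{\mathcal{W}_{\text{safe}}}$ and $T^*E_{\mathcal{W}}|_{\mathcal{W}_{\text{safe}}}$; inverting this musical map and applying it to $dH$ produces a unique smooth vector field $X_H$. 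Because $dH$ depends smoothly on base and fiber coordinates within the safety region, the resulting $X_H$ inherits the regularity of $\omega_{E_{\mathcal{W}}}^{-1}$ and $H$.

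Next, I would derive the coordinate form by writing $X_H = X_H^i \partial_{x^i} + Y_H^a \partial_{\xi_a} + Z_H^a \partial_{\pi_a}$ and expanding $\iota_{X_H}\omega_{E_{\mathcal{W}}} = dH$ against the three families of test vectors $\partial_{x^j}$, $\partial_{\xi_b}$, $\partial_{\pi_b}$. Using the block decomposition $\omega_{E_{\mathcal{W}}} = \pi^*\omega_{\mathcal{W}} + \omega_{\text{fib}} + \Omega_{\text{mix}}$ together with the concrete expressions for $K_{ia}, L_{ia}$ from the structure-function definitions, the pairings against $\partial_{\xi_b}$ and $\partial_{\pi_b}$ yield the canonical block, which isolates the leading Hamilton terms $Y_H^a = \partial H/\partial \pi_a$ and $Z_H^a = -\partial H/\partial \xi_a$ up to corrections linear in $K,L$, while the pairing against $\partial_{x^j}$ gives $X_H^i = \omega^{ij}_{E_{\mathcal{W}}}\bigl(\partial_j H - Y_H^a K_{ja} - Z_H^a L_{ja}\bigr)$. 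Substituting the base-direction components back into the fiber equations then reproduces exactly the claimed correction terms $\omega^{ij}_{E_{\mathcal{W}}} (\partial_j H) K_{ia}$ and $\omega^{ij}_{E_{\mathcal{W}}} (\partial_j H) L_{ia}$, and the formal iteration closes because within $\mathcal{W}_{\text{safe}}$ the structure functions are uniformly bounded, so that a Neumann-series inversion of the full symplectic matrix converges.

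Finally, the structural properties follow from general principles: preservation of $\omega_{E_{\mathcal{W}}}$ is Cartan's formula $\mathcal{L}_{X_H}\omega_{E_{\mathcal{W}}} = d\iota_{X_H}\omega_{E_{\mathcal{W}}} + \iota_{X_H} d\omega_{E_{\mathcal{W}}} = d(dH) + 0 = 0$, using closedness established in the preceding lemma; compatibility with the horizontal--vertical decomposition follows because $\omega_{E_{\mathcal{W}}}$ was built to respect the connection $\nabla$; and smooth degeneration of $X_H$ near $\partial \mathcal{W}_{\text{safe}}$ follows from the smoothness of the cutoff $\chi$ and the uniform $L^\infty$ bounds on the structure functions $C, D, E, F$ on $\overline{\mathcal{W}}$. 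The main obstacle I anticipate is the bookkeeping in the middle step: distinguishing $\omega^{ij}_{E_{\mathcal{W}}}$ (the inverse of the full, mixed symplectic matrix) from $\omega^{ij}_{\mathcal{W}}$ (the inverse of the base block) and showing that back-substitution terminates in precisely the stated form rather than spawning an open-ended cascade of higher-order corrections. This will require a contraction-type argument exploiting the smallness of $\|K\|,\|L\|$ uniformly on $\mathcal{W}_{\text{safe}}$ afforded by the curvature-boundedness corollary, together with the fact that the zeroth-order block is already canonical.
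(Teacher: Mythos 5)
Your proposal is correct and follows essentially the same route the paper intends: the paper states this corollary without a separate proof, treating it as an immediate consequence of the preceding non-degeneracy theorem via the musical isomorphism $X \mapsto \iota_X\omega_{E_{\mathcal{W}}}$, followed by the same coordinate expansion of $\iota_{X_H}\omega_{E_{\mathcal{W}}} = dH$ against the block decomposition $\pi^*\omega_{\mathcal{W}} + \omega_{\text{fib}} + \Omega_{\text{mix}}$. Your added care about distinguishing the full inverse $\omega^{ij}_{E_{\mathcal{W}}}$ from the base block and closing the back-substitution via boundedness of the structure functions on $\mathcal{W}_{\text{safe}}$ goes beyond the paper's level of detail and is a legitimate refinement rather than a deviation.
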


\begin{remark}[Technical Points and Global Perspective of Localized Theory]
The localized theoretical framework constructed in this section has the following key characteristics:

\textbf{"Local diagonal principle" of index pairing:} In structure function: \\
$D_{iab} = \langle R^{\Upsilon}(H_i, V_b)U_a, \epsilon^b \rangle$, index $b$ appears repeatedly in $V_b$ and $\epsilon^b$ but is \textit{not summed}. This local diagonal pairing is rigorously effective within working region $\mathcal{W}$, ensuring correct matching between directions and dual directions in fiber bundle geometry, while avoiding pairing singularities near singular points.

\textbf{Hierarchy and boundary handling of geometric structure:} Base manifold coordinates $(q^\alpha, p_\alpha) \in T^*\mathcal{W}$ and fiber coordinates $(\xi_a, \pi_a) \in T^*_{h(x)}Y$ form different geometric hierarchies within $\mathcal{W}$ through horizontal-vertical decomposition of observation-adaptive connection. Coordinate transformations and metrics maintain continuity on closure $\overline{\mathcal{W}}$, remain well-defined at boundary $\partial \mathcal{W}$, while controlled degeneration mechanism ensures basic geometric hierarchical structure is not destroyed.

\textbf{Local unification of topology and geometry:} Existence of symplectic structure on $\mathcal{W}$ is guaranteed through local characteristic class theory, observation-adaptive connection geometry provides specific symplectic form construction, integrability condition $d\omega_{E_{\mathcal{W}}} = 0$ ensures closedness, and cutoff functions with degeneration mechanisms implement boundary handling. This enables the theory to simultaneously satisfy mathematical rigor, constructive realizability, and engineering practicality.

\textbf{Advantages for practical applications:} This framework maintains complete mathematical rigor within $\mathcal{W}$, boundedness of all geometric quantities guarantees computational feasibility, working region restrictions conform to engineering practical requirements, and boundary handling mechanisms naturally provide safety control interfaces.

\textbf{Completeness and limitations of theory:} Current theory establishes complete Hamiltonian geometric framework within $\mathcal{W}$, but must acknowledge its limitation of not covering global behavior of singular points. Future development directions include extension through working region gluing, establishment of singular point theory, study of global topological properties, and deepening analysis of nonlinear boundary effects.
\end{remark}

\subsection{Mathematical Analysis of Observation-Constrained Systems}

Based on observation fiber bundle symplectic structure theory, we analyze implementation of constrained dynamics in fiber bundle settings.

\begin{definition}[Observation-Constrained Hamiltonian System]
Observation-constrained Hamiltonian system is defined on fiber bundle $(E, \omega_E)$, where symplectic form:
$$\omega_E = \pi^*\omega_M + \omega_{fib} + \Omega_{curv}$$
System is described by following equations:
\begin{align*}
\dot{x}^i &= \{x^i, H\}_E + \lambda\{x^i, \Phi\}_E \\
\dot{\xi}_a &= \{\xi_a, H\}_E + \lambda\{\xi_a, \Phi\}_E \\
\dot{\pi}_a &= \{\pi_a, H\}_E + \lambda\{\pi_a, \Phi\}_E \\
0 &= \Phi(x, \xi, \pi)
\end{align*}
where Poisson bracket $\{·,·\}_E$ is defined by inverse of $\omega_E$.
\end{definition}

\begin{proposition}[Specific Calculation of Fiber Bundle Poisson Brackets]
In local coordinates $(x^i, \xi_a, \pi_a)$, fiber bundle Poisson brackets are:
\begin{align*}
\{x^i, x^j\}_E &= \omega_M^{ij}(x) \\
\{x^i, \xi_a\}_E &= K_{ia}(x,\xi,\pi) = \sum_b C_{iab}(x)\xi_b + \sum_b D_{iab}(x)\pi_b \\
\{x^i, \pi_a\}_E &= L_{ia}(x,\xi,\pi) = \sum_b E_{iab}(x)\xi_b + \sum_b F_{iab}(x)\pi_b \\
\{\xi_a, \xi_b\}_E &= 0 \\
\{\xi_a, \pi_b\}_E &= \delta_a^b \\
\{\pi_a, \pi_b\}_E &= 0
\end{align*}
where structure functions $C_{iab}, D_{iab}, E_{iab}, F_{iab}$ are determined by curvature of observation-adaptive connection, see \ref{subsec:sym_fiber}.
\end{proposition}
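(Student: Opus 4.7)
The plan is to derive the bracket formulas directly by inverting the symplectic form matrix in the local coordinates $(x^i, \xi_a, \pi_a)$. Since the Poisson tensor is the matrix inverse of the symplectic form matrix, with $\{u^I, u^J\}_E = (\omega_E^{-1})^{IJ}$ for any coordinate functions $u^I, u^J$, the entire proof reduces to a block matrix computation using the explicit decomposition $\omega_E = \pi^*\omega_M + \omega_{fib} + \Omega_{mix}$ already constructed in Section~\ref{subsec:sym_fiber}.

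First, I would write the symplectic form matrix in block form as
\[
\Omega_E = \begin{pmatrix} \omega^M_{ij}(x) & K_{ia} & L_{ia} \\ -K_{ja} & 0 & \delta_a^b \\ -L_{ja} & -\delta_b^a & 0 \end{pmatrix},
\]
organized so that the base coordinates $x^i$ occupy the first block and the fiber coordinates $(\xi_a, \pi_a)$ occupy the second. The fiber block is the canonical symplectic matrix $J = \bigl(\begin{smallmatrix} 0 & I_k \\ -I_k & 0 \end{smallmatrix}\bigr)$, whose inverse is $-J$, giving immediately $\{\xi_a, \xi_b\}_E = 0$, $\{\xi_a, \pi_b\}_E = \delta_a^b$, and $\{\pi_a, \pi_b\}_E = 0$ in the absence of mixing. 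The base block inverts to $(\omega^M)^{-1} = (\omega_M^{ij})$, the standard Poisson tensor on $M$.

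Second, I would invoke the block Schur complement formula for the full inverse, which produces corrections in both the base-base and fiber-fiber sectors proportional to $MJM^T$, where $M$ denotes the mixing block $(K, L)$. The crucial observation is that, by the explicit form of the structure functions $C_{iab}, D_{iab}, E_{iab}, F_{iab}$ established in the definition of structure functions and the linearity $K_{ia}, L_{ia} = O(|\xi| + |\pi|)$ near the zero section, the off-diagonal entries of $\Omega_E^{-1}$ expand as $(\Omega_E^{-1})^{i,a}_\xi = K_{ia}(x,\xi,\pi) + O(|\xi|^2 + |\pi|^2)$ and similarly for $L_{ia}$. Restricting attention to the safety subdomain $\mathcal{W}_{\text{safe}}$ and using the boundedness of all structure functions, one reads off the stated brackets directly from the entries of $\Omega_E^{-1}$.

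The main obstacle is the consistency of the mixed brackets with the closedness and non-degeneracy verified in the previous lemmas. Specifically, one must confirm that the Schur-complement corrections to the pure-fiber block vanish to the order claimed by the proposition; this relies on the symmetry relations $C_{iab} = C_{iba}$, $F_{iab} = F_{iba}$, and $D_{iab} = E_{iba}$ that were already exploited to prove $d\omega_E = 0$. These same symmetries force the correction terms $J^{-1}M^T S^{-1} M J^{-1}$ in the fiber block to be of order $|(\xi,\pi)|^2$, leaving the leading-order canonical brackets intact. Verifying this is the only non-trivial algebraic step; once it is done, the six bracket formulas follow by inspection of the corresponding entries of $\Omega_E^{-1}$ and the Jacobi identity is automatic from $d\omega_E = 0$.
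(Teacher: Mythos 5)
The inversion route you chose is the natural one (the paper in fact states this proposition without any proof, merely pointing back to the structure-function definitions), but your key step is not correct. Writing $\Omega_E=\begin{pmatrix} A & M\\ -M^{T} & J\end{pmatrix}$ with $A=(\omega^{M}_{ij})$, $M=(K\,|\,L)$ and $J=\bigl(\begin{smallmatrix}0 & I_k\\ -I_k & 0\end{smallmatrix}\bigr)$, the block inverse is
\begin{equation*}
\Omega_E^{-1}=\begin{pmatrix} S^{-1} & -S^{-1}MJ^{-1}\\ J^{-1}M^{T}S^{-1} & J^{-1}-J^{-1}M^{T}S^{-1}MJ^{-1}\end{pmatrix},\qquad S=A+MJ^{-1}M^{T},
\end{equation*}
and since $J^{-1}=-J$ one has $MJ^{-1}=(L\,|\,-K)$, hence the mixed block is $(-S^{-1}L\,|\,S^{-1}K)$. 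So what the inversion actually yields, to leading order in $(\xi,\pi)$, is $\{x^i,\xi_a\}_E=-\omega_M^{ij}L_{ja}+\cdots$ and $\{x^i,\pi_a\}_E=+\omega_M^{ij}K_{ja}+\cdots$: the roles of $K$ and $L$ are swapped, a sign appears, and everything is contracted with $\omega_M^{-1}$. This is not the statement $\{x^i,\xi_a\}_E=K_{ia}$, $\{x^i,\pi_a\}_E=L_{ia}$, and because $K_{ia},L_{ia}$ are themselves first order in $(\xi,\pi)$, the discrepancy sits at exactly the order you are trying to compute, so it cannot be hidden in an $O(|\xi|^2+|\pi|^2)$ remainder. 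To get the proposition as stated you would need additional compatibility identities relating $K$, $L$ and $\omega_M$ (schematically $L=\omega_M^{-1}K$ and $K=-\omega_M^{-1}L$), which neither your argument nor the paper's definition of the structure functions establishes.

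The second weak point is your treatment of the "main obstacle." The correction $J^{-1}M^{T}S^{-1}MJ^{-1}$ to the fiber block is quadratic in $(\xi,\pi)$ simply because $M$ is linear in $(\xi,\pi)$; the symmetry relations $C_{iab}=C_{iba}$, $F_{iab}=F_{iba}$, $D_{iab}=E_{iba}$ play no role in this estimate and certainly do not make the correction vanish identically. Consequently your computation only gives $\{\xi_a,\pi_b\}_E=\delta_a^b+O(|(\xi,\pi)|^2)$ and $\{x^i,x^j\}_E=\omega_M^{ij}+O(|(\xi,\pi)|^2)$, whereas the proposition asserts exact equalities. A correct write-up must either (i) prove the missing compatibility relations and show the Schur corrections vanish (which would require input beyond what the paper provides), or (ii) honestly restate the result as an expansion around the zero section $\xi=\pi=0$, in which case the mixed brackets are determined only at linear order and the formulas must carry the $\omega_M^{-1}$-contracted, swapped form derived above rather than the raw coefficients $K_{ia}$, $L_{ia}$.
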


\begin{definition}[Fiber Bundle Gradient Operator and Metric]
Gradient operator $\nabla_E$ on fiber bundle is defined as vector field:
$$\nabla_E\Phi = \sum_{i,j} g_M^{ij}(x)\frac{\partial\Phi}{\partial x^j}\frac{\partial}{\partial x^i} + \sum_{a,b} \rho^{ab}(x)\frac{\partial\Phi}{\partial \xi_b}\frac{\partial}{\partial \xi_a} + \sum_a \frac{\partial\Phi}{\partial \pi_a}\frac{\partial}{\partial \pi_a}$$

Corresponding gradient norm squared is defined as:
$$\|\nabla_E\Phi\|^2 = \sum_{i,j} g_M^{ij}(x)\frac{\partial\Phi}{\partial x^i}\frac{\partial\Phi}{\partial x^j} + \sum_{a,b} \rho^{ab}(x)\frac{\partial\Phi}{\partial \xi_a}\frac{\partial\Phi}{\partial \xi_b} + \sum_a \left(\frac{\partial\Phi}{\partial \pi_a}\right)^2$$
where $g_M^{ij}(x)$ is inverse matrix of base manifold metric, $\rho^{ab}(x)$ is inverse matrix of fiber metric, satisfying metric compatibility:
$$\nabla_i\rho_{ab} = \partial_i\rho_{ab} - \Gamma^c_{ia}\rho_{cb} - \Gamma^c_{ib}\rho_{ac} = 0$$
where $\Gamma^c_{ia}$ are fiber connection coefficients.
\end{definition}

\begin{theorem}[Dirac Classification of Observation Constraints]
Let $\Phi: E \to \mathbb{R}$ be observation constraint function. Then:
\begin{itemize}
\item \textbf{First-class observation constraint:} $\{H, \Phi\}_E \approx 0$ (weakly equal to zero on constraint surface)
\item \textbf{Second-class observation constraint:} $\{H, \Phi\}_E \not\approx 0$ (non-zero on constraint surface)
\end{itemize}
where Poisson bracket expands as:
$$\{H, \Phi\}_E = \sum_{i,j} \frac{\partial H}{\partial x^i}\frac{\partial \Phi}{\partial x^j}\omega_M^{ij} + \sum_{i,a} \frac{\partial H}{\partial x^i}\frac{\partial \Phi}{\partial \xi_a}K_{ia} + \sum_{i,a} \frac{\partial H}{\partial x^i}\frac{\partial \Phi}{\partial \pi_a}L_{ia} + \sum_a \frac{\partial H}{\partial \xi_a}\frac{\partial \Phi}{\partial \pi_a} - \sum_a \frac{\partial H}{\partial \pi_a}\frac{\partial \Phi}{\partial \xi_a}$$
where all repeated indices follow Einstein summation convention.
\end{theorem}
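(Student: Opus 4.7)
The plan is to treat this as a direct application of the Poisson-bracket structure established in the preceding Proposition, combined with the standard Dirac criterion for constraint classification. The classification itself is a definition of terminology (first-class versus second-class), so the substantive content of the theorem is the explicit coordinate expansion of $\{H,\Phi\}_E$ in terms of the structure functions $K_{ia}, L_{ia}$ of the observation-adaptive connection. My proof would therefore be organized as (i) recall the classical Dirac dichotomy and transport it to the fiber-bundle Poisson algebra, (ii) compute the bracket $\{H,\Phi\}_E$ by bilinearly expanding both arguments in the local coordinate basis $(x^i,\xi_a,\pi_a)$, and (iii) verify that the resulting expression is invariantly defined and agrees with the claimed formula on and off the constraint surface.

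First I would write $H$ and $\Phi$ as smooth functions on $E_{\mathcal{W}}|_{\mathcal{W}_{\text{safe}}}$ and use the derivation property of the Poisson bracket to reduce $\{H,\Phi\}_E$ to a sum of elementary brackets of coordinate functions weighted by partial derivatives. This gives schematically
\[
\{H,\Phi\}_E \;=\; \frac{\partial H}{\partial z^A}\frac{\partial \Phi}{\partial z^B}\,\{z^A,z^B\}_E,
\]
where $z^A$ ranges over $(x^i,\xi_a,\pi_a)$. Substituting the six elementary brackets computed in the preceding Proposition, and using antisymmetry to combine the $(x,\xi)$ and $(\xi,x)$ as well as $(x,\pi)$ and $(\pi,x)$ contributions, yields exactly the five-term expansion in the statement, with the pure-fiber term $\partial_{\xi_a}H\,\partial_{\pi_a}\Phi - \partial_{\pi_a}H\,\partial_{\xi_a}\Phi$ arising from $\{\xi_a,\pi_b\}_E = \delta_a^b$, and the mixed terms carrying the curvature-induced structure functions $K_{ia}, L_{ia}$.

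Next I would attach the Dirac classification. The constraint surface is $\Sigma = \Phi^{-1}(0) \subset E$, and weak equality $f \approx g$ means $f|_\Sigma = g|_\Sigma$. Calling $\Phi$ first-class if $\{H,\Phi\}_E$ vanishes on $\Sigma$ (equivalently, if $\{H,\Phi\}_E = \mu(x,\xi,\pi)\Phi$ for some smooth $\mu$) and second-class otherwise is the literal transcription of Dirac's definition, so the only thing to check is that this notion is well-posed in our setting: this follows because $\omega_E$ is nondegenerate on $\mathcal{W}_{\text{safe}}$ (by the Non-degeneracy Theorem above), so $\{\cdot,\cdot\}_E$ is a genuine Poisson bracket, and because $\Sigma$ is a regular submanifold under the assumption $d\Phi \neq 0$ carried over from the regularity axiom of the constraint potential.

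The main obstacle, and the only genuinely non-mechanical point, is ensuring coordinate invariance of the expansion: the individual terms involving $K_{ia}$ and $L_{ia}$ depend on the choice of horizontal lift furnished by the observation-adaptive connection, and one must check that the combination appearing in $\{H,\Phi\}_E$ is independent of this choice. I would handle this by invoking the symmetry relations $C_{iab}=C_{iba}$, $F_{iab}=F_{iba}$, $D_{iab}=E_{iba}$ established in the closedness lemma for $\omega_E$, which guarantee that reshuffling between horizontal and vertical splittings leaves the total bracket unchanged. A brief remark would then note that this invariance is precisely what lets the first-class/second-class dichotomy be attached to the geometric object $\Phi$ rather than to its coordinate representative, completing the justification that the classification is intrinsic to the observation-constrained Hamiltonian system $(E,\omega_E,H,\Phi)$.
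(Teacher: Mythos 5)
Your overall route---expand $\{H,\Phi\}_E$ bilinearly over the elementary coordinate brackets of the preceding Proposition, then attach Dirac's first/second-class dichotomy as terminology, checking well-posedness via non-degeneracy of $\omega_E$ on $\mathcal{W}_{\text{safe}}$ and regularity of $\Phi^{-1}(0)$---is exactly the argument the paper gestures at in its one-line proof, and your remarks on well-posedness and intrinsicness go beyond what the paper records.

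However, one step fails as described. In step (ii) you claim that ``using antisymmetry to combine the $(x,\xi)$ and $(\xi,x)$ as well as $(x,\pi)$ and $(\pi,x)$ contributions yields exactly the five-term expansion in the statement.'' It does not: the Leibniz expansion $\{H,\Phi\}_E=\sum_{A,B}\partial_{z^A}H\,\partial_{z^B}\Phi\,\{z^A,z^B\}_E$ produces, besides the five displayed families, the two additional families
\[
-\sum_{i,a}\frac{\partial H}{\partial \xi_a}\frac{\partial \Phi}{\partial x^i}\,K_{ia}
\;-\;\sum_{i,a}\frac{\partial H}{\partial \pi_a}\frac{\partial \Phi}{\partial x^i}\,L_{ia},
\]
coming from $\{\xi_a,x^i\}_E=-K_{ia}$ and $\{\pi_a,x^i\}_E=-L_{ia}$. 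Antisymmetry of the elementary brackets does not merge these with the $(x,\xi)$ and $(x,\pi)$ terms, because they multiply different pairs of partial derivatives. Carried out honestly, your computation gives a seven-family expansion, so you must either retain these terms (and note that the theorem's displayed formula is incomplete as written---it is not antisymmetric under $H\leftrightarrow\Phi$) or state an extra hypothesis under which they vanish (e.g.\ evaluation at the fiber center where $K_{ia}=L_{ia}=0$, or $H$ independent of the fiber coordinates); neither is currently in your proposal. The classification half of your argument is unaffected, since first versus second class uses only the restriction of $\{H,\Phi\}_E$ to the constraint surface, not the coordinate formula. Finally, your invariance argument via $C_{iab}=C_{iba}$, $F_{iab}=F_{iba}$, $D_{iab}=E_{iba}$ is not really what is needed: those symmetries constrain the structure functions themselves, not the dependence of individual terms on the horizontal--vertical splitting; the cleaner observation is simply that $\{H,\Phi\}_E$ is defined intrinsically by $\omega_E$ and hence independent of any coordinate expansion.
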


\begin{proof}
This is direct extension of Dirac constraint classification theory to observation fiber bundles. Classification is based on Poisson bracket properties of constraint function with Hamiltonian function under fiber bundle symplectic structure. □
\end{proof}

\begin{proposition}[Constraint Preservation Condition]
Preserving $\Phi(x,\xi,\pi)=0$ along observation-constrained system trajectory requires:
$$\frac{d}{dt}\Phi = \{H,\Phi\}_E + \lambda\{\Phi,\Phi\}_E = 0$$
Since $\{\Phi,\Phi\}_E = 0$ (antisymmetry of Poisson bracket), we get:
$$\{H,\Phi\}_E = 0$$
This is precisely the defining condition for first-class constraints.
\end{proposition}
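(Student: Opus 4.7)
The plan is to compute $\frac{d}{dt}\Phi$ along trajectories of the observation-constrained system and reduce the preservation requirement to a Poisson-bracket identity. The calculation is short because the bracket $\{\cdot,\cdot\}_E$ inherited from the constructed symplectic form $\omega_E$ (built in Section~\ref{subsec:sym_fiber}) is a genuine Poisson bracket, so the chain rule collapses cleanly into bracket notation.

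First I would expand $\frac{d}{dt}\Phi(x(t),\xi(t),\pi(t))$ by the chain rule and substitute the equations of motion
\[
\dot x^i=\{x^i,H\}_E+\lambda\{x^i,\Phi\}_E,\qquad \dot\xi_a=\{\xi_a,H\}_E+\lambda\{\xi_a,\Phi\}_E,\qquad \dot\pi_a=\{\pi_a,H\}_E+\lambda\{\pi_a,\Phi\}_E
\]
from the definition of observation-constrained Hamiltonian system. Using the Leibniz rule of the Poisson bracket in its first slot together with linearity, the resulting sum over coordinate directions reassembles into
\[
\frac{d\Phi}{dt}=\{H,\Phi\}_E+\lambda\{\Phi,\Phi\}_E,
\]
so preservation along the trajectory is equivalent to the vanishing of this expression.

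The second step is to apply antisymmetry of the Poisson bracket, which yields $\{\Phi,\Phi\}_E=-\{\Phi,\Phi\}_E$, hence $\{\Phi,\Phi\}_E\equiv 0$ irrespective of the multiplier $\lambda$. The $\lambda$-term therefore drops out identically, and the preservation condition reduces exactly to $\{H,\Phi\}_E=0$, which is precisely the Dirac first-class criterion introduced in the classification theorem just above. The identification with ``first-class'' then follows by direct comparison.

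There is no genuine obstacle, only one point requiring care: the argument uses bilinearity, antisymmetry, and the Leibniz rule of $\{\cdot,\cdot\}_E$, which are automatic on the safety subdomain $\mathcal{W}_{\text{safe}}$ where $\omega_E$ has been shown non-degenerate and closed. I would state the proposition on $\mathcal{W}_{\text{safe}}$, so that these algebraic properties are directly inherited from $\omega_E$. Near the boundary layer the symplectic form is only controlled-degenerate, but since $\{\Phi,\Phi\}_E=0$ depends purely on antisymmetry (an algebraic identity preserved under any limit of skew forms), the conclusion extends continuously to $\partial\mathcal{W}$ with no further work.
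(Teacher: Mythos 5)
Your proposal is correct and follows essentially the same route as the paper: differentiate $\Phi$ along the flow defined by the observation-constrained equations of motion, reassemble the chain rule into bracket form, and kill the $\lambda$-term by antisymmetry, leaving $\{H,\Phi\}_E=0$ as the first-class condition. The only remark worth making is that the direct chain-rule computation naturally produces $\{\Phi,H\}_E$ rather than $\{H,\Phi\}_E$ (a sign-convention discrepancy already present in the paper's own statement), which is immaterial since the condition is the vanishing of the bracket; your added caveat about restricting to $\mathcal{W}_{\text{safe}}$ where $\omega_E$ is non-degenerate is a sensible refinement, not a deviation.
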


\subsubsection{First-Class Observation-Constrained Systems}

\begin{theorem}[Geometric Properties of First-Class Observation Constraints]
Let observation-constrained system satisfy first-class condition $\{H,\Phi\}_E = 0$, then on observation constraint manifold $\mathcal{M} = \{\Phi=0\} \subset E$, system preserves:
\begin{enumerate}
\item \textbf{Manifold invariance:} Trajectories are strictly restricted to $\mathcal{M}$
\item \textbf{Symplectic structure conservation:} Restricted symplectic form $\omega_E|_{\mathcal{M}}$ is preserved along trajectories  
\item \textbf{Energy conservation:} Observation Hamiltonian $H|_{\mathcal{M}}$ is strictly conserved
\item \textbf{Observation compatibility:} Observation map $h \circ \pi$ remains well-defined on constraint manifold
\end{enumerate}
\end{theorem}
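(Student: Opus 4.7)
The plan is to verify the four claims in sequence, each reducing to a standard Hamiltonian-geometric identity on the fiber bundle symplectic manifold $(E,\omega_E)$ constructed in Section~\ref{subsec:sym_fiber}. Throughout I would denote the constrained vector field by $\tilde X = X_H + \lambda X_\Phi$, with the sign convention $\iota_{X_f}\omega_E = -df$ for any Hamiltonian $f$, so that $\dot f = \{f,H\}_E + \lambda\{f,\Phi\}_E$ along the flow.

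First, for manifold invariance, I would differentiate the constraint along the flow: $\dot\Phi = \{\Phi,H\}_E + \lambda\{\Phi,\Phi\}_E$. Antisymmetry of the Poisson bracket kills the second term, and the first-class condition $\{H,\Phi\}_E \approx 0$ forces $\dot\Phi = 0$ on $\mathcal{M}$. Combined with the regularity hypothesis $d\Phi\neq 0$ on $\Phi^{-1}(0)$, which makes $\mathcal{M}$ a smooth codimension-one embedded submanifold, this shows trajectories initiated on $\mathcal{M}$ remain there, and simultaneously that $\tilde X$ is tangent to $\mathcal{M}$. Energy conservation then follows from the same calculation applied to $H$: $\dot H = \{H,H\}_E + \lambda\{H,\Phi\}_E = 0$ by antisymmetry and the first-class condition, so $H|_{\mathcal{M}}$ is constant along trajectories.

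For symplectic structure conservation, I would invoke Cartan's magic formula together with $d\omega_E = 0$ (established in the closedness lemma of Section~\ref{subsec:sym_fiber}):
\[
\mathcal{L}_{\tilde X}\omega_E \;=\; d\iota_{\tilde X}\omega_E + \iota_{\tilde X}d\omega_E \;=\; -d(dH + \lambda\, d\Phi) \;=\; -d\lambda \wedge d\Phi.
\]
Pulling back via the inclusion $\iota:\mathcal{M}\hookrightarrow E$ and using $\iota^*(d\Phi) = 0$ yields $\mathcal{L}_{\tilde X}(\iota^*\omega_E) = 0$, so the restricted form $\omega_E|_{\mathcal{M}}$ is preserved along the flow. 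Observation compatibility is then immediate: the bundle projection $\pi:E\to M$ restricts smoothly to $\pi|_{\mathcal{M}}:\mathcal{M}\to\mathcal{W}$, and composing with the smooth map $h:\mathcal{W}\to Y$ from Assumption~1 yields a well-defined smooth map $h\circ\pi|_{\mathcal{M}}:\mathcal{M}\to Y$.

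The principal obstacle is the symplectic step, specifically the treatment of the Lagrange multiplier $\lambda$. On the ambient space $E$, $\lambda$ is only implicitly determined by the constraint-preservation consistency condition, so $d\lambda$ is not a priori under control; the argument hinges on pulling back to $\mathcal{M}$, where the factor $d\Phi$ annihilates every tangent vector to the constraint surface. One must also acknowledge that $\iota^*\omega_E$ may be degenerate (presymplectic rather than symplectic), but this is harmless here since only the Lie-derivative identity, not non-degeneracy, is needed for the conservation statement. A secondary technical point concerns the boundary-layer region of $\mathcal{W}$, where $\omega_E$ is modified by the cutoff function $\chi$; there one restricts attention to the safety subdomain $\mathcal{W}_{\text{safe}}$ on which the non-degeneracy theorem applies, and the computation above is rigorous throughout.
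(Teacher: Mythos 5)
Your proposal is correct, and it in fact supplies more than the paper does: the paper states this theorem without a dedicated proof, relying only on the adjacent Constraint Preservation Proposition (the computation $\dot\Phi = \{H,\Phi\}_E + \lambda\{\Phi,\Phi\}_E = 0$) and the blanket remark that the Dirac classification extends directly to the fiber-bundle setting. Your argument follows exactly that implicit route — bracket antisymmetry plus the first-class condition for invariance and energy, Cartan's formula with the closedness $d\omega_E = 0$ and the pullback identity $\iota^*d\Phi = 0$ for the symplectic statement — while adding the details the paper omits, namely the tangency of $\tilde X = X_H + \lambda X_\Phi$ to $\mathcal{M}$ needed to commute restriction with the Lie derivative, the $d\lambda \wedge d\Phi$ term from a point-dependent multiplier, the possible degeneracy of $\iota^*\omega_E$, and the restriction to $\mathcal{W}_{\text{safe}}$ where the non-degeneracy theorem applies.
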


\subsubsection{Second-Class Observation-Constrained Systems}

When $\{H,\Phi\}_E \neq 0$, system cannot strictly remain on constraint surface, requiring modified theoretical treatment.

\begin{definition}[Regularized Treatment of Second-Class Constraints]
For second-class observation constraint $\{H,\Phi\}_E \neq 0$, introduce regularized Lagrange multiplier:
$$\lambda = -\frac{\{H,\Phi\}_E + \alpha\Phi}{\|\nabla_E\Phi\|^2 + \epsilon}$$
where $\alpha > 0$ is artificial dissipation parameter, $\epsilon > 0$ is numerical regularization parameter, $\|\nabla_E\Phi\|^2$ is gradient norm squared under fiber bundle metric (as defined above), all repeated indices follow Einstein summation convention.
\end{definition}

\begin{proposition}[Selection Criteria for Regularization Parameters]
Regularization parameter selection follows these criteria:
\begin{enumerate}
\item \textbf{Dissipation strength:} $\alpha \leq \alpha_{\max}$, where
$$\alpha_{\max} = \min\left(\frac{\mu^2}{\|\{H,\Phi\}_E\|}, \frac{1}{T_{\text{char}}}\right)$$
$\mu = \inf_{\mathcal{M}}\|\nabla_E\Phi\|$ is lower bound of constraint gradient, $T_{\text{char}}$ is characteristic time of system
\item \textbf{Numerical stability:} $\epsilon = O(\Delta^2)$, where $\Delta = \sup_{x \in M}\delta(x)$ is maximum observation uncertainty
\item \textbf{Time scale separation:} Require $\alpha \cdot T_{\text{char}} \gg 1$ to ensure rapid convergence of constraints
\end{enumerate}
\end{proposition}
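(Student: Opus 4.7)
The plan is to treat each of the three criteria as a sufficient condition for a distinct qualitative property of the regularized constraint evolution, and to prove each separately by a Lyapunov or singular-perturbation argument with $\Phi(t)$ along trajectories as the Lyapunov candidate. First I would compute, from the equations of motion of the observation-constrained system combined with the defining formula for $\lambda$, the closed-form ODE satisfied by $\Phi$; this yields a dissipative equation of Lyapunov type whose coefficients depend explicitly on $\alpha$, $\epsilon$, $\{H,\Phi\}_E$ and $\|\nabla_E\Phi\|^2$. Each of the three criteria then corresponds to one structural requirement on this ODE.

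For Criterion (1), the two bounds defining $\alpha_{\max}$ arise from two independent requirements. The first bound, $\mu^2/\|\{H,\Phi\}_E\|$, ensures that the built-in dissipation dominates the source term driven by $\{H,\Phi\}_E$: using the gradient lower bound $\|\nabla_E\Phi\|^2 \geq \mu^2$ on a neighborhood of $\mathcal{M}$ together with the explicit expansion of $\{H,\Phi\}_E$ provided by the Dirac classification theorem, I would show that $\dot\Phi + \alpha\Phi$ is a controlled perturbation precisely when $\alpha \leq \mu^2/\|\{H,\Phi\}_E\|$. The second bound, $1/T_{\text{char}}$, comes from requiring that the added dissipation not introduce spurious time scales faster than the Hamiltonian flow itself; I would formalize this by comparing the modified vector field with $X_H$ in a suitable operator norm on $\mathcal{W}_{\text{safe}}$. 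Taking the minimum of the two bounds gives the stated $\alpha_{\max}$.

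For Criterion (2), I would exploit the geometric structure of the fiber bundle established in Section \ref{subsec:local_connection}. Since $\Phi$ is stratified-proper with fibers of radius bounded by $\Delta$, the gradient norm $\|\nabla_E\Phi\|^2$ can approach zero on scales of order $\Delta^{-2}$ near the fiber extrema, so demanding that the sensitivity $\partial\lambda/\partial\Phi$ remain uniformly bounded forces $\epsilon$ to be no smaller than $O(\Delta^2)$, matching the regularization scale to the intrinsic observation-uncertainty scale. Criterion (3) then follows from a standard Tikhonov-type singular-perturbation argument applied to the same ODE, with small parameter $1/(\alpha T_{\text{char}})$ separating the fast constraint-enforcement dynamics from the slow Hamiltonian evolution; when $\alpha T_{\text{char}} \gg 1$, the constraint manifold $\mathcal{M}$ acts as an attracting slow manifold and $\Phi(t)$ decays exponentially on a time scale much shorter than $T_{\text{char}}$.

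The main obstacle will be making the residual in the Lyapunov equation for Criterion (1) sufficiently explicit, because the curvature mixing terms $K_{ia}$ and $L_{ia}$ in $\{H,\Phi\}_E$ couple base and fiber degrees of freedom through the structure functions $C_{iab}$, $D_{iab}$, $E_{iab}$, $F_{iab}$. To handle this I would combine the $L^\infty(\overline{\mathcal{W}})$ boundedness of the structure functions with the pointwise curvature bound from the precise-curvature-estimates corollary, obtaining a uniform residual estimate of order $\Delta^2$ on $\mathcal{W}_{\text{safe}}$. This estimate is compatible with the scaling chosen in Criterion (2) and closes the argument for all three criteria simultaneously.
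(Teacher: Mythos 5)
First, note that the paper itself offers no proof of this proposition: it is stated as a set of design prescriptions, and its only support comes from the adjacent results (the Properties of the Regularized Second-Class Constraint System and the Regularization Stability proposition, both also unproved). So your attempt cannot be measured against a paper proof, only against internal consistency, and there it has concrete problems. The most basic one is your opening step: if you derive the ODE for $\Phi$ literally from the stated equations of motion $\dot z = \{z,H\}_E + \lambda\{z,\Phi\}_E$, you get $\dot\Phi = \{\Phi,H\}_E + \lambda\{\Phi,\Phi\}_E = \{\Phi,H\}_E$, because $\{\Phi,\Phi\}_E = 0$ (the paper itself records this in the Constraint Preservation Condition). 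The multiplier, and hence $\alpha$ and $\epsilon$, never enter the evolution of $\Phi$ at all, so no ``dissipative equation of Lyapunov type whose coefficients depend on $\alpha$, $\epsilon$'' exists along that route. To obtain $\dot\Phi \approx -\alpha\Phi$ (which is what the Regularization Stability proposition and the energy-dissipation law $\dot H = -\alpha\Phi^2/(\|\nabla_E\Phi\|^2+\epsilon)$ implicitly presuppose) you must apply the correction force along the metric gradient $\nabla_E\Phi$ rather than along $X_\Phi$, i.e.\ modify the dynamics to a non-Hamiltonian stabilization of Baumgarte type; your plan never makes this move, and without it the whole Lyapunov computation collapses.

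Second, your interpretations of the individual criteria do not cohere. For criterion (1) you claim the bound $\alpha \le \mu^2/\|\{H,\Phi\}_E\|$ ``ensures that the built-in dissipation dominates the source term'' --- but an \emph{upper} bound on $\alpha$ weakens the dissipation; dominance of dissipation over the drive would require a lower bound on $\alpha$. The actual role an upper bound on $\alpha$ can play (consistent with the neighboring theorem) is to keep the symplectic deviation $\Delta\omega_E = O(\alpha)$ and the induced energy dissipation small, i.e.\ to keep the regularized flow close to the original Hamiltonian flow; your argument proves the wrong inequality direction. Moreover, you justify the second entry $1/T_{\text{char}}$ of $\alpha_{\max}$ by demanding the dissipative time scale be \emph{slower} than the Hamiltonian flow, and then justify criterion (3), $\alpha T_{\text{char}} \gg 1$, by a Tikhonov argument demanding it be \emph{much faster}; these two requirements are mutually exclusive (indeed criteria (1) and (3) of the statement itself are in tension, which any honest proof must confront rather than claim to ``close simultaneously''). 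Finally, your argument for criterion (2) relies on $\|\nabla_E\Phi\|^2$ degenerating at fiber scale, which contradicts the lower bound $\|\nabla_E\Phi\| \ge \mu > 0$ near $\mathcal{M}$ that you use in criterion (1) and that the Regularization Stability proposition assumes; the $\epsilon = O(\Delta^2)$ scaling is more defensibly a dimensional-matching choice ($\epsilon$ is added to a squared gradient over fibers of radius at most $\Delta$) than a consequence of bounding $\partial\lambda/\partial\Phi$, which is already bounded by $\alpha/\epsilon$ for any $\epsilon>0$.
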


\begin{theorem}[Properties of Regularized Second-Class Constraint System]
Regularized second-class observation-constrained system has:
\begin{enumerate}
\item \textbf{Asymptotic stability of constraints:} $\Phi(x(t),\xi(t)) \to 0$ as $t \to \infty$
\item \textbf{Modified symplectic structure:} Introduces controlled symplectic structure deviation $\Delta\omega_E = O(\alpha)$
\item \textbf{Modified conservation laws:} $\frac{dH}{dt} = -\alpha\frac{\Phi^2}{\|\nabla_E\Phi\|^2 + \epsilon} \leq 0$
\item \textbf{Observation consistency:} Preserves observation fiber bundle geometric structure
\end{enumerate}
\end{theorem}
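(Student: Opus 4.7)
My plan is to derive each of the four assertions from the specific algebraic design of the regularized multiplier $\lambda = -(\{H,\Phi\}_E+\alpha\Phi)/(\|\nabla_E\Phi\|^2+\epsilon)$, viewing it as a Baumgarte-type constraint-stabilization prescription. The key observation is that $\lambda$ is engineered so that, once substituted into the time-derivative identities for $\Phi$ and for $H$, the ``free'' contribution $\{H,\Phi\}_E$ cancels against the corresponding piece of the numerator, and only the designed damping, of order $\alpha\Phi$ in $\dot\Phi$ and of order $\alpha\Phi^2$ in $\dot H$, survives up to an $O(\epsilon)$ residue that is controlled uniformly on the safety region.

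First I would verify asymptotic stability and the energy dissipation identity together, since both originate from the same substitution. Working out $d\Phi/dt$ along the regularized flow and plugging in $\lambda$ yields, after cancellation of the $\{H,\Phi\}_E$ pieces, an evolution equation of the form $\dot\Phi = -\alpha\Phi\,\kappa(z)+O(\epsilon)$, where $\kappa(z) = \|\nabla_E\Phi\|^2/(\|\nabla_E\Phi\|^2+\epsilon) \in [\kappa_\ast,1]$ is bounded below by the lower gradient bound $\mu = \inf_{\mathcal M}\|\nabla_E\Phi\|>0$ provided by the parameter-selection proposition. A Gronwall comparison with $\dot u = -\alpha\kappa_\ast u$ then gives $|\Phi(t)|\le |\Phi(0)|e^{-\alpha\kappa_\ast t}+O(\epsilon/\alpha)$, i.e.\ exponential decay modulo a controllable regularization residue. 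Evaluating $dH/dt$ along the flow and performing the companion substitution, the remaining terms recombine into the claimed expression $-\alpha\Phi^2/(\|\nabla_E\Phi\|^2+\epsilon)$, with non-positivity manifest from the sign of the numerator.

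For the $O(\alpha)$ bound on the symplectic deviation I would decompose the regularized flow as the pure Hamiltonian flow of $H$ perturbed by the vector field $\lambda X_\Phi$. Because the Hamiltonian field satisfies $\mathcal L_{X_\Phi}\omega_E=0$, Cartan's formula yields $\mathcal L_{\lambda X_\Phi}\omega_E = d\lambda\wedge \iota_{X_\Phi}\omega_E = d\lambda\wedge d\Phi$; the parameter-selection bound $\alpha\le\mu^2/\|\{H,\Phi\}_E\|$ then controls $|\lambda|$ and hence $|d\lambda|$, and integrating along trajectories gives $\Delta\omega_E = O(\alpha)$. Observation consistency is a direct geometric verification: both $\Phi$ and $\nabla_E\Phi$ are constructed from the bundle data $(\pi,h,\delta,\rho)$, so the corrective vector field is compatible with the horizontal--vertical decomposition of $TE$, and the well-definedness lemma for the fiber bundle ensures the modified flow descends consistently under $\pi$ and under $h\circ\pi$.

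The hardest step is the first one. The Poisson bracket $\{\cdot,\cdot\}_E$ carries curvature-mixing contributions $K_{ia}, L_{ia}$ inherited from the observation-adaptive connection of Section~\ref{subsec:sym_fiber}, whereas $\|\nabla_E\Phi\|^2$ in the denominator of $\lambda$ is a purely metric quantity. Making the cancellation that produces the clean $-\alpha\Phi$ law rigorous requires showing that these mixing coefficients enter the Lyapunov estimate only at subleading order; this depends on the uniform $L^\infty$ bounds on the structure functions from the curvature control corollary and a careful tracking of how they scale against $\mu$, $\alpha$, and $\epsilon$ so that the exponential decay survives throughout the whole safety region.
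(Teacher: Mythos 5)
For the record, the paper itself states this theorem without any proof, so there is no argument of the authors to compare against; your proposal has to stand on its own, and as written it does not. The central gap is in your first (and admittedly "hardest") step: under the paper's stated regularized dynamics the correction enters every equation of motion through $\lambda\{\cdot,\Phi\}_E$, so along the flow $\dot\Phi=\{\Phi,H\}_E+\lambda\{\Phi,\Phi\}_E=\{\Phi,H\}_E$ — the multiplier drops out of $\dot\Phi$ identically because $\{\Phi,\Phi\}_E=0$ (this is literally the paper's own Constraint Preservation proposition). Hence the cancellation you rely on, producing $\dot\Phi=-\alpha\Phi\,\kappa+O(\epsilon)$, cannot occur: $X_\Phi$ is tangent to the level sets of $\Phi$ and no choice of $\lambda$ stabilizes a single constraint that way. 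To get any decay law you must first \emph{redefine} the correction as a transversal force, e.g.\ $\lambda\,\nabla_E\Phi$ (which is what the denominator $\|\nabla_E\Phi\|^2+\epsilon$ is silently pointing at), and you never state this; moreover, once you do, the decay rate comes out of order $\alpha$ rather than the $\alpha/(\mu^2+\epsilon)$ appearing in the paper's Regularization Stability proposition, so the bookkeeping is not a formality. The energy law has the same problem: under the bracket dynamics $\dot H=\lambda\{H,\Phi\}_E=-\bigl(\{H,\Phi\}_E^2+\alpha\Phi\{H,\Phi\}_E\bigr)/\bigl(\|\nabla_E\Phi\|^2+\epsilon\bigr)$, which is neither equal to $-\alpha\Phi^2/(\|\nabla_E\Phi\|^2+\epsilon)$ nor manifestly nonpositive; under the gradient correction one gets $\lambda\langle\nabla_EH,\nabla_E\Phi\rangle_E$, again not the claimed expression. "The remaining terms recombine" is asserted, not derived.

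Your treatment of item 2 exposes a further internal inconsistency: there you take the perturbing field to be $\lambda X_\Phi$ (so that Cartan gives $\mathcal{L}_{\lambda X_\Phi}\omega_E=d\lambda\wedge d\Phi$, which is correct), while items 1 and 3 implicitly require the non-Hamiltonian gradient correction — the two readings of the modified flow are incompatible, and you must commit to one and redo the other computations accordingly. Even granting $\lambda X_\Phi$, the bound is wrong as stated: $\lambda$ contains the $\alpha$-independent piece $-\{H,\Phi\}_E/(\|\nabla_E\Phi\|^2+\epsilon)$, so $d\lambda\wedge d\Phi$ is generically $O(1)$, not $O(\alpha)$; the $O(\alpha)$ statement only makes sense if the deviation is measured relative to the unregularized second-class (Dirac-projected) flow with multiplier $-\{H,\Phi\}_E/\|\nabla_E\Phi\|^2$, not relative to the pure Hamiltonian flow of $H$ as you propose (and "$|\lambda|$ bounded hence $|d\lambda|$ bounded" needs $C^1$ bounds, not just the parameter-selection inequality). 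Your remarks on the curvature-mixing terms and on item 4 are reasonable in spirit, but the proof cannot be repaired by tracking those constants: it first requires an explicit, consistent definition of the regularized vector field, after which each of the four claims must be recomputed rather than obtained by the cancellations you describe.
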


\begin{proposition}[Regularization Stability]
Suppose $\|\nabla_E\Phi\| \geq \mu > 0$ in neighborhood of constraint surface, then constraint violation decays exponentially:
$$|\Phi(t)| \leq |\Phi(0)|e^{-\alpha t/(\mu^2 + \epsilon)}$$
Decay rate is proportional to regularization parameter $\alpha$ and inversely proportional to constraint gradient lower bound $\mu^2$.
\end{proposition}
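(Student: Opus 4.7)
The plan is to construct a Lyapunov functional $V(t) := \tfrac{1}{2}\Phi(x(t),\xi(t),\pi(t))^2$ and establish a scalar differential inequality of the form $\dot V \le -\tfrac{2\alpha}{\mu^2+\epsilon}V$, from which the claimed exponential decay of $|\Phi|$ follows by Gronwall's lemma and taking square roots. This reduces the question to obtaining a closed-form expression for $\dot\Phi$ along the regularized flow and bounding it in terms of $\Phi$ alone.

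First I would differentiate $\Phi$ along the equations of motion of the observation-constrained system. By the chain rule and the fiber-bundle Poisson brackets of Proposition~11, $\dot\Phi = \{H,\Phi\}_E + \lambda\{\Phi,\Phi\}_E$, and the Poisson-bracket contribution from the multiplier vanishes identically by antisymmetry, $\{\Phi,\Phi\}_E = 0$. The regularization formula
$$\lambda = -\frac{\{H,\Phi\}_E + \alpha\Phi}{\|\nabla_E\Phi\|^2+\epsilon}$$
is therefore to be read as enforcing a Baumgarte-type constraint-stabilization term that couples to the \emph{metric} gradient $\nabla_E\Phi$ (rather than to $X_\Phi$), producing an effective evolution law of the form
$$\dot\Phi \;=\; \frac{\epsilon\,\{H,\Phi\}_E \;-\; \alpha\Phi\,\|\nabla_E\Phi\|^2}{\|\nabla_E\Phi\|^2+\epsilon}$$
once the stabilization is incorporated. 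Spelling this identification out rigorously in the $(x^i,\xi_a,\pi_a)$ coordinates requires invoking the structure functions $C_{iab},D_{iab},E_{iab},F_{iab}$ of Section~\ref{subsec:sym_fiber} and using their uniform $L^\infty$ bounds on $\overline{\mathcal{W}}$ to control the residual between the Poisson dynamics and the metric gradient flow.

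Second I would multiply through by $\Phi$, exploit the hypothesis $\|\nabla_E\Phi\| \ge \mu$ on a neighborhood of the constraint surface, and absorb the drift term. On that neighborhood $\|\nabla_E\Phi\|^2+\epsilon \ge \mu^2+\epsilon$, while the quotient $\|\nabla_E\Phi\|^2/(\|\nabla_E\Phi\|^2+\epsilon)$ is bounded below by $\mu^2/(\mu^2+\epsilon)$; combining these with a Cauchy--Schwarz bound on $\Phi\,\{H,\Phi\}_E$ and absorbing the drift into the dominant dissipative term yields
$$\frac{d}{dt}\!\left(\tfrac{1}{2}\Phi^2\right) \;\le\; -\,\frac{\alpha}{\mu^2+\epsilon}\,\Phi^2.$$
Gronwall's lemma then gives $V(t) \le V(0)\,e^{-2\alpha t/(\mu^2+\epsilon)}$, and taking square roots produces the stated bound.

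The main obstacle I anticipate is the first step: producing an unambiguous scalar identity for $\dot\Phi$ in closed form. The multiplier $\lambda$ as defined couples through the Poisson bracket $\{\cdot,\Phi\}_E$ in the equations of motion, yet its defining formula is normalized by the metric gradient norm $\|\nabla_E\Phi\|^2$, and these two objects are algebraically distinct on $(E,\omega_E)$. One must therefore specify how the stabilization term enters the dynamics (either by reinterpreting $\lambda\{\cdot,\Phi\}_E$ as $\lambda\,\nabla_E\Phi$ in the gradient direction, or by augmenting the flow with an explicit dissipative term along $\nabla_E\Phi$) and verify that the two conventions agree on the constraint surface up to errors that are controlled by the bounded structure functions. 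A secondary concern is forward invariance of the neighborhood $\{\|\nabla_E\Phi\|\ge\mu\}$: the differential inequality only applies so long as the trajectory stays inside this set, so a short bootstrap using continuity of $\nabla_E\Phi$ and the monotone decay of $V$ is needed to ensure persistence for all $t\ge 0$, at least for sufficiently small initial violation $|\Phi(0)|$.
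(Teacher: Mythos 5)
The paper states this proposition without a proof, so there is nothing to compare against; judged on its own terms, your proposal has a genuine gap in the second step. Your reconstruction of the dynamics (letting the stabilization couple through $\nabla_E\Phi$, since $\lambda\{\Phi,\Phi\}_E=0$ makes the multiplier invisible to $\dot\Phi$ through the Poisson channel) is a fair reading of the paper's intent and you flag the ambiguity honestly. But once you adopt the effective law
$$\dot\Phi \;=\; \frac{\epsilon\,\{H,\Phi\}_E-\alpha\Phi\,\|\nabla_E\Phi\|^2}{\|\nabla_E\Phi\|^2+\epsilon},$$
the drift term cannot be "absorbed" as you claim. For a genuinely second-class constraint, $\{H,\Phi\}_E$ does \emph{not} vanish on $\{\Phi=0\}$ — that is the defining property — so near the constraint surface the term $\epsilon\,\Phi\{H,\Phi\}_E$ is of order $\epsilon|\Phi|\sim\sqrt{V}$, while the dissipative term is of order $V$. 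Cauchy--Schwarz/Young can only trade the drift against the dissipation at the price of a constant remainder, yielding $\dot V\le -cV+C\epsilon^2$ and hence exponential convergence into an $O(\epsilon)$ residual band (practical stability), not $|\Phi(t)|\le|\Phi(0)|e^{-ct}$. Indeed the stated bound is falsified under your own effective law by taking $\Phi(0)=0$ with $\{H,\Phi\}_E\neq0$: then $\dot\Phi(0)=\epsilon\{H,\Phi\}_E/(\|\nabla_E\Phi\|^2+\epsilon)\neq0$, so $|\Phi(t)|>0=|\Phi(0)|$ immediately. A correct argument must either make the drift vanish exactly — e.g.\ have the stabilizing term couple through $\|\nabla_E\Phi\|^2+\epsilon$ so that $\dot\Phi=-\alpha\Phi$, which gives rate $\alpha$ rather than $\alpha/(\mu^2+\epsilon)$ — or weaken the conclusion to ultimate boundedness.

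There is also a constant mismatch even if the drift is ignored: your own estimates give the quotient bound $\|\nabla_E\Phi\|^2/(\|\nabla_E\Phi\|^2+\epsilon)\ge\mu^2/(\mu^2+\epsilon)$ (the map $t\mapsto t/(t+\epsilon)$ is increasing), hence $\frac{d}{dt}\bigl(\tfrac12\Phi^2\bigr)\le-\frac{\alpha\mu^2}{\mu^2+\epsilon}\Phi^2$ and decay rate $\alpha\mu^2/(\mu^2+\epsilon)$, not the $\alpha/(\mu^2+\epsilon)$ appearing in your final display and in the proposition; for $\mu<1$ your derivation proves strictly less than what is asserted. The hypothesis $\|\nabla_E\Phi\|\ge\mu$ simply cannot produce $1/(\mu^2+\epsilon)$ as a lower bound on the feedback gain, so to reach the proposition's literal exponent you would need either $\mu$ to be an upper bound on the gradient or a different normalization of $\lambda$. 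The forward-invariance bootstrap you mention at the end is sensible but secondary to these two issues.
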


\begin{remark}[Index Convention Explanation]
All repeated indices in this section follow Einstein summation convention:
\begin{itemize}
\item Base manifold indices: $i,j,k \in \{1,\ldots,2n\}$
\item Fiber indices: $a,b,c \in \{1,\ldots,k\}$ (for $\xi_a$ and $\pi_a$)
\item Summation indices: indices with same upper-lower position automatically sum
\item Free indices: indices that do not appear repeatedly
\end{itemize}
\end{remark}

\section{Symmetry and Symplectic Reduction Theory}

\subsection{Lie Group Structure of Observation Symmetry Groups}

\begin{definition}[Observation Symmetry Group]
Lie group $G$ is called an observation symmetry group if there exist smooth group actions:
\begin{align*}
\rho_M: G \times M &\to M \\
\rho_Y: G \times Y &\to Y
\end{align*}
satisfying observation compatibility: $h \circ \rho_M(g, \cdot) = \rho_Y(g, \cdot) \circ h$, and symplecticity: $\rho_M^*\omega = \omega$.
\end{definition}

\begin{theorem}[Principal Bundle Representation of Observation Symmetry]
Let $G$ be an observation symmetry group. Then there exists principal bundle $P(E, G) \to E/G$ such that:
\begin{enumerate}[label=(\arabic*)]
\item Constraint function descends to quotient space: $\Phi = \pi^*\Phi_{\text{red}}$
\item Reduced dynamics preserves Hamiltonian structure
\item Moment map $\mu: E \to \mathfrak{g}^*$ satisfies $\mu^{-1}(0)/G \cong$ reduced phase space
\end{enumerate}
\end{theorem}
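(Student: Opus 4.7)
The plan is to construct the principal bundle in three stages: first lift the action to the total space $E$, then verify the quotient is a principal bundle, and finally identify the reduced phase space via a moment map. First I would use the observation compatibility $h \circ \rho_M(g,\cdot) = \rho_Y(g,\cdot) \circ h$ to lift the $G$-action to $E_{\mathcal{W}}$: for $g \in G$ and $(x,\xi) \in E$ with $\xi \in T^*_{h(x)}Y$, define
\[
g \cdot (x,\xi) = \bigl(\rho_M(g,x),\; (\rho_Y(g,\cdot)^{-1})^* \xi\bigr),
\]
which lands in $T^*_{h(\rho_M(g,x))}Y$ precisely because of the intertwining. I would then verify that this lifted action preserves the fiber bundle structure: it commutes with $\pi$, and the $G$-invariance of the fiber metric $\rho$ and uncertainty function $\delta$ (which follows from $G$ being an observation symmetry) ensures $\|(\rho_Y(g)^{-1})^*\xi\|_{\rho_{\rho_M(g,x)}} = \|\xi\|_{\rho_x}$, so the ball-fiber condition $\|\xi\|_{\rho_x} \leq \delta(x)$ is preserved.

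Next I would establish that $E/G$ carries a manifold structure and that $E \to E/G$ is a principal $G$-bundle. For this, the lifted action must be free and proper: properness descends from properness of the $G$-action on $M$ (which we may assume without loss of generality by restricting to the regular stratum), and freeness on $E$ follows from freeness on the base together with the linearity of the cotangent action. The quotient manifold theorem then yields the principal bundle $P(E,G) \to E/G$. Claim (1) is handled by invoking $G$-invariance of $\Phi$, which we must assume as part of the symmetry hypothesis (or verify if $\Phi$ is defined through $G$-invariant observation data); the descended function $\Phi_{\text{red}}$ is defined on $E/G$ and satisfies $\Phi = \pi^* \Phi_{\text{red}}$ by construction of the quotient.

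For claim (3), I would construct the moment map by the classical recipe applied to the fiber bundle symplectic form $\omega_E$ of Section~\ref{subsec:sym_fiber}: for each $\zeta \in \mathfrak{g}$, denote by $\zeta_E$ the infinitesimal generator of the lifted action on $E$, and define $\mu: E \to \mathfrak{g}^*$ by requiring
\[
d\langle \mu, \zeta \rangle = \iota_{\zeta_E} \omega_E
\]
for each $\zeta$. Existence reduces to showing that $\iota_{\zeta_E}\omega_E$ is exact, which uses $\mathcal{L}_{\zeta_E}\omega_E = 0$ (the $G$-action is symplectic on the total space, following from symplecticity on $M$ and the canonical lift on fibers) together with the topological hypotheses already needed in Section~2 (vanishing of the relevant first cohomology on $\mathcal{W}$ or contractibility of the working region). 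The level set $\mu^{-1}(0)$ is a coisotropic submanifold on which $G$ acts freely, and claim (2) together with the isomorphism in (3) follow from the Marsden--Weinstein theorem adapted to the fiber bundle symplectic form: the reduced space $\mu^{-1}(0)/G$ inherits a unique symplectic form $\omega_{\text{red}}$ with $\pi_{\text{red}}^* \omega_{\text{red}} = \iota^* \omega_E$, and the $G$-invariant Hamiltonian descends to a reduced Hamiltonian whose flow realizes the reduced dynamics.

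The main obstacle will be the verification that $\iota_{\zeta_E}\omega_E$ is exact in the presence of the curvature mixing term $\Omega_{\text{mix}}$: while $\mathcal{L}_{\zeta_E}\omega_E = 0$ gives closedness, obtaining a globally defined primitive $\langle\mu,\zeta\rangle$ requires controlling the interaction of $\zeta_E$ with the horizontal-vertical decomposition and the structure functions $C_{iab}, D_{iab}, E_{iab}, F_{iab}$. In the purely base and purely fiber directions the standard cotangent lift formula $\langle \mu, \zeta \rangle(x,\xi) = \langle \xi, \zeta_Y(h(x))\rangle + \mu_M(x)\cdot \zeta$ is the natural candidate, and I would prove it is a valid moment map by a direct computation using the explicit form of $\omega_E$, with the curvature contributions canceling due to the $G$-equivariance of the observation-adaptive connection — a consequence of the $G$-invariance of $h$, $\rho$, and $\delta$.
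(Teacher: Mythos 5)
Your proposal follows essentially the same route as the paper's (very brief) proof sketch: lift the $G$-action to $E$, verify freeness and properness so that $E \to E/G$ is a principal bundle, construct the moment map by pairing the fiber coordinate with the infinitesimal observation action, and invoke the fiber-bundle extension of Marsden--Weinstein reduction. Your version is in fact more careful than the paper's sketch --- you make the cotangent-type lift explicit, include the base contribution $\mu_M(x)\cdot\zeta$ that the paper's formula $\mu(x,\xi)=\langle \xi, \mathfrak{g}\cdot h(x)\rangle$ omits, and flag the $G$-invariance of $\Phi$, $\rho$, $\delta$ and the freeness/properness hypotheses that the paper leaves implicit.
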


\begin{proof}[Proof sketch]
Construction consists of three steps: (1) verify freeness and properness of $G$-action on $E$; (2) establish moment map $\mu(x, \xi) = \langle \xi, \mathfrak{g} \cdot h(x) \rangle$; (3) apply fiber bundle extension of Marsden-Weinstein reduction theorem\cite{marsden1974reduction}. Key is proving that reduced space still preserves observation fiber bundle structure.
\end{proof}

\subsection{Extension of Noether's Theorem}

\begin{theorem}[Observation Noether Theorem]
Let $X \in \mathfrak{g}$ be infinitesimal generator of observation symmetry group. If system Hamiltonian $H: E \to \mathbb{R}$ is invariant under $G$-action, then conserved quantity:
$$I_X = \mu(X) + \iota_X \Theta$$
is preserved along constrained dynamics, where $\Theta$ is canonical 1-form and $\mu$ is moment map.
\end{theorem}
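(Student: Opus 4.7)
The plan is to identify $I_X$ as the total moment map for the $G$-action on $(E,\omega_E)$, with $\mu(X)$ collecting the base contribution from $\pi^*\omega_M$ and $\iota_X\Theta$ collecting the fiber cotangent-lift contribution from $\omega_{\mathrm{fib}}$, and then to run the standard Noether argument, using $G$-invariance of both the Hamiltonian and the constraint (the latter guaranteed by Theorem 8) to kill the drift term and the Lagrange-multiplier term simultaneously.

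First I would set up the infinitesimal data. For $X\in\mathfrak{g}$, let $X_E$, $X_M$, $X_Y$ denote the fundamental vector fields induced on $E$, $M$, $Y$. The observation compatibility $h\circ\rho_M=\rho_Y\circ h$ together with the principal bundle construction of Theorem 8 yields $\pi_*X_E=X_M$ and $h_*X_M=X_Y$, and the observation-adaptive connection of Section 2.1 splits $X_E$ canonically into horizontal and vertical pieces $X_E^H+X_E^V$. The central technical identity I would then establish is
\begin{equation*}
\iota_{X_E}\omega_E \;=\; dI_X.
\end{equation*}
Using the decomposition $\omega_E=\pi^*\omega_M+\omega_{\mathrm{fib}}+\Omega_{\mathrm{mix}}$, I would contract term by term: $\iota_{X_E^H}(\pi^*\omega_M)=\pi^*(\iota_{X_M}\omega_M)=\pi^*d\mu(X)$ from the defining property of the base moment map; $\iota_{X_E^V}\omega_{\mathrm{fib}}=d(\iota_X\Theta)$ from the fiberwise cotangent-lift identity applied to the tautological 1-form $\Theta=\sum_a\pi_a\,d\xi_a$; and the mixed-curvature contraction $\iota_{X_E}\Omega_{\mathrm{mix}}$ is handled by the equivariance argument described below.

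Granting the identity, conservation along the constrained dynamics $\dot z=X_H+\lambda X_\Phi$ is a short calculation:
\begin{align*}
\frac{d}{dt}I_X &= \iota_{X_H}dI_X+\lambda\,\iota_{X_\Phi}dI_X \\
&= \iota_{X_H}\iota_{X_E}\omega_E+\lambda\,\iota_{X_\Phi}\iota_{X_E}\omega_E \\
&= -X_E(H)-\lambda\,X_E(\Phi).
\end{align*}
The first term vanishes by $G$-invariance of $H$; the second vanishes because Theorem 8(1) gives $\Phi=\pi^*\Phi_{\mathrm{red}}$, so $\Phi$ is $G$-invariant and $X_E(\Phi)=0$. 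This yields $\dot I_X=0$, and the same argument survives the regularization of second-class constraints as long as the regularized constraint inherits $G$-invariance from $\Phi_{\mathrm{red}}$.

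The main obstacle is controlling $\iota_{X_E}\Omega_{\mathrm{mix}}$: the mixing term encodes the non-commutation of horizontal and vertical distributions, and its contraction with $X_E$ involves the structure functions $C_{iab},D_{iab},E_{iab},F_{iab}$ in a nontrivial way. My plan for this step is to exploit $G$-equivariance of the observation-adaptive connection $\nabla$, which in turn follows from $G$-invariance of the fiber metric $\rho$ and the observation map $h$; concretely, $\mathcal{L}_{X_E}\nabla=0$ forces $\mathcal{L}_{X_E}\omega_E=0$, whence $\iota_{X_E}\omega_E$ is closed by Cartan's formula, and the existence of the moment map $\mu$ from the principal bundle reduction of Theorem 8 supplies the global primitive $I_X$. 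Boundary-layer degeneration of $\omega_E$ near $\partial\mathcal{W}$ is not a serious obstruction: conservation is proved strictly on the safety subdomain $\mathcal{W}_{\mathrm{safe}}$ where $\omega_E$ is non-degenerate, and extended continuously to $\partial\mathcal{W}$ by regularity of the data.
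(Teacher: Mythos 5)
Your overall skeleton --- prove a Hamiltonian-generator identity $\iota_{X_E}\omega_E = dI_X$ and then kill both $X_E(H)$ and the multiplier term $\lambda X_E(\Phi)$ by $G$-invariance --- is the same standard Noether argument the paper compresses into $\tfrac{d}{dt}I_X=\{I_X,H\}_D=0$, and your final conservation computation is correct \emph{once the central identity is granted}. The genuine gap is that this identity is never established for the specific $I_X=\mu(X)+\iota_X\Theta$ in the statement. Your term-by-term contraction only treats $\pi^*\omega_M$ and $\omega_{\text{fib}}$, and even there the split is not as clean as claimed: in the coordinates $(x^i,\xi_a,\pi_a)$ the horizontal part of $X_E$ has $\partial_{\xi_a},\partial_{\pi_a}$ components dictated by the connection (the fibers $T^*_{h(x)}Y$ are dragged as the group moves $x$), so the fiberwise cotangent-lift identity $\iota_{X}\omega_{\text{fib}}=d(\iota_X\Theta)$ cannot be applied purely fiberwise --- the cross terms it omits are exactly what $\Omega_{\text{mix}}$ encodes. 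For $\Omega_{\text{mix}}$ itself you only argue that $\mathcal{L}_{X_E}\omega_E=0$ makes $\iota_{X_E}\omega_E$ closed and that Theorem 8 ``supplies the global primitive.'' Closedness plus existence of \emph{some} primitive gives a conserved generator $\tilde I_X$ with $d\tilde I_X=\iota_{X_E}\omega_E$; it does not show $\tilde I_X=\mu(X)+\iota_X\Theta$. If $\iota_{X_E}\Omega_{\text{mix}}$ is a nonzero exact form, the conserved quantity is the stated $I_X$ plus a curvature correction and the theorem as written fails; ruling this out requires actually computing that contraction (e.g.\ via the structure functions $C_{iab},D_{iab},E_{iab},F_{iab}$ and their symmetries $C_{iab}=C_{iba}$, $D_{iab}=E_{iba}$) and showing it is absorbed into, or vanishes in, the stated formula. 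That computation is the real content of the theorem and is missing from your plan.

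A secondary mismatch: you read $\mu$ as the moment map of the $G$-action on the base $(M,\omega_M)$ and $\iota_X\Theta$ as the fiber cotangent-lift momentum, whereas the paper's Theorem on principal bundle representation constructs $\mu$ on $E$ by the fiber pairing $\mu(x,\xi)=\langle\xi,\mathfrak{g}\cdot h(x)\rangle$, i.e.\ $\mu(X)$ is itself the fiber-momentum term. The statement is vague enough that your reading is defensible, but your term-by-term matching ($\pi^*\omega_M\mapsto d\mu(X)$, $\omega_{\text{fib}}\mapsto d(\iota_X\Theta)$) depends on which convention is adopted, so the definitions must be fixed and the matching redone consistently. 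The parts that do stand: the computation $\tfrac{d}{dt}I_X=-X_E(H)-\lambda X_E(\Phi)$ is correct given the identity, using $\Phi=\pi^*\Phi_{\text{red}}$ to get $X_E(\Phi)=0$ is legitimate (and indeed makes the argument insensitive to the choice of $\lambda$, covering the regularized second-class case), and restricting to $\mathcal{W}_{\text{safe}}$ is the right way to sidestep boundary degeneration of $\omega_E$.
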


\begin{proof}[Proof sketch]
Calculate $\frac{d}{dt}I_X = \{I_X, H\}_D$. Using $G$-invariance of $H$ and equivariance of moment map, combined with observation compatibility condition, obtain $\{I_X, H\}_D = 0$.
\end{proof}

\section{Geometric Characterization of Integrability}

\subsection{Complete Integrability Conditions}

\begin{definition}[Fiber Bundle Integrability]
Constrained Hamiltonian system $(E, \omega_E, H, \Phi)$ is called completely integrable if there exist $n = \frac{1}{2}\dim E$ functions in involution $\{F_1, \ldots, F_n\}$ satisfying:
\begin{enumerate}[label=(\arabic*)]
\item Functional independence: $dF_1 \wedge \cdots \wedge dF_n \neq 0$ on dense open set
\item Poisson involution: $\{F_i, F_j\}_D = 0$, $\forall i, j$
\item Fiber compatibility: each $F_i$ preserves observation fiberization structure
\end{enumerate}
\end{definition}

\begin{theorem}[Fiber Bundle Arnold-Liouville Theorem]
Let constrained system be completely integrable. Then:
\begin{enumerate}[label=(\arabic*)]
\item Joint level sets $M_c = \{F_1 = c_1, \ldots, F_n = c_n\}$ are Lagrangian submanifolds
\item Compact connected components $M_c$ are diffeomorphic to torus $T^n$
\item There exist action-angle variables $(I, \theta)$ such that Hamiltonian depends only on action variables
\item Observation map induces quasi-periodic structure on torus
\end{enumerate}
\end{theorem}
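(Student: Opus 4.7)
The plan is to adapt the classical Arnold--Liouville proof to the fiber-bundle symplectic setting $(E,\omega_E)$, exploiting the decomposition $\omega_E = \pi^*\omega_M + \omega_{\text{fib}} + \Omega_{\text{mix}}$ from Section~\ref{subsec:sym_fiber}. The argument splits into four stages: the first three follow the classical blueprint but require careful bookkeeping of the mixing-curvature contributions, while the fourth is genuinely new and is where the principal difficulty lies.

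For (1) I would compute $\omega_E(X_{F_i},X_{F_j}) = \{F_i,F_j\}_E = 0$, so the span of the commuting Hamiltonian vector fields is isotropic; functional independence forces this span to have dimension $n$, and a dimension count then upgrades isotropy to Lagrangian. The explicit Poisson bracket formula ensures that the $K_{ia}$ and $L_{ia}$ contributions vanish on an involutive family, so no further input is needed. For (2), fiber compatibility of the $F_i$ forces each $X_{F_i}$ to preserve the horizontal--vertical splitting, so the commuting flows assemble into a smooth $\mathbb{R}^n$-action tangent to each joint level set. On a compact connected component the isotropy subgroup must be a discrete full-rank lattice $\Lambda\subset\mathbb{R}^n$ by the standard Pontryagin structure argument for compact abelian Lie groups, yielding $M_c \cong \mathbb{R}^n/\Lambda \cong T^n$.

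For (3), I would construct action variables as normalized period integrals $I_j = \frac{1}{2\pi}\oint_{\gamma_j}\Theta_E$ along a basis of 1-cycles in $H_1(M_c)$, where $\Theta_E$ is a local symplectic potential for $\omega_E$; the mixing contribution to $\Theta_E$ is controlled by the structure-function symmetries $C_{iab}=C_{iba}$, $F_{iab}=F_{iba}$, $D_{iab}=E_{iba}$ established in the closedness lemma. A tubular-neighborhood argument combined with a fiber-bundle version of the Carath\'eodory--Jacobi--Lie theorem then produces conjugate angles $\theta_j$, and invariance of the $I_j$-fibration under the $H$-flow forces $H = H(I)$.

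The main obstacle is (4): showing that $h\circ\pi$ induces a well-defined quasi-periodic structure on each torus. My strategy is to use fiber compatibility to prove that $h\circ\pi|_{M_c}$ lifts to a smooth $\Lambda$-equivariant map $\widetilde{h}:\mathbb{R}^n\to Y$, so that in action--angle coordinates the observed trajectory $t \mapsto \widetilde{h}(\theta(0)+\nu(I)t)$, with frequency vector $\nu(I) = \partial H/\partial I$, is quasi-periodic with frequencies $\nu(I)$. The technical hurdle is the presence of $\Omega_{\text{mix}}$: one must show that the observation-adaptive connection restricts to a flat connection along $M_c$, which I expect to follow from combining fiber compatibility of the integrals with the vanishing fiber-curvature axiom of the flat adaptive connection and the Bianchi-type cancellations invoked in the closedness proof. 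Once flatness along $M_c$ is secured, the classical Kronecker theory on $T^n$ delivers the quasi-periodic conclusion, with the rationality class of $\nu(I)$ governing the topological closure of each observed orbit.
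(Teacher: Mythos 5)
Your proposal follows essentially the same route as the paper's own (sketch-level) proof, which lists exactly your four steps: Lagrangian property of $M_c$ via involution plus a dimension count, torus structure from the commuting $\mathbb{R}^n$-action and the lattice argument, action-angle variables via period integrals of a symplectic potential, and finally the dynamics induced by the observation map. One remark: the connection-flatness condition along $M_c$ that you single out as the principal difficulty in step (4) is stronger than needed --- once $M_c\cong T^n$ carries the linear flow $\theta(t)=\theta(0)+\nu(I)\,t$, the composition of the smooth restricted map $h\circ\pi|_{M_c}$ with this flow is automatically quasi-periodic with frequency vector $\nu(I)$, so no control of $\Omega_{\text{mix}}$ or of the observation-adaptive connection is required for that conclusion.
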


\begin{proof}[Proof sketch]
Proof of standard Arnold-Liouville theorem\cite{arnold1989mathematical} needs adaptation to fiber bundle setting. Key steps: (1) verify Lagrangian property of $M_c$; (2) use observation fiberization to construct torus coordinates; (3) establish existence of action-angle variables; (4) analyze dynamics induced by observation map.
\end{proof}

\subsection{Lax Pair Structure}

\begin{theorem}[Observation Lax Pair Construction]
For completely integrable constrained system, there exists parametrized Lax pair $(L(\lambda, \boldsymbol{\epsilon}), A(\lambda, \boldsymbol{\epsilon}))$, where $\lambda$ is spectral parameter and $\boldsymbol{\epsilon} = (\epsilon_1, \ldots, \epsilon_k)$ are observation uncertainty parameters, satisfying:
\begin{enumerate}[label=(\arabic*)]
\item Zero curvature condition: $\frac{dL}{dt} = [A, L]$
\item Observation compatibility: $L, A$ preserve fiber bundle structure
\item Spectral invariance: $\text{spec}(L(\lambda, \boldsymbol{\epsilon}))$ is conserved along trajectories
\item Continuity: $\lim_{\boldsymbol{\epsilon} \to 0} L(\lambda, \boldsymbol{\epsilon}) = L_{\text{classical}}(\lambda)$
\end{enumerate}
\end{theorem}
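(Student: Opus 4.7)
The plan is to construct the Lax pair by first specializing to the Arnold--Liouville tori provided by the preceding theorem and then deforming along the fiber coordinates to incorporate $\boldsymbol{\epsilon}$. Concretely, I would begin on a regular level set $M_c$ with its action--angle coordinates $(I,\theta)$ and realize the involutive algebra generated by $F_1,\ldots,F_n$ as a spectral ring: choose a faithful embedding of $\mathrm{span}\{dF_i\}$ into the Cartan subalgebra of a loop algebra $\widetilde{\mathfrak{g}} = \mathfrak{g}[\lambda,\lambda^{-1}]$ (taking $\mathfrak{g} = \mathfrak{gl}_n$ is enough for $n$ independent conserved quantities) and set $L_0(\lambda) = \sum_i I_i\, e_i(\lambda)$ with $e_i(\lambda)$ a Lax basis calibrated so that each $F_i$ is recovered as the spectral invariant $\mathrm{tr}(L_0^{i+1})/(i+1)$. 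This step is classical and only requires the Arnold--Liouville conclusion already established.

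Next I would switch on the observation parameters. Using the trivialization $\psi_x$ from the local fiber bundle lemma, identify each vertical direction $\xi_a$ with a generator of $\widetilde{\mathfrak{g}}$ and define
$$L(\lambda,\boldsymbol{\epsilon}) = L_0(\lambda) + \sum_{a=1}^{k} \epsilon_a\, T_a(\lambda,x),$$
where the matrix-valued coefficients $T_a(\lambda,x)$ are determined by the observation-adaptive connection so that horizontal lifts of base-vector fields match conjugation by a matrix $A$ to be defined. I would then take $A(\lambda,\boldsymbol{\epsilon})$ via the standard classical $r$-matrix recipe associated with a Birkhoff-type splitting $\widetilde{\mathfrak{g}} = \widetilde{\mathfrak{g}}_+ \oplus \widetilde{\mathfrak{g}}_-$, augmented by a correction term $\sum_a \epsilon_a\, \Gamma^a_{bi}(x)\,\tau^{bi}$ built from the mixed connection coefficients; this augmentation is precisely what transports the Kostant--Adler construction into the bundle setting of Subsection~\ref{subsec:sym_fiber}.

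Verification of the four conditions then proceeds as follows. Condition (4) is immediate from polynomial dependence on $\boldsymbol{\epsilon}$, since $L(\lambda,\mathbf{0}) = L_0(\lambda)$ is the classical Lax matrix. Condition (3) follows from (1) via $\frac{d}{dt}\mathrm{tr}(L^m) = m\,\mathrm{tr}(L^{m-1}[A,L]) = 0$. Condition (1) reduces, after substituting Hamilton's equations for the fiber-bundle Poisson bracket from the earlier proposition, to the pairwise involution $\{F_i,F_j\}_E = 0$ combined with the Bianchi identity for the observation-adaptive connection, both established earlier. Condition (2) asks that $L$ and $A$ descend along $\pi$ on horizontal parts while acting linearly on vertical parts; this follows from metric compatibility $\nabla\rho = 0$ together with the index symmetries of the structure functions $C_{iab}, D_{iab}, E_{iab}, F_{iab}$ proved in the closedness lemma.

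The main obstacle is the coupling between condition (1) and condition (2) in the presence of the mixing brackets $\{x^i,\xi_a\}_E = K_{ia}$ and $\{x^i,\pi_a\}_E = L_{ia}$. In the classical (undeformed) setting the horizontal and vertical Poisson brackets decouple and zero curvature is a one-line Kostant--Adler argument; here the mixed brackets inject curvature-dependent cross-terms into $\frac{dL}{dt} - [A,L]$ that must be absorbed into the choice of $T_a$ and the correction of $A$. I expect the relevant compatibility equation to take the schematic form $[A_{(0)}, T_a] - \partial_i T_a = D_{iab}\, U^b + E_{iab}\, V^b$ for suitable auxiliary tensors $U^b, V^b$, which can only be solved when the symmetries $C_{iab} = C_{iba}$, $F_{iab} = F_{iba}$, $D_{iab} = E_{iba}$ hold; these are precisely the integrability conditions established in Section~\ref{subsec:sym_fiber}, so the construction should go through, but the detailed bookkeeping of $\lambda$-powers and horizontal--vertical splitting is the delicate part.
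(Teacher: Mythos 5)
Your route is genuinely different from the paper's. The paper's proof is a short constructive sketch: it sets $L(\lambda,\boldsymbol{\epsilon})=\sum_i F_i\,P_i(\lambda,\boldsymbol{\epsilon})$ and $A(\lambda,\boldsymbol{\epsilon})=\sum_i \{H,F_i\}_D\,Q_i(\lambda,\boldsymbol{\epsilon})$ with ``observation-adaptive matrix polynomials'' $P_i,Q_i$, and secures observation compatibility by imposing the equivariance $P_i(\lambda,g\cdot\boldsymbol{\epsilon})=\mathrm{Ad}_g\,P_i(\lambda,\boldsymbol{\epsilon})$; the zero-curvature, spectral-invariance and degeneration properties are not verified beyond this ansatz. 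You instead go through the standard loop-algebra machinery: realize the involutive family inside a Cartan of $\mathfrak{gl}_n[\lambda,\lambda^{-1}]$, recover the $F_i$ as spectral invariants of $L_0$, deform by $\sum_a\epsilon_a T_a(\lambda,x)$ using the trivialization and the observation-adaptive connection, and take $A$ from a Kostant--Adler/$r$-matrix splitting with a connection correction. What your approach buys is a nontrivial, systematically generated $A$ and genuinely checkable derivations of conditions (3) and (4); note that in the paper's ansatz the coefficients $\{H,F_i\}_D$ vanish by involution, so its $A$ is essentially degenerate, whereas your $A$ is not. What the paper's approach buys is brevity and a clean statement of where the fiber-bundle input enters (Ad-equivariance under the observation symmetry group), which plays the role your metric-compatibility and structure-function-symmetry argument plays for condition (2).

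One caveat: the step you yourself flag --- solving the compatibility equation for $T_a$ and the correction of $A$ so that the cross-terms generated by the mixed brackets $\{x^i,\xi_a\}_E=K_{ia}$ and $\{x^i,\pi_a\}_E=L_{ia}$ are absorbed --- is asserted, not carried out; the schematic equation $[A_{(0)},T_a]-\partial_i T_a = D_{iab}U^b+E_{iab}V^b$ needs an existence argument (e.g.\ order-by-order in $\lambda$ and $\boldsymbol{\epsilon}$, as in the Toda example following the theorem), not just the symmetries $C_{iab}=C_{iba}$, $F_{iab}=F_{iba}$, $D_{iab}=E_{iba}$. Since the paper's own proof leaves exactly the same verification implicit in the choice of $P_i,Q_i$, this is a shared gap rather than a defect specific to your argument, but if you want your version to stand as a complete proof you should supply that solvability step.
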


\begin{proof}[Constructive proof]
Using gradient-Hamiltonian structure of integrable system, construct:
\begin{align*}
L(\lambda, \boldsymbol{\epsilon}) &= \sum_{i=1}^n F_i P_i(\lambda, \boldsymbol{\epsilon}) \\
A(\lambda, \boldsymbol{\epsilon}) &= \sum_{i=1}^n \{H, F_i\}_D Q_i(\lambda, \boldsymbol{\epsilon})
\end{align*}
where $P_i, Q_i$ are observation-adaptive matrix polynomials and $F_i$ are modified integrals.

Observation compatibility is ensured by requiring $P_i, Q_i$ to be covariant under fiber coordinate transformations:
$$P_i(\lambda, g \cdot \boldsymbol{\epsilon}) = \text{Ad}_g P_i(\lambda, \boldsymbol{\epsilon})$$
where $g$ is element of observation symmetry group.
\end{proof}

\begin{definition}[Observation-Compatible Lax Pair]
Lax pair $(L(\lambda, \boldsymbol{\epsilon}), A(\lambda, \boldsymbol{\epsilon}))$ is called observation-compatible if:
\begin{enumerate}[label=(\arabic*)]
\item \textbf{Fiber dependence:} $L, A$ depend smoothly on fiber coordinates $\boldsymbol{\epsilon}$
\item \textbf{Constraint preservation:} $\{L, \Phi\}_D = 0$, $\{A, \Phi\}_D = 0$
\item \textbf{Observation degeneration:} $\lim_{\boldsymbol{\epsilon} \to 0} L(\lambda, \boldsymbol{\epsilon}) = L_{\text{classical}}(\lambda)$
\item \textbf{Spectral stability:} Characteristic polynomial coefficients are stable within $O(\|\boldsymbol{\epsilon}\|)$
\end{enumerate}
\end{definition}

\begin{example}[Complete Lax Pair Verification for Toda Lattice]
For $n$-particle observation-constrained Toda lattice, construct and verify observation-compatible Lax pair in detail:

\textbf{Step 1: Physical parametrization of observation uncertainty}
\begin{align*}
\delta_{ij}(x) &= \delta_0 \sqrt{1 + \alpha_{ij} \|p_i - p_j\|^2} \quad \text{(relative velocity dependence)} \\
\sigma_{ij}(x) &= \sigma_0 e^{-\beta|q_i - q_j|} \quad \text{(spatial correlation decay)} \\
\epsilon_i &= \xi_i - (q_i - q_{i+1}) \quad \text{(observation deviation)}
\end{align*}

\textbf{Step 2: Hierarchical Lax matrix construction}
$$L(\lambda, \boldsymbol{\epsilon}) = L_0 + \lambda L_1(\boldsymbol{\epsilon}) + \lambda^2 L_2(\boldsymbol{\epsilon})$$

where:
$$L_0 = \begin{pmatrix}
p_1 & e^{q_1-q_2+\epsilon_1} & 0 & \cdots & 0 \\
e^{q_1-q_2+\epsilon_1} & p_2 & e^{q_2-q_3+\epsilon_2} & \cdots & 0 \\
\vdots & \ddots & \ddots & \ddots & \vdots \\
0 & \cdots & 0 & e^{q_{n-1}-q_n+\epsilon_{n-1}} & p_n
\end{pmatrix}$$

$$L_1(\boldsymbol{\epsilon}) = \begin{pmatrix}
\delta_{11} & \epsilon_1\sigma_{12} & \epsilon_1^2\sigma_{13} & \cdots \\
\epsilon_1\sigma_{12} & \delta_{22} & \epsilon_2\sigma_{23} & \cdots \\
\epsilon_1^2\sigma_{13} & \epsilon_2\sigma_{23} & \delta_{33} & \cdots \\
\vdots & \vdots & \vdots & \ddots
\end{pmatrix}$$

$$L_2(\boldsymbol{\epsilon}) = \text{diag}(\epsilon_1^2, \epsilon_2^2, \ldots, \epsilon_{n-1}^2) \cdot R(\boldsymbol{q}, \boldsymbol{p})$$

where $R(\boldsymbol{q}, \boldsymbol{p})$ is observation correlation function.

\textbf{Step 3: Construction of corresponding A matrix}
$$A(\lambda, \boldsymbol{\epsilon}) = A_0 + \lambda A_1(\boldsymbol{\epsilon}) + \lambda^2 A_2(\boldsymbol{\epsilon})$$

where:
$$A_0 = \begin{pmatrix}
0 & e^{q_1-q_2+\epsilon_1} & 0 & \cdots \\
-e^{q_1-q_2+\epsilon_1} & 0 & e^{q_2-q_3+\epsilon_2} & \cdots \\
0 & -e^{q_2-q_3+\epsilon_2} & 0 & \cdots \\
\vdots & \vdots & \vdots & \ddots
\end{pmatrix}$$

$$A_1(\boldsymbol{\epsilon}) = \begin{pmatrix}
\dot{\epsilon}_1 & \epsilon_1\dot{\sigma}_{12} & \epsilon_1^2\dot{\sigma}_{13} & \cdots \\
-\epsilon_1\dot{\sigma}_{12} & \dot{\epsilon}_1 + \dot{\epsilon}_2 & \epsilon_2\dot{\sigma}_{23} & \cdots \\
-\epsilon_1^2\dot{\sigma}_{13} & -\epsilon_2\dot{\sigma}_{23} & \dot{\epsilon}_2 + \dot{\epsilon}_3 & \cdots \\
\vdots & \vdots & \vdots & \ddots
\end{pmatrix}$$

\textbf{Step 4: Order-by-order verification of zero curvature condition}
Matching by powers of $\lambda$:

\textbf{$O(\lambda^0)$ term:} Classical Toda condition
$$\frac{dL_0}{dt} = [A_0, L_0]$$

Detailed verification of $(1,2)$ element:
\begin{align*}
\text{LHS}: &\quad \frac{d}{dt}(e^{q_1-q_2+\epsilon_1}) = e^{q_1-q_2+\epsilon_1}(\dot{q}_1 - \dot{q}_2 + \dot{\epsilon}_1) \\
&= e^{q_1-q_2+\epsilon_1}(p_1 - p_2 + \dot{\epsilon}_1)
\end{align*}

\begin{align*}
\text{RHS}: &\quad [A_0, L_0]_{12} = A_0^{11}L_0^{12} - L_0^{11}A_0^{12} + A_0^{12}L_0^{22} - L_0^{12}A_0^{22} \\
&= 0 \cdot e^{q_1-q_2+\epsilon_1} - p_1 \cdot e^{q_1-q_2+\epsilon_1} + e^{q_1-q_2+\epsilon_1} \cdot p_2 - e^{q_1-q_2+\epsilon_1} \cdot 0 \\
&= e^{q_1-q_2+\epsilon_1}(p_2 - p_1)
\end{align*}

Equality holds if and only if $\dot{\epsilon}_1 = 2(p_1 - p_2)$, which is precisely the dynamical evolution law of observation error.

\textbf{$O(\lambda^1)$ term:} Linear noise coupling
$$\frac{dL_1}{dt} = [A_0, L_1] + [A_1, L_0]$$

\textbf{$O(\lambda^2)$ term:} Second-order noise effects
$$\frac{dL_2}{dt} = [A_0, L_2] + [A_1, L_1] + [A_2, L_0]$$

\textbf{Step 5: Determination of integrability conditions}
System preserves integrability if and only if observation error satisfies bounds:
$$\|\boldsymbol{\epsilon}\|_\infty \leq \epsilon_{\text{crit}} = \min_i \left\{\frac{|p_i - p_{i+1}|}{2\sigma_0}\right\}$$

\textbf{Physical interpretation:} This condition ensures observation noise does not "drown out" the system's intrinsic dynamical structure, preserving core geometric properties of integrability.
\end{example}

\section{Typical Examples and Application Verification}

\begin{remark}[Handling Strategy for Non-Euclidean Observation Manifolds]
This assumption simplifies theoretical framework, avoiding topological complexity of Whitney embedding. For general observation manifold cases, need to consider various common non-Euclidean situations, including spherical observation $Y = S^2$ (such as panoramic camera angle measurement), group manifold observation $Y = SO(3)$ (such as attitude sensors), and manifold products $Y = S^1 \times \mathbb{R}^2$ (such as mixed angle and position measurements) and other complex geometric structures.

\textbf{Whitney embedding strategy:} For compact observation manifold $Y$, Whitney embedding theorem guarantees existence of embedding $\iota: Y \hookrightarrow \mathbb{R}^N$. Key technical issue is verifying isomorphism relation $h^*T(\iota(Y)) \cong h^*TY$ as vector bundle, which holds automatically when $\iota$ is isometric embedding.

\textbf{Preservation of topological equivalence:} Let $\tilde{h} = \iota \circ h: M \to \mathbb{R}^N$, then Euler class satisfies $e(\tilde{h}^*T\mathbb{R}^N) = e(h^*TY)$ (when restricted to $\iota(Y)$), and observation fiber bundle $\tilde{E}$ maintains diffeomorphic relation with original fiber bundle $E$.

Complete non-Euclidean observation manifold theory needs development of intrinsic differential geometric methods (without embedding), theory of coordination between Riemannian structure of observation manifold and constraint potential, and handling techniques for variable observation dimensions (such as dynamic sensor configuration). Current Euclidean assumption covers vast majority of practical applications including work in \cite{zheng2025learning}.
\end{remark}

\subsection{Reformulation of Classical Integrable Systems}

\begin{example}[Observation-Constrained Toda Lattice]
Consider $n$-particle Toda lattice formulation on observation fiber bundle, using hierarchical dynamics method to handle complex many-body interaction systems.

\textbf{Basic setup:}
System's base manifold dynamics is defined on $(q,p) \in \mathbb{R}^{2n}$, with Hamiltonian $H = \frac{1}{2}\sum_{i=1}^n p_i^2 + \sum_{i=1}^{n-1} e^{q_i - q_{i+1}}$, describing exponential interaction between particles. Observation map is chosen as relative positions $h(q,p) = (q_1-q_2, \ldots, q_{n-1}-q_n)$, corresponding to common relative measurement in experiments. In fiber coordinates $(\xi_i, \pi_i)_{i=1}^{n-1}$, $\pi_i = p_i - p_{i+1}$ represents relative momentum, reflecting intrinsic correlation structure of the system.

\textbf{Precise definition of technical parameters:}
Small parameter is defined as $\epsilon = \max_i \frac{|\xi_i|}{\max(|q_i - q_{i+1}|, \ell_0)}$, where $\ell_0 = \frac{1}{n}\min_{i,j} |q_i(0) - q_j(0)|$ is characteristic length scale of initial configuration. This definition ensures denominator is always positive $\max(|q_i - q_{i+1}|, \ell_0) \geq \ell_0 > 0$. Physically, $\epsilon$ represents ratio of observation uncertainty to system characteristic scale, with small perturbation condition requiring $\epsilon \ll 1$ to hold throughout evolution.

Weight function adopts center-symmetric distribution $w_i = e^{-\beta|i - (n+1)/2|}$, where $(n+1)/2$ is center position of chain, reflecting importance of central particle in observation. Effective mass is defined as $m_{\text{eff}} = \left(\sum_{i=1}^{n-1} m_i^{-1}\right)^{-1}$, corresponding to reduced mass of relative coordinate motion. This definition allows dynamical analysis of many-particle system to be simplified to effective two-body problem.

\textbf{Hierarchical Hamiltonian system:}
Using asymptotic expansion in small parameter $\epsilon$:
\begin{align*}
H_{\text{total}} &= H_0(q,p) + \epsilon H_1(\xi,\pi) + O(\epsilon^2)\\
H_1(\xi,\pi) &= \sum_{i=1}^{n-1} \left( \frac{\pi_i^2}{2m_{\text{eff}}} + V_{\text{obs}}(\xi_i) \right)
\end{align*}
where $V_{\text{obs}}(\xi_i) = \frac{1}{2}\kappa \xi_i^2$ is observation potential, $\kappa > 0$ is constraint strength.

\textbf{Smooth observation constraints:}
\begin{align*}
\Phi(\xi,\pi) &= \sum_{i=1}^{n-1} w_i \left( \frac{\xi_i^2}{\delta_0^2} + \frac{\pi_i^2}{2\alpha \delta_0^2} \right) - 1 \leq 0
\end{align*}
where $\alpha > 0$ is momentum weight factor, $\delta_0 > 0$ is observation precision bound.

\textbf{Hierarchical preservation of integrability:}
\begin{proposition}[Integrability under Observation Perturbation]
When observation parameters satisfy smallness condition:
\begin{align*}
\epsilon &\leq \epsilon_{\text{crit}} = \frac{1}{2} \min_{i \neq j} \frac{|\lambda_i - \lambda_j|}{\max(\lambda_i, \lambda_j)}
\end{align*}
where $\lambda_i$ are eigenvalues of classical Toda lattice Lax matrix, then system preserves following properties:
\begin{enumerate}
\item \textbf{Spectral stability:} Relative change of Lax eigenvalues $|\delta\lambda_i/\lambda_i| = O(\epsilon)$
\item \textbf{Preservation of integrable structure:} Modified integrals $I_k^{\text{obs}} = I_k^{\text{classical}} + \epsilon \delta I_k + O(\epsilon^2)$ still in involution
\item \textbf{Arnold-Liouville tori:} Invariant tori under observation constraints are $O(\epsilon)$-perturbations of classical tori
\end{enumerate}
\end{proposition}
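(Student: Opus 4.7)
The plan is to verify the three claims by treating the hierarchical Lax pair $L(\lambda,\boldsymbol{\epsilon}) = L_0 + \lambda L_1(\boldsymbol{\epsilon}) + \lambda^2 L_2(\boldsymbol{\epsilon})$ from the preceding example as an analytic perturbation of the classical Toda Lax matrix, expanding every object of interest in powers of $\epsilon$, and exploiting the $r$-matrix algebra of the unperturbed system together with the symmetry properties of the structure functions $C_{iab}, D_{iab}, E_{iab}, F_{iab}$ established in Section~\ref{subsec:sym_fiber}. For spectral stability I would first bound the perturbation operator by
\[
\|L(\lambda,\boldsymbol{\epsilon}) - L_0\| \le C\,\epsilon\,(1+|\lambda|+|\lambda|^2)
\]
on any compact spectral disk, with $C$ controlled by $\delta_0$, $\sigma_0$, the weights $w_i$, and the bounded structure functions. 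Since $\epsilon_{\text{crit}}$ is half the minimum relative eigenvalue gap of $L_0$, the Kato--Rellich theory guarantees that each simple eigenvalue persists as an analytic function $\lambda_i(\boldsymbol{\epsilon})$, and the first-order formula $\delta\lambda_i = \langle v_i,(\lambda L_1+\lambda^2 L_2)v_i\rangle + O(\epsilon^2)$ yields the uniform bound $|\delta\lambda_i/\lambda_i| = O(\epsilon)$.

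Next I would address involution of the modified integrals $I_k^{\text{obs}} = \tfrac{1}{k}\operatorname{tr} L(\lambda,\boldsymbol{\epsilon})^k$ by computing $\{I_k^{\text{obs}}, I_\ell^{\text{obs}}\}_E$ on the observation fiber bundle and organizing the expansion in powers of $\epsilon$. The $O(1)$ contribution is the classical bracket $\{\operatorname{tr} L_0^k, \operatorname{tr} L_0^\ell\}$, which vanishes by the standard Toda $r$-matrix identity. The $O(\epsilon)$ correction splits into a base-manifold piece that vanishes for the same reason and a fiber-mixing piece whose coefficients are the structure functions; using the canonical pair $\{\xi_a,\pi_b\}_E=\delta_a^b$ together with the symmetries $C_{iab}=C_{iba}$, $F_{iab}=F_{iba}$, and $D_{iab}=E_{iba}$ proved in the closedness lemma, this piece should reassemble into an antisymmetric commutator that vanishes under the trace. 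The residual $O(\epsilon^2)$ discrepancy is absorbed into the correction $\delta I_k$, delivering involution of the modified integrals up to the claimed order.

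For the Arnold--Liouville claim I would apply the fiber bundle Arnold--Liouville theorem to the system of modified integrals. Let $\mathcal{F}_{\boldsymbol{\epsilon}}=(I_1^{\text{obs}},\ldots,I_n^{\text{obs}})$; the spectral stability estimate ensures that $d\mathcal{F}_{\boldsymbol{\epsilon}}$ stays surjective on the classical regular set, so an implicit function argument shows that its level sets form a Lagrangian foliation that is a $C^1$, $O(\epsilon)$-perturbation of the classical Liouville foliation. The commuting Hamiltonian flows of the $I_k^{\text{obs}}$ (commuting by the previous paragraph) generate a free $\mathbb{R}^n$-action on each compact connected leaf, and quotienting by the stationary lattice produces tori Hausdorff-close, to order $\epsilon$, to the classical Arnold--Liouville tori.

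The principal obstacle I foresee lies in the involution step: the fiber-mixing structure functions do not individually vanish, so whether their cumulative contribution to $\{\operatorname{tr} L^k, \operatorname{tr} L^\ell\}_E$ actually reassembles into an exact $r$-matrix commutator depends on a tight matching between the symmetry relations from the closedness lemma and the concrete $\epsilon_i\sigma_{ij}$ structure of the matrix $L_1(\boldsymbol{\epsilon})$. If this algebraic cancellation fails at first order, a direct algebraic proof is unavailable and one must instead invoke a Moser-style averaging normal form over the Arnold--Liouville tori of $L_0$ to restore involution up to $O(\epsilon^2)$, at the cost of replacing $\delta I_k$ by a suitable averaged correction and working in slightly deformed action--angle coordinates.
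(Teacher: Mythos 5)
Your plan is genuinely different from what the paper does: the paper offers no direct proof of this proposition at all, but supports it through the surrounding hierarchical construction (adiabatic time-scale separation $T_{\text{classical}} \ll T_{\text{obs}}$, Liouville volume preservation of the constrained flow, and KAM non-resonance under $\epsilon < \epsilon_{\text{crit}}$), plus the order-by-order zero-curvature verification of the observation Lax pair. Your route — Kato--Rellich spectral perturbation, trace invariants with an $r$-matrix cancellation, and an implicit-function argument for the foliation — would be stronger if it worked, but as written it has three concrete gaps. First, the bound $\|L(\lambda,\boldsymbol{\epsilon})-L_0\|\le C\,\epsilon\,(1+|\lambda|+|\lambda|^2)$ is false for the hierarchical Lax matrix you cite: the diagonal of $L_1(\boldsymbol{\epsilon})$ consists of $\delta_{ii}=\delta_0\sqrt{1+\alpha_{ij}\|p_i-p_j\|^2}$, which does not vanish as $\boldsymbol{\epsilon}\to 0$; only the off-diagonal entries of $L_1$ and all of $L_2$ are $O(\epsilon)$. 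So the perturbation is $O(\lambda\,\delta_0)$ at zeroth order in $\epsilon$ unless you tie $\delta_0$ to the smallness parameter or work with the dimension-compatible construction in which the deviation enters only through $e^{y_i+\epsilon_i}$. Moreover, the relative estimate $|\delta\lambda_i/\lambda_i|=O(\epsilon)$ degenerates when some $\lambda_i$ is near zero; $\epsilon_{\text{crit}}$ controls relative gaps, not the distance of the spectrum from the origin, so an extra hypothesis (or a statement in terms of absolute shifts scaled by the gap) is needed.

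Second, the step you yourself flag as the principal obstacle is exactly where the argument is not yet a proof: the symmetries $C_{iab}=C_{iba}$, $F_{iab}=F_{iba}$, $D_{iab}=E_{iba}$ guarantee closedness of $\omega_E$, but they do not by themselves furnish an $r$-matrix identity for the deformed $L$ with respect to the bundle bracket, in which $\{x^i,\xi_a\}_E=K_{ia}\neq 0$ couples base and fiber at first order; also the $O(\epsilon)$ base contribution does not vanish "for the same reason" as the classical one, since the $\epsilon_i$-dependence of $e^{q_i-q_{i+1}+\epsilon_i}$ already deforms the bracket of the entries. Without establishing that cancellation, the plan collapses to the Moser-averaging fallback, which changes the statement (averaged $\delta I_k$, deformed coordinates) rather than proving the one asserted. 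Third, a counting issue: the observation-constrained phase space has dimension $2n+2(n-1)$, so the $n$ trace invariants $I_k^{\text{obs}}$ cannot define a Lagrangian foliation there; the paper's hierarchical setup supplies the missing fiber actions through the harmonic block $H_1(\xi,\pi)$, and your Arnold--Liouville step must either adjoin those fiber integrals to $\mathcal{F}_{\boldsymbol{\epsilon}}$ or be restricted to the reduced base dynamics after adiabatic elimination of the fiber variables.
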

\textbf{Physical basis for parameter selection:}
System parameter selection is based on clear physical considerations. Lattice spacing lower bound $\ell_0 = \min_{i,t} |q_i(t) - q_{i+1}(t)|$ reflects minimum separation distance of particles throughout evolution, ensuring system does not exhibit non-physical behavior of particle collisions. Weight function adopts center-symmetric distribution, reaching maximum at chain center $(n+1)/2$, reflecting importance of central particle in observation, conforming to convention that sensors are usually concentrated at critical parts in actual experiments. Reduced mass $m_{\text{eff}}$ is defined as reciprocal of harmonic mean of particle masses, describing effective inertia of relative motion, enabling dynamical analysis of many-particle system to be simplified to effective two-body problem.

\textbf{Mathematical basis for simplification:}
Reasonableness of theoretical simplification is established on multiple mathematical foundations. System exhibits clear time scale separation: characteristic time of classical dynamics $T_{\text{classical}} \sim 1/\sqrt{E}$ (where $E$ is system energy) is much smaller than response time of observation constraints $T_{\text{obs}} \sim \sqrt{m_{\text{eff}}/\kappa}$, making adiabatic approximation possible. Design of observation constraints preserves volume-preserving property of Hamiltonian system, i.e., Liouville measure $\frac{d}{dt}\int_{\Phi \leq 0} d^{2(n-1)}\xi d^{2(n-1)}\pi = 0$ is strictly conserved, ensuring integrity of phase space structure. Under small perturbation condition $\epsilon < \epsilon_{\text{crit}}$, non-resonance condition of KAM theory is satisfied, guaranteeing existence of most invariant tori and long-term stability of system.

\textbf{Physical picture:}
Observation constraint defines weighted ellipsoidal domain in relative coordinate space, with central particle under strictest monitoring. Integrable motion of Toda lattice is preserved within this domain, with torus structure undergoing regular $O(\epsilon)$ deformation. When particle relative motion attempts to escape observation domain, soft constraint force gently pulls it back, maintaining overall integrability.

\begin{remark}[Summary of Technical Points]
Technical advantages of theoretical framework are manifested in multiple aspects. Numerical stability is ensured by introducing $\ell_0$ to prevent singularities in small parameter definition, avoiding numerical divergence in computation. Physical reasonableness is reflected through center-symmetric weight distribution, which not only conforms to geometric features of actual observation, but also reflects intuition that central parts are usually more important in physical systems. Theoretical completeness is guaranteed through clear definition of effective mass, making dynamical analysis of complex many-body systems mathematically self-consistent. In terms of applicability, small perturbation assumption $\epsilon \ll 1$ covers vast majority of actual observation precision requirements, making theory practically valuable.
\end{remark}
\end{example}

\begin{example}[Dimension-Compatible Lax Pair Construction-Symbol Unified Corrected Version]
For $n$-particle Toda lattice, consider following geometric setup:

\textbf{Basic configuration:}
\begin{itemize}
\item State space: $M = \mathbb{R}^{2n}$, coordinates $(q_1, \ldots, q_n, p_1, \ldots, p_n)$
\item Observation space: $Y = \mathbb{R}^{n-1}$
\item Observation map: $h: M \to Y$, $h(q,p) = (q_1 - q_2, \ldots, q_{n-1} - q_n)$
\item Fiber coordinates: $\boldsymbol{\xi} = (\xi_1, \ldots, \xi_{n-1}) \in \mathbb{R}^{n-1}$
\item Observation error: $\boldsymbol{\epsilon} = (\epsilon_1, \ldots, \epsilon_{n-1})$, where $\epsilon_i = \xi_i - (q_i - q_{i+1})$
\end{itemize}

\textbf{Geometric interpretation:} Fiber coordinate $\xi_i$ is cotangent coordinate, observation error $\epsilon_i$ represents deviation in $i$-th relative coordinate observation.

\textbf{Dimension-compatible Lax matrix construction:}

Let $\boldsymbol{y} = (y_1, \ldots, y_{n-1}) = h(q,p)$ be ideal observation value, actual observation is $\boldsymbol{y} + \boldsymbol{\epsilon}$.

Define $(n \times n)$ observation Lax pair:
$$L(\lambda, \boldsymbol{\epsilon}) = L_0(\boldsymbol{\epsilon}) + \lambda L_1(\boldsymbol{\epsilon})$$

where:
$$L_0(\boldsymbol{\epsilon}) = \begin{pmatrix}
p_1 & e^{y_1 + \epsilon_1} & 0 & \cdots & 0 \\
e^{y_1 + \epsilon_1} & p_2 & e^{y_2 + \epsilon_2} & \cdots & 0 \\
0 & e^{y_2 + \epsilon_2} & p_3 & \cdots & 0 \\
\vdots & \ddots & \ddots & \ddots & \vdots \\
0 & \cdots & 0 & e^{y_{n-1} + \epsilon_{n-1}} & p_n
\end{pmatrix}$$

$$L_1(\boldsymbol{\epsilon}) = \text{diag}\left(\delta_1(\boldsymbol{y}, \boldsymbol{\epsilon}), \delta_2(\boldsymbol{y}, \boldsymbol{\epsilon}), \ldots, \delta_n(\boldsymbol{y}, \boldsymbol{\epsilon})\right)$$

where uncertainty function:
$$\delta_i(\boldsymbol{y}, \boldsymbol{\epsilon}) = \delta_0 \sqrt{1 + \alpha \sum_{j=1}^{n-1} \epsilon_j^2}$$

\textbf{Corresponding $A$ matrix:}
$$A(\lambda, \boldsymbol{\epsilon}) = A_0(\boldsymbol{\epsilon}) + \lambda A_1(\boldsymbol{\epsilon})$$

where:
$$A_0(\boldsymbol{\epsilon}) = \begin{pmatrix}
0 & e^{y_1 + \epsilon_1} & 0 & \cdots & 0 \\
-e^{y_1 + \epsilon_1} & 0 & e^{y_2 + \epsilon_2} & \cdots & 0 \\
0 & -e^{y_2 + \epsilon_2} & 0 & \cdots & 0 \\
\vdots & \ddots & \ddots & \ddots & \vdots \\
0 & \cdots & 0 & -e^{y_{n-1} + \epsilon_{n-1}} & 0
\end{pmatrix}$$

$$A_1(\boldsymbol{\epsilon}) = \text{diag}\left(\dot{\delta}_1, \dot{\delta}_2, \ldots, \dot{\delta}_n\right)$$

where $\dot{\delta}_i = \frac{d}{dt}\delta_i(\boldsymbol{y}(t), \boldsymbol{\epsilon}(t))$.

\textbf{Zero curvature condition verification:}

Verify $\frac{dL}{dt} = [A, L]$ order by order:

\textbf{$O(\lambda^0)$ term:} For $(1,2)$ element:
\begin{align*}
\text{LHS:} \quad &\frac{d}{dt}e^{y_1 + \epsilon_1} = e^{y_1 + \epsilon_1}(\dot{y}_1 + \dot{\epsilon}_1) \\
&= e^{y_1 + \epsilon_1}((p_1 - p_2) + \dot{\epsilon}_1)
\end{align*}

\begin{align*}
\text{RHS:} \quad &[A_0, L_0]_{12} = A_{11}^{(0)} L_{12}^{(0)} - L_{11}^{(0)} A_{12}^{(0)} + A_{12}^{(0)} L_{22}^{(0)} - L_{12}^{(0)} A_{22}^{(0)} \\
&= 0 \cdot e^{y_1 + \epsilon_1} - p_1 \cdot e^{y_1 + \epsilon_1} + e^{y_1 + \epsilon_1} \cdot p_2 - e^{y_1 + \epsilon_1} \cdot 0 \\
&= e^{y_1 + \epsilon_1}(p_2 - p_1)
\end{align*}

Equality holds if and only if:
$$\dot{\epsilon}_1 = 2(p_1 - p_2)$$

This gives dynamical evolution law of observation error.

\textbf{Integrability condition:} System preserves Lax integrability if and only if:
$$\|\boldsymbol{\epsilon}\|_{\infty} \leq \epsilon_{\text{crit}} = \min_{1 \leq i \leq n-1} \frac{|p_i - p_{i+1}|}{2\alpha\delta_0}$$

\textbf{Physical interpretation:} This critical condition ensures observation uncertainty does not destroy system's intrinsic integrable structure, preserving core geometric properties of Toda lattice.
\end{example}

\begin{example}[Observation-Constrained Dynamics of Rigid Body]
Consider rigid body motion on $SO(3)$ where only certain components of angular velocity can be observed. Let observation map:
$$h: T^*SO(3) \to \mathbb{R}^2, \quad h(R, \Pi) = (w_1, w_2)$$
where $w = I^{-1}\Pi$ is angular velocity and $I$ is inertia tensor.

Observation fiber bundle structure reveals symmetry reduction under partial angular velocity observation. System preserves $S^1$ subgroup symmetry of $SO(3)$ (rotation around third axis), leading to 2-dimensional reduced phase space.
\end{example}

\subsection{Connection with Modern Safety Control Theory} \label{{sec:applications}}

Our observation-induced fiber bundle theory provides rigorous mathematical foundation for modern safety-critical control. Learning unknown dynamics in environments with local constraint information uncertainty is fundamental challenge in modern robotics, where traditional methods often cannot fully utilize geometric structure inherent in local measurements and constraints. Our theoretical framework provides natural solution to such problems by unifying measurement, constraints, and dynamics learning within fiber bundle geometric structure. Following are successful cases demonstrated in \cite{zheng2025learning} under guidance of this theoretical framework:

\begin{example}[Continuum Control of Soft Robotics]
Consider soft robot system using Material Point Method (MPM), whose geometric structure perfectly matches our theoretical framework:

\textbf{Geometric setup:}
\begin{itemize}
\item Base manifold: $M = \mathcal{C}(D, \mathbb{R}^3)$ (continuum configuration space)
\item Observation space: $Y = \mathbb{R}^6$ (measurements from 6 sensors)
\item Observation map: $h(\phi) = (\|\phi(s_1) - x_{\text{obs}}\|, \ldots, \|\phi(s_6) - x_{\text{obs}}\|)$
\item Observation uncertainty: $\delta(x) = \delta_0(1 + \|\phi\|_{H^2(D)}/\|\phi_0\|_{H^2(D)})^{-1}$
\end{itemize}

This system satisfies properness condition (C4c) because control precision of higher-order deformation modes naturally decreases. Fiber bundle $E$ encodes geometric coupling between material deformation and sensor errors, enabling complex multi-physics problems to be handled within unified geometric framework. Non-degenerate layout of 6 sensors ensures feasibility of local position reconstruction, with local diffeomorphism property of observation map providing geometric foundation for measurement-aware Control Barrier Functions.
\end{example}

\begin{example}[Attitude Control of 7-DOF Robotic Arm]
High degree-of-freedom robotic arm system demonstrates effectiveness of our theory in complex joint spaces:

\textbf{Geometric setup:}
\begin{itemize}
\item Base manifold: $M = \mathbb{R}^7 \times \mathbb{R}^7$ (14-dimensional phase space of joint angles and velocities)
\item Observation space: $Y = SE(3)$ (6-dimensional pose of end-effector)
\item Observation map: $h(q, \dot{q}) = \text{ForwardKinematics}(q)$
\item Fiber bundle implementation: Using $SO(3)$ Lie group structure to encode attitude measurement uncertainty
\end{itemize}

Key to system success lies in careful selection of working region: by avoiding singular configurations, observation map maintains local diffeomorphism property within workspace. Joint rotational symmetry of robotic arm is naturally represented on fiber bundle, with corresponding conservation laws becoming safety guarantees in actual control. This geometric structure transforms traditionally complex high-dimensional attitude control problem into standard geometric problem on fiber bundle.
\end{example}

\begin{example}[Spatial Navigation of Quadrotor System]
Three-dimensional motion control of quadrotor UAV verifies theory's superiority in dynamic environment perception:

\textbf{Geometric setup:}
\begin{itemize}
\item Base manifold: $M = \mathbb{R}^3 \times SO(3) \times \mathbb{R}^6$ (position, attitude, velocity)
\item Observation space: $Y = \mathbb{R}^4$ (measurements from 4 depth sensors)
\item Observation map: $h(p, R, v) = (\|p - p_{\text{obs},i}\|)_{i=1}^4$
\item Observation uncertainty: $\delta(x) = \delta_0(1 + \|p - p_{\text{base}}\|/R_{\text{comm}})^{-2}$
\end{itemize}

This system satisfies properness condition (C4b) with decay exponent $\beta = 2$ precisely reflecting physical decay law of communication power. Modified cutoff function design ensures complete control geometry and $C^\infty$ smoothness within communication range:
$$\chi_{R_{\text{comm}}}(p) = \begin{cases}
1, & \|p - p_{\text{base}}\| \leq R_{\text{comm}} \\
\eta\left(\frac{\|p - p_{\text{base}}\| - R_{\text{comm}}}{R_{\text{comm}}}\right) \left(\frac{R_{\text{comm}}}{\|p - p_{\text{base}}\|}\right)^3, & \text{otherwise}
\end{cases}$$

Three-dimensional geometric configuration of 4 depth sensors guarantees local uniqueness of position determination, providing theoretical guarantee for safe navigation in dynamic environments.
\end{example}

\textbf{Success mechanism of theoretical framework:}

Success of these applications reflects core advantages of observation-induced fiber bundle theory. First, \textbf{geometrization of observation uncertainty} completely transforms traditional control paradigm: converting measurement error from external disturbance to intrinsic variation in fiber coordinates, making "measurement-induced bundle structure" organic component of system geometry. Second, \textbf{unification of constraints and observation} enables Control Barrier Functions to naturally depend on observation uncertainty, providing more refined safety boundaries in fiber bundle geometry. Third, \textbf{preservation of physical symmetry} ensures system's intrinsic symmetries are naturally represented in fiber bundle framework, with corresponding conservation laws becoming safety guarantees in control.

\textbf{Practical guidance value:}

Success of theoretical framework lies not only in mathematical elegance, but also in its guidance significance for actual system design: In sensor configuration optimization, local diffeomorphism condition provides geometric constraints for sensor layout; In working region design, geometric principles for singular point avoidance and boundary handling; In control algorithm design, geometry-preserving algorithms automatically maintain constraint consistency; In safety analysis, conservation laws and symmetries provide theoretical guarantees for long-term stability.

This deep compatibility between theory and application indicates that observation-induced fiber bundles are not only abstract mathematical constructions, but accurate characterization of intrinsic geometric structure of modern safety-critical control systems, providing solid geometric foundation for dynamics learning under local constraints and uncertain perception conditions.

\section{Theory-Guided Numerical Implementation Methods}
\label{sec:numerical_methods}

This section establishes bridge between observation-induced fiber bundle theory and practical numerical implementation, demonstrating how to transform abstract geometric concepts into computable algorithmic principles. Our core insight is: numerical methods that preserve fiber bundle geometric structure can automatically maintain constraint consistency and physical invariants of system.

\subsection{Core Principles of Geometry Preservation}
\label{subsec:geometric_principles}

Traditional numerical methods often treat constraints as algebraic conditions, but our theory reveals intrinsic geometric connection between constraints and observation uncertainty. Based on this recognition, we propose core principles of geometry-preserving numerical integration.

Numerical integration must preserve geometric structure of observation fiber bundle, ensuring trajectories always lie within correct geometric space. Specifically, any numerical solution $(x_n, \xi_n)$ should satisfy $\xi_n \in T^*_{h(x_n)}Y$ and $\|\xi_n\|_{\rho_{x_n}} \leq \delta(x_n)$. This fiber bundle structure preservation is fundamental constraint in algorithm design.

Within safety region, algorithm should approximately preserve symplectic structure $\omega_E$ with error control at $O(h^2)$ level. This ensures long-term stability of Hamiltonian system and approximate satisfaction of energy conservation. When numerical integration deviates from constraint manifold, geometric projection rather than algebraic correction should be used, with projection direction chosen as geometric dual of constraint gradient under fiber bundle metric, ensuring minimal geometric disturbance.

Time step selection needs to adapt to local changes in observation uncertainty. In regions with lower observation precision, i.e., regions where $\delta(x)$ is larger, algorithm should automatically reduce step size to ensure preservation of geometric structure. This adaptive mechanism reflects natural manifestation of geometric properties of observation-adaptive connection in numerical implementation.

\subsection{Algorithm Framework Overview}
\label{subsec:algorithm_overview}

Based on above geometric principles, we design geometry-preserving integration algorithm. Core idea of this algorithm is to first use observation-adaptive connection to calculate Hamiltonian vector field at each time step, ensuring correct handling of fiber bundle geometry. Covariant dynamics calculation involves precise computation of base manifold connection coefficients $\Gamma^k_{ij}$ and fiber connection coefficients $\Gamma^a_{bc}$, where use of covariant derivative $\nabla_i\rho_{ab} = \partial_i\rho_{ab} - \Gamma^k_{ij}\frac{\partial\rho_{ab}}{\partial x^k}$ ensures metric compatibility $\nabla\rho = 0$.

Second step of algorithm is to perform time integration based on current geometric structure, generating predicted solution $(x^*_{n+1}, \xi^*_{n+1})$. This step uses standard explicit integration scheme, but velocity vector calculation is entirely based on geometric structure on fiber bundle. Accuracy of prediction step directly depends on computational precision of connection coefficients and correct lifting of Hamiltonian vector field.

If predicted solution violates constraint condition $\Phi \geq 0$ or exceeds fiber bundle boundary $\|\xi\|_{\rho_x} \leq \delta(x)$, algorithm performs projection correction with minimal geometric disturbance. For constraint violation, projection direction is chosen as $\nu = g_E^{-1}(\nabla\Phi)/\|g_E^{-1}(\nabla\Phi)\|_E$, where $g_E$ is Riemannian metric on fiber bundle. For fiber bundle boundary violation, projection simplifies to radial projection onto fiber boundary $\|\xi\|_{\rho_x} = \delta(x)$.

Finally, algorithm dynamically adjusts time step based on geometry-preserving error. Geometric error measure includes metric compatibility deviation $|\nabla_i\rho_{jk}|$ and symplectic structure deviation $|\omega_{E,n} - \omega_E|$. When these errors exceed preset threshold, algorithm automatically reduces time step and recalculates, ensuring accurate preservation of geometric structure.

\textbf{Compatibility with application verification:} This algorithmic framework is fully compatible with application cases in Section \ref{sec:applications}. In soft robot systems, geometric projection naturally handles elastic constraints of materials, with covariant derivatives correctly handling changes in geometric metrics during deformation. For robotic arm control problems, symplectic structure preservation ensures correct energy transfer and approximate satisfaction of conservation laws in joint space. In quadrotor navigation applications, adaptive step size mechanism can handle spatial variation of observation uncertainty $\delta(x)$ caused by dynamic changes in communication range.

\subsection{Convergence and Practicality Analysis}
\label{subsec:convergence_analysis}

\begin{theorem}[Convergence of Geometry-Preserving Algorithm]
\label{thm:convergence}
Let observation-induced fiber bundle $(E_{\mathcal{W}}, \pi, \omega_E)$ satisfy working region conditions, Hamiltonian $H \in C^3(E_{\mathcal{W}})$, constraint function $\Phi \in C^3(E_{\mathcal{W}})$. If constraint gradient satisfies $\|d\Phi\| \geq \mu > 0$ near $\partial\mathcal{S}$, connection curvature is bounded $\|\mathcal{R}^\nabla\|_{L^\infty} \leq K < \infty$, then geometry-preserving algorithm has following properties:

Constraint preservation error satisfies $|\Phi(x_n, \xi_n)| \leq C_1 h^2$, where constant $C_1$ depends on $\|D^2\Phi\|_{L^\infty}$ and bound of Hamiltonian vector field $\|X_H\|_{L^\infty}$. Symplectic structure preservation error satisfies $|\omega_{E,n} - \omega_E| \leq C_2 h^2$, ensuring approximate preservation of long-term Hamiltonian structure. Global convergence satisfies $\|(x_n, \xi_n) - (x(t_n), \xi(t_n))\|_E \leq C h e^{Lt_n}$, where Lipschitz constant $L$ is determined by smoothness of Hamiltonian vector field.

Precise expressions of convergence constants and complete proof see Appendix \ref{appendix:convergence_proof}.
\end{theorem}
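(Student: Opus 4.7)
The plan is to view a single step of the algorithm as a composition $\Psi_h = P \circ \Phi_h^{\text{pred}}$, where the predictor $\Phi_h^{\text{pred}}$ is a time-$h$ lift of the Hamiltonian vector field through the observation-adaptive connection and $P$ is the geometric projection onto the admissible region. I would then decompose the total error into three contributions: the predictor's local truncation error against the exact flow, the displacement produced by the projection step, and the accumulated global drift propagated by a discrete Gronwall argument. The $C^3$ regularity of $H$ and $\Phi$ together with the curvature bound $\|\mathcal{R}^\nabla\|_{L^\infty}\le K$ and the boundedness of the structure functions $C_{iab},D_{iab},E_{iab},F_{iab}$ established earlier make every constant arising from Taylor expansion uniformly controlled on the compact safety region $\overline{\mathcal{W}_{\text{safe}}}$.

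For the constraint bound, a Taylor expansion of the exact solution of $\dot z = X_H(z)$ against the predictor yields $\Phi_h^{\text{pred}}(z) = \varphi_h(z) + O(h^3)$, so that $\Phi(\Phi_h^{\text{pred}}(z)) = \tfrac{h^2}{2}\{H,\{H,\Phi\}\}(z) + O(h^3)$ whenever $\Phi(z)=0$, bounded by a constant involving $\|D^2\Phi\|_{L^\infty}$ and $\|X_H\|_{L^\infty}^2$. The projection step, constructed as motion along $g_E^{-1}(\nabla\Phi)$, is then controlled by the implicit function theorem using $\|d\Phi\|\ge\mu>0$: the distance $\|P(z)-z\|_E$ is at most $\mu^{-1}|\Phi(z)|$, which confirms the $O(h^2)$ constraint error at the end of each step and provides the asserted constant $C_1$.

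For the symplectic bound, I would adopt a backward-error-style analysis. The predictor is symplectic up to curvature corrections of order $h^2$, because horizontal transport by a metric-compatible, vanishing-torsion connection preserves $\omega_E$ up to a commutator term controlled by $\|\mathcal{R}^\nabla\|$; the projection perturbs $\omega_E$ by a further term of order $|\Phi|\cdot\|D^2\Phi\|/\mu$, which is $O(h^2)$ by the previous paragraph. Adding these gives $|\omega_{E,n}-\omega_E|\le C_2 h^2$. For the global estimate, Lipschitz continuity of $X_H$ on $\overline{\mathcal{W}_{\text{safe}}}$, together with the per-step consistency bound, yields $e_{n+1}\le(1+Lh)e_n + Ch^2$ for $e_n=\|(x_n,\xi_n)-(x(t_n),\xi(t_n))\|_E$, and iteration produces $e_n\le Ch\,e^{Lt_n}$ in the standard way.

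The main obstacle is showing that the projection step does not destroy symplecticity: projections along a gradient direction are generically non-symplectic, so the $O(h^2)$ symplectic bound survives only because the projection distance is itself $O(h^2)$ \emph{and} the fiber bundle metric $g_E$ is geometrically aligned with $\omega_E$ through the adaptive connection. Making this rigorous requires a careful decomposition of $P$ into vertical and horizontal components and a use of the non-degeneracy theorem on $\mathcal{W}_{\text{safe}}$ to certify that the symplectic distortion induced by $P$ factors through $|\Phi|$ rather than through $\|\nabla\Phi\|$. A secondary technical point is the uniformity of constants across $\mathcal{W}_{\text{safe}}$: one must invoke the $L^\infty$ bounds on $C_{iab},D_{iab},E_{iab},F_{iab}$ so that the Taylor remainder, the projection displacement, and the Lipschitz constant $L$ are all controlled by a single data-dependent modulus, rather than allowed to blow up as the trajectory approaches $\partial\mathcal{W}$.
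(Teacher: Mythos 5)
Your skeleton matches the paper's proof in two of its three parts: for the constraint bound the paper likewise Taylor-expands $\Phi$ at the predicted point and kills the $O(h)$ term by the same tangency condition you invoke (the paper phrases it as $\omega_E(X_H,d\Phi)=0$, i.e.\ $\{H,\Phi\}=0$ at the step point, yielding $C_1=\tfrac12\|D^2\Phi\|_{L^\infty}\|X_H\|_{L^\infty}^2$), and for the global estimate the paper uses exactly your recursion $e_{n+1}\le(1+hL_F)e_n+Ch^2$ plus discrete Gr\"onwall. Where you genuinely diverge is the symplectic estimate: the paper does \emph{not} attempt a backward-error analysis of the one-step map. It simply bounds the change of $\omega_E$ evaluated along the numerical trajectory, $|\omega_E(x_{n+1},\xi_{n+1})-\omega_E(x_n,\xi_n)|$, by a Lipschitz-in-time expansion (whose leading term is in fact $O(h)$ as written, so the claimed $C_2h^2$ there is the weakest link of the paper's own argument), and it never analyzes how the gradient projection distorts symplecticity. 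Your reading of $|\omega_{E,n}-\omega_E|$ as the deviation of the numerical flow from being a symplectomorphism is the more standard and stronger statement, and the obstacle you flag — that a projection along $g_E^{-1}(\nabla\Phi)$ is generically non-symplectic, so the bound must factor through $|\Phi|=O(h^2)$ and the alignment of $g_E$ with $\omega_E$ on $\mathcal{W}_{\text{safe}}$ — is real extra work the paper avoids by its cruder interpretation. Your explicit projection-displacement bound $\|P(z)-z\|_E\le\mu^{-1}|\Phi(z)|$ via $\|d\Phi\|\ge\mu$ is also an addition the paper leaves implicit, and it is needed to make the per-step consistency constant in the Gr\"onwall step honest.

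One small correction: the predictor in the paper's algorithm is an explicit Euler step, so it agrees with the exact flow only to $O(h^2)$, not $O(h^3)$ as you assert. This does not damage your constraint bound — the direct Taylor expansion of $\Phi$ at $z+hv_n$ together with the tangency condition already gives $\tfrac{h^2}{2}D^2\Phi(v_n,v_n)+O(h^3)$, which is exactly the paper's route — but the intermediate claim $\Phi_h^{\text{pred}}(z)=\varphi_h(z)+O(h^3)$ should be dropped or weakened to $O(h^2)$, and the local truncation error fed into the Gr\"onwall recursion should correspondingly be quoted as $O(h^2)$, which is what both you and the paper in fact use.
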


Algorithm demonstrates good numerical performance in typical robotic systems. Computational complexity mainly comes from calculation of connection coefficients, with time complexity $O(n^2 + k^2)$ per step, where $n = \dim M$ is base manifold dimension and $k = \dim Y$ is observation space dimension. Memory requirement is mainly for storing connection coefficient matrices and metric information, with space complexity $O(n^2k)$. Numerical stability is determined by Courant-Friedrichs-Lewy condition $h \leq C_{\text{CFL}} \min(\ell_0/\|X_H\|, 1/\sqrt{K})$, where $\ell_0 = \inf_{x \in \mathcal{W}} \delta(x)$ is minimum observation precision and $K$ is connection curvature upper bound.

In practical applications, algorithm performance is highly consistent with theoretical predictions. For 7-DOF robotic arm system, typical parameters $\ell_0 \sim 1$cm, $\|X_H\| \sim 1$m/s produce time step approximately $h \sim 10^{-3}$s, ensuring constraint error control at $10^{-6}$ level. Continuum characteristics of soft robots require smaller time step $h \sim 10^{-4}$s, but geometric preservation accuracy significantly improves stability of long-term simulation. Fast dynamics of quadrotor system $\|X_H\| \sim 10$m/s requires corresponding adjustment of time step, but adaptive mechanism ensures algorithm robustness under different flight states.

\subsection{Bridge Between Theory and Practice}
\label{subsec:theory_practice_bridge}

Our geometric algorithm provides solid theoretical foundation for ICML work \cite{zheng2025learning} in Section \ref{sec:applications}. Geometry preservation principles can be naturally combined with neural network optimization by introducing geometric terms in loss function to ensure learning process respects fiber bundle geometry:

\begin{align}
L_{\text{total}} &= L_{\text{task}} + \lambda_{\text{geo}} L_{\text{geometric}} \label{eq:ml_loss}\\
L_{\text{geometric}} &= \|\Phi(x, \xi)\|^2 + \alpha\|\nabla_i\rho_{jk}\|^2 + \beta\|\omega_E - \omega_{\text{ref}}\|^2 \label{eq:geo_loss}
\end{align}

where geometric loss term $L_{\text{geometric}}$ directly derives from our theoretical analysis. Constraint term $\|\Phi(x, \xi)\|^2$ ensures learning trajectories satisfy safety constraints, metric compatibility term $\|\nabla_i\rho_{jk}\|^2$ guarantees consistency of fiber bundle geometry, symplectic structure term $\|\omega_E - \omega_{\text{ref}}\|^2$ maintains fundamental properties of Hamiltonian system.

Convergence analysis of Theorem \ref{thm:convergence} provides theoretical safety boundaries for learning algorithms, ensuring critical geometric constraints are not violated even during learning process. This theoretical guarantee is crucial for safety-critical applications, particularly in fields like robotic manipulation and autonomous navigation. Modular design of algorithm allows customization according to specific application requirements, maintaining both theoretical rigor and engineering practicality.

\begin{remark}[Practical Value of Theory Guidance]
\label{rem:practical_value}
Success of geometry-preserving algorithm reflects deep integration of theory and practice. By transforming abstract fiber bundle geometry into concrete computational principles, we not only solve numerical implementation problems, but also provide new design philosophy for modern control system design: let numerical methods automatically respect system's intrinsic geometric structure. Value of this methodology far exceeds specific algorithms themselves, opening new research directions for intersection of computational geometry and numerical analysis.
\end{remark}

\section{Conclusions and Prospects}

\subsection{Major Theoretical Achievements}

This paper establishes complete geometric theoretical framework for constrained Hamiltonian systems on observation-induced fiber bundles, achieving following core results. First, we geometrize dynamics of incompletely observed systems through observation-induced fiberization, providing natural mathematical framework for handling information incompleteness. Key insight of this geometrization is internalizing observation uncertainty as variation in fiber coordinates rather than external disturbance, thus revealing deep geometric connection between constraints and observation.

Second, we develop complete symplectic geometric theory on fiber bundles, including construction of observation-adaptive connections, existence conditions for fiber bundle symplectic structure, and characterization of corresponding Poisson structure. This theory naturally extends Marsden-Weinstein symplectic reduction to observation constraint settings and establishes principal bundle representation theory for observation symmetry groups. Particularly, we prove that within working region, observation fiber bundle carries well-defined symplectic structure that maintains non-degeneracy within safety region.

Third, we establish complete integrability theory for observation-constrained systems, giving geometric necessary and sufficient conditions and constructing corresponding Lax pair structures. This result extends classical Arnold-Liouville theorem to fiber bundle setting, revealing influence patterns of observation uncertainty on integrable structure. Our analysis shows that under appropriate small parameter conditions, torus structure of classical integrable systems is preserved under observation constraints, undergoing only regular geometric deformation.

Finally, we develop observation-dependent symmetry and conservation law theory, establishing Noether's theorem on fiber bundles and characterizing corresponding moment map theory. These results provide geometric foundation for understanding conservation laws under observation constraints and powerful tools for symmetry analysis of actual systems.

\subsection{Theoretical Significance and Innovation Value}

From mathematical perspective, this theory achieves unification of multiple important branches. It connects Dirac constraint theory with modern fiber bundle geometry, providing completely new geometric perspective for classical constrained dynamics. Traditional Dirac theory mainly handles constraint submanifolds in configuration space, while our framework naturally encodes coupling between constraints and observation information in geometric structure of fiber bundles, revealing deeper geometric essence of constrained dynamics. Additionally, this theory opens new avenue for integrable system research; through geometric analysis of observation fiber bundles, we can understand integrability preservation mechanisms under incomplete information.

From application perspective, this theoretical framework has broad practical significance. In modern safety-critical control systems, traditional deterministic control theory often struggles with challenges brought by observation incompleteness. Our geometric framework provides unified mathematical tools for handling state dynamics, observation constraints, and safety boundaries, laying theoretical foundation for designing robust control algorithms. Particularly, this framework naturally adapts to complex requirements of modern robotic systems such as multi-sensor fusion and dynamic observation configuration.

Innovation of theory is also manifested in its intrinsic geometric invariance. Unlike traditional numerical methods, handling based on fiber bundle geometry automatically preserves system's symplectic structure, conservation laws, and symmetries, providing important advantages for long-time numerical simulation and real-time control. Our developed geometry-preserving algorithms can automatically maintain geometric integrity of system while preserving constraint consistency.

\subsection{Practical Application Prospects}

This theoretical framework demonstrates important application prospects in multiple practical fields. In classical mechanics, this theory provides new mathematical tools for analysis of partially observed systems, particularly suitable for mechanical systems with limited sensor configuration or observation blind spots. In modern control theory, observation-induced fiber bundles provide geometric foundation for safety-critical system design, capable of unifying state estimation, constraint satisfaction, and safety guarantee.

In robotics applications, our theory provides unified mathematical framework for systems such as soft robots, multi-joint robotic arms, and mobile robots. By geometrizing observation uncertainty, this theory can naturally handle complex situations such as sensor precision varying with environment, multi-modal observation fusion, and dynamic sensor reconfiguration. Particularly, successful combination of this framework with \cite{zheng2025learning} demonstrates its practical value in modern machine learning-assisted control.

In broader engineering applications, this theory provides new solution approaches for complex control problems in aerospace, energy systems, and biomedical engineering. Its geometric invariance and intrinsic robustness make this method particularly suitable for application scenarios with extremely high safety and reliability requirements.

\subsection{Theoretical Limitations and Challenges}

Despite important progress, this theory still has some limitations that need honest acknowledgment. First, current framework mainly applies to deterministic observation uncertainty; handling truly random noise (such as Wiener processes) still requires further development of stochastic differential geometry. In practical applications, we bridge this gap through engineering methods such as statistical bounds, time averaging, and robust design, but complete stochastic theory remains open problem.

Second, theoretical rigor is mainly guaranteed within working regions that avoid singular configurations. While this localized treatment conforms to actual engineering requirements, extension to global theory requires solving complex topological and geometric problems. Although boundary handling has clear geometric intuition and engineering reasonableness, its mathematical rigorization still requires deeper singular perturbation analysis.

Third, theory requires systems to have sufficient smoothness, which may be overly restrictive in some practical applications. Handling finite smoothness systems requires developing geometric theory on Sobolev spaces, involving frontier mathematical problems such as geometric analysis of weak solutions.

\subsection{Future Research Directions}

Based on current achievements and existing challenges, we identify following important research directions. At theoretical level, establishing stochastic fiber bundle theory in Stratonovich/Itô sense is most urgent task, which will enable theory to handle truly random observation noise. Meanwhile, developing observation constraint theory on non-compact manifolds and weak smoothness frameworks will greatly expand theory's applicability.

In geometric analysis direction, in-depth study of global geometric properties of boundary degeneration, convergence of singular limits, and convergence to classical theory when observation uncertainty approaches zero are important mathematical problems. These studies will perfect mathematical foundation of theory and may reveal new geometric phenomena.

In application expansion, establishing quantum correspondence of observation-constrained systems through geometric quantization is exciting direction that will connect classical constrained dynamics with quantum control theory. Additionally, exploring applications of non-commutative geometry in observation constraints will provide new perspectives for understanding relationship between operator algebras and dynamical systems.

From computational perspective, developing more efficient geometry-preserving algorithms, studying fast solution methods for large-scale systems, and exploring fusion of machine learning with geometric algorithms are all important research directions. These developments will enable theory to be more widely applied in actual complex systems.

Observation-induced fiber bundle theory opens new avenue for geometric understanding of constrained dynamics, with its core insight—geometrizing observation uncertainty—providing profound mathematical perspective for understanding intrinsic structure of complex constrained systems. As theory further develops and improves, we expect it to play increasingly important role in intersection of mathematical physics, control theory, and geometric analysis, and provide powerful mathematical tools for solving complex control problems in modern technology.

\appendix

\section{Existence and Classification Theory}
\subsection{Topological Obstruction Analysis}

\begin{remark}[Technical Simplification Note]
To simplify the proof of existence theorem, we adopt the special case of observation space $Y = \mathbb{R}^k$ in this section.
For general Riemannian manifold $Y$, Lemma \ref{lem:Whitney_emb} has proven that it can be equivalently handled through isometric embedding $\iota: Y \hookrightarrow \mathbb{R}^N$.
All previous theoretical results hold for general observation manifolds.
\end{remark}

\begin{assumption}[Observation Space Convention]
Without loss of generality, assume observation space $Y = \mathbb{R}^k$ equipped with standard Euclidean metric.
\end{assumption}

\begin{lemma}[Topological Equivalence of Whitney Embedding] \label{lem:Whitney_emb}
Let $Y$ be $k$-dimensional compact manifold, $\iota: Y \hookrightarrow \mathbb{R}^N$ be isometric embedding ($N \geq 2k+1$). Then for any observation map $h: M \to Y$, there exists vector bundle isomorphism:
$$h^*TY \cong (h \circ \iota)^*T\mathbb{R}^N|_{\iota(Y)}$$
and characteristic class preservation: $e(h^*TY) = e((h \circ \iota)^*T\mathbb{R}^N|_{\iota(Y)})$
\end{lemma}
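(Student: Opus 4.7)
The plan is to reduce the abstract characteristic class comparison on $Y$ to a concrete computation on its Euclidean embedding by exploiting the tubular neighborhood decomposition of $T\mathbb{R}^N$ near $\iota(Y)$. First I would invoke Whitney's embedding theorem to fix the isometric embedding $\iota: Y \hookrightarrow \mathbb{R}^N$ with $N \geq 2k+1$, then apply the tubular neighborhood theorem to produce the orthogonal Whitney splitting $T\mathbb{R}^N|_{\iota(Y)} \cong \iota_*(TY) \oplus \nu(Y)$, where $\nu(Y)$ is the normal bundle of the embedding. Since $\iota$ is isometric, $\iota_*(TY) \cong TY$ as Riemannian vector bundles, and the splitting is canonical up to homotopy.

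Next I would pull the splitting back along $h \circ \iota: M \to \mathbb{R}^N$, obtaining the natural isomorphism $(h \circ \iota)^* T\mathbb{R}^N|_{\iota(Y)} \cong h^*TY \oplus h^*\nu(Y)$ as Whitney sums of vector bundles over $M$. The statement in the lemma should be read in the stable/fibrewise-compatible sense: the rank-$(N-k)$ normal summand $h^*\nu(Y)$ plays the role of the stabilization, so the usable content of the isomorphism is that $h^*TY$ is a canonical direct summand of $(h \circ \iota)^* T\mathbb{R}^N|_{\iota(Y)}$, with the complementary summand determined by the normal geometry of $\iota$.

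For the characteristic class equality I would invoke multiplicativity of the Euler class under Whitney sums, $e\bigl((h \circ \iota)^* T\mathbb{R}^N|_{\iota(Y)}\bigr) = e(h^*TY) \smile e(h^*\nu(Y))$, reducing the claim to $e(h^*\nu(Y)) = 1$. This in turn I would establish by choosing $N$ in the stable range so that $\nu(Y)$ admits a nowhere-vanishing section (equivalently, splits off a trivial line bundle after a suitable general-position perturbation of $\iota$ within its isotopy class); pullback by $h$ preserves the nowhere-zero section, forcing the pulled-back Euler class to vanish in positive degree and leaving the product equal to $e(h^*TY)$. Orientability of the involved bundles, required to define $e$, I would handle by noting that $T\mathbb{R}^N$ is canonically oriented and that an orientation on $TY$ determines a compatible orientation on $\nu(Y)$ via the ambient orientation.

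The main obstacle is the literal rank mismatch in the stated bundle isomorphism ($k$ versus $N$), which forces the stable/summand interpretation described above and makes the triviality of $e(h^*\nu(Y))$ the actual content of the proof rather than a formal consequence. A secondary technical point is verifying that the required general-position trivialization of $\nu(Y)$ can be arranged while keeping $\iota$ isometric, so that the Riemannian compatibility used by the rest of the excerpt (in particular in the observation-adaptive connection construction) is preserved; this should follow by approximating an arbitrary embedding by an isometric one in $C^1$-norm using Nash--Kuiper and checking that characteristic classes are $C^0$-invariants of the bundle structure.
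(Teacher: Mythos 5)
Your splitting strategy is a reasonable alternative to the paper's construction (the paper simply defines $\Phi(x,v)=(x,d\iota_x(v))$ with $\tilde h=\iota\circ h$ and treats it as the bundle identification), and you are right to flag the rank mismatch that the paper glosses over: the honest content of the first claim is that $h^*TY$ sits inside $(\iota\circ h)^*T\mathbb{R}^N|_{\iota(Y)}$ as the canonical tangential summand of the Whitney decomposition $T\mathbb{R}^N|_{\iota(Y)}\cong \iota_*TY\oplus\nu(Y)$. However, your Euler class step contains a genuine error. You reduce the claim to $e(h^*\nu(Y))=1$, but the Euler class of a bundle of positive rank $N-k$ lives in $H^{N-k}(M)$ and can never equal $1$; what a nowhere-vanishing section (which here exists automatically, since $\operatorname{rank}\nu = N-k\geq k+1>\dim Y$, so no isotopy or Nash--Kuiper perturbation of $\iota$ is needed) gives you is $e(h^*\nu(Y))=0$. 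Feeding that into multiplicativity yields
$$e\bigl((\iota\circ h)^*T\mathbb{R}^N|_{\iota(Y)}\bigr)=e(h^*TY)\smile e(h^*\nu(Y))=0,$$
which is consistent with the ambient restriction being (a pullback of) a trivial bundle, but it does not and cannot produce the asserted equality with $e(h^*TY)$ — the two classes do not even live in the same degree. So the multiplicative route proves only that the full rank-$N$ pullback has vanishing Euler class, not the lemma's equation.

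The way the equality is actually meant (and used later in the appendix) is as a statement about the tangential summand: the Euler class of $h^*TY$ equals the Euler class of its image $\Phi(h^*TY)\subset(\iota\circ h)^*T\mathbb{R}^N$ under the canonical splitting, which is immediate from naturality of $e$ under the bundle isomorphism onto that summand; the normal summand is simply discarded, not cancelled multiplicatively. That is essentially what the paper's proof does (its own Whitney-sum bookkeeping is written additively and its ``parallelizable normal bundle'' justification is spurious, but the operative step is the identification via $\Phi$). To repair your write-up, keep your splitting and your stable reading of the isomorphism, drop the claim $e(h^*\nu(Y))=1$ and the attempt to trivialize $\nu$ by perturbing $\iota$, and state the conclusion as invariance of $e(h^*TY)$ under the canonical identification with the tangential summand; also note that the composite in the statement should read $\iota\circ h$ (the paper's $\tilde h$), and that isometry of $\iota$ plays no role in this topological argument — it is only needed for the metric compatibility used elsewhere in the framework.
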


\begin{proof}
\textbf{Step 1: Differential geometric properties of isometric embedding}
Isometric embedding $\iota$ satisfies: $\iota^*g_{\mathbb{R}^N} = g_Y$, where $g_Y, g_{\mathbb{R}^N}$ are metrics on $Y$ and $\mathbb{R}^N$ respectively.

This means $d\iota: TY \to T\mathbb{R}^N$ is isometric embedding on each fiber.

\textbf{Step 2: Isomorphism of pullback bundles}
Define map $\tilde{h} = \iota \circ h: M \to \mathbb{R}^N$. Then there exists natural isomorphism:
$$\Phi: h^*TY \to \tilde{h}^*T\mathbb{R}^N|_{\iota(Y)}$$
$$\Phi(x, v) = (x, d\iota_x(v))$$

Since $d\iota$ is linear isometry on fibers, $\Phi$ is vector bundle isomorphism.

\textbf{Step 3: Preservation of characteristic classes}
Euler class is defined through Poincaré duality:
$$e(h^*TY) = \Phi^*e(\tilde{h}^*T\mathbb{R}^N|_{\iota(Y)})$$

Since $\iota(Y)$ is submanifold of $\mathbb{R}^N$, Euler class of restricted bundle $T\mathbb{R}^N|_{\iota(Y)}$ is determined by Gauss map of normal bundle. Isometric embedding condition ensures:
$$e(T\mathbb{R}^N|_{\iota(Y)}) = e(TY) + e(N\iota(Y))$$

where $N\iota(Y)$ is normal bundle. For isometric embedding, $e(N\iota(Y)) = 0$ (normal bundle is parallelizable), therefore:
$$e(\tilde{h}^*T\mathbb{R}^N|_{\iota(Y)}) = e(h^*TY)$$
\end{proof}

\begin{theorem}[Existence of Observation Fiber Bundle-Topological Equivalence Version]
Let $M$ be compact symplectic manifold, $h: M \to Y$ be observation map. Observation-induced fiber bundle exists if and only if:
\begin{enumerate}[label=(\arabic*)]
\item \textbf{Euler class condition:} $\langle e(h^*TY), [M] \rangle = 0$
\item \textbf{Stiefel-Whitney class condition:} $w_2(TM) = h^*w_2(TY)$  
\item \textbf{Dimension compatibility:} $\dim M \geq \dim Y$
\end{enumerate}

When $Y$ is non-Euclidean, conditions are equivalently stated as conditions on $\tilde{h} = \iota \circ h$ through isometric embedding $\iota: Y \hookrightarrow \mathbb{R}^N$.
\end{theorem}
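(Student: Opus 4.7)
The plan is to prove the theorem by separately establishing necessity and sufficiency of the three conditions, using Lemma \ref{lem:Whitney_emb} to reduce the general case to $Y = \mathbb{R}^k$, and treating the observation-induced fiber bundle as the closed-disk subbundle of the pulled-back cotangent bundle $h^*T^*Y \to M$. At the outset I would note that condition (3) is automatic: the constant-rank hypothesis $\mathrm{rank}(dh) = k$ on a nonempty open set of $M$ forces $\dim M \geq k = \dim Y$, and this is a prerequisite for $h^*T^*Y$ to even be a well-defined rank-$k$ vector bundle.

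For necessity of (1) and (2), suppose $\pi: E \to M$ exists. The ideal-observation convention furnishes a distinguished zero section $s_0: M \to E$, $s_0(x) = 0 \in T^*_{h(x)}Y$, around which the closed disks of radius $\delta(x)$ are fibered. For condition (1), I would argue that the radial framing of the fiber bundle (used to define $\rho_x$ and $\delta$ globally as non-degenerate data) exists exactly when $h^*TY$ admits a nowhere-vanishing section over $M\setminus\{s_0\}$; compactness of $M$ then identifies the obstruction with the Euler number $\langle e(h^*TY),[M]\rangle$, which must vanish. For condition (2), I would split $TE|_{s_0(M)} \cong TM \oplus h^*TY$, apply the Whitney product formula to obtain $w_2(TE)|_{s_0(M)} = w_2(TM) + w_1(TM)\smile w_1(h^*TY) + h^*w_2(TY)$, and use that $TM$ is orientable (symplectic) and $h^*TY$ is orientable (pullback of the Riemannian $TY$) to kill the $w_1$-cross term; coherence of smooth bundle structure on $E$ then forces $w_2(TM) = h^*w_2(TY)$ in $H^2(M;\mathbb{Z}_2)$.

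For sufficiency, I would construct $E$ directly. Condition (3) allows $h$ to be arranged as a constant-rank map so $h^*T^*Y$ is a genuine vector bundle. The vanishing in (1), via standard obstruction theory on the compact manifold $M$, produces the nowhere-zero data necessary to globally define the smooth fiber metric family $\rho$ and radius function $\delta$ with $\delta_{\min} > 0$. The Stiefel-Whitney equality in (2) lifts the local trivializations from the earlier local-bundle lemma to globally compatible transition functions, so they glue to a smooth disk bundle. For non-Euclidean $Y$, Lemma \ref{lem:Whitney_emb} transfers both the Euler class and $w_2$ statements between $h$ and $\tilde h = \iota \circ h$, giving the claimed equivalence.

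The hard part will be the $w_2$ identification in the necessity direction. Since $E$ is a disk bundle rather than a vector bundle, the computation of $w_2(TE)$ requires a tubular-neighborhood reduction to the associated rank-$k$ vector bundle, and isolating the ``pure'' obstruction coming from $h$ (as opposed to that of $TM$ alone) demands a careful long-exact-sequence or Leray-Serre argument on the mod-$2$ cohomology of the projection $\pi$. A secondary difficulty lies in making the Euler-class necessity precise: one must identify the exact global section whose obstruction is measured by $\langle e(h^*TY),[M]\rangle$, and verify that the degenerate behavior of $\delta$ near $\partial\mathcal{W}$ does not create spurious local obstructions that survive to the global topological count.
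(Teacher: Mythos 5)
The paper states this theorem without any proof (only Lemma \ref{lem:Whitney_emb} is proved in the appendix), so your proposal must stand on its own, and as written it has a genuine gap in both directions. The crux is that the object whose existence is being characterized --- the disk bundle $E=\{(x,\xi):\|\xi\|_{\rho_x}\le\delta(x)\}\subset h^*T^*Y$ --- exists unconditionally for any smooth $h$ of constant rank: the pullback $h^*T^*Y$ is always a vector bundle, a fiber metric $\rho$ and a positive radius function $\delta$ always exist by a partition-of-unity argument, and the transition functions of a pullback bundle are automatically compatible. Consequently your necessity argument for condition (1) fails at its first step: defining $\rho_x$ and $\delta$ globally does not require a nowhere-vanishing section of $h^*TY$ over $M\setminus s_0(M)$ (a fiber metric is not a framing), so no Euler-class obstruction is produced this way; you would need to identify some additional structure genuinely demanded by the definition of the observation bundle that forces $\langle e(h^*TY),[M]\rangle=0$, and none is identified. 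Symmetrically, in the sufficiency direction conditions (1) and (2) are never actually used: no obstruction theory is needed to produce $\rho$ and $\delta$, and the Stiefel--Whitney equality plays no role in gluing local trivializations of a pullback bundle, so your construction neither uses nor explains the stated conditions.

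The $w_2$ necessity has the same problem in sharper form. Your Whitney-sum computation $w_2(TE)|_{s_0(M)}=w_2(TM)+h^*w_2(TY)$ (with the $w_1$ cross term killed by orientability of $TM$ and $h^*TY$) is correct, and over $\mathbb{Z}_2$ the claimed condition $w_2(TM)=h^*w_2(TY)$ is precisely the statement $w_2(TE)|_{s_0(M)}=0$. But the inference that ``coherence of smooth bundle structure on $E$'' forces this class to vanish is a non sequitur: a smooth disk bundle exists with arbitrary $w_2(TE)$, so mere existence of $E$ gives no control over it. The only way to close the ``only if'' direction is to build into the definition of ``observation-induced fiber bundle'' extra requirements such as those of the main-text theorem on fiber-bundle symplectic structure (where $w_2(TE_{\mathcal{W}})=0$ is imposed), and then prove that this package is equivalent to conditions (1)--(3); your proposed tubular-neighborhood/Leray--Serre analysis would only recompute $w_2(TE)$, not force it to be zero. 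So the difficulty you flag at the end is not merely technical: without a precise statement of what additional structure ``existence'' is supposed to include, the necessity of both (1) and (2) cannot be established, and the equivalence as stated is not proved by the outline. The final reduction of the non-Euclidean case via Lemma \ref{lem:Whitney_emb} matches the paper's intent, but note that the lemma's own proof (claiming the normal bundle of an isometric embedding is parallelizable) is itself questionable, so leaning on it uncritically inherits that weakness.
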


\begin{remark}[Constructive Existence of Isometric Embedding]
For compact Riemannian manifold $Y$, Nash embedding theorem guarantees existence of isometric embedding $\iota: Y \hookrightarrow \mathbb{R}^N$ ($N \leq \frac{k(k+1)(3k+11)}{2}$). Specific construction can be achieved through:
\begin{enumerate}
\item Gauss map: Embed $Y$ into unit sphere bundle of its tangent bundle
\item Whitney trick: Eliminate self-intersection points
\item Nash iteration: Gradually approximate isometry
\end{enumerate}
\end{remark}

\subsection{Geometric Classification}

\begin{theorem}[Geometric Classification of Observation Fiber Bundles]
Two observation-induced fiber bundles $(E_1, \pi_1, h_1)$ and $(E_2, \pi_2, h_2)$ are geometrically equivalent if and only if there exists diffeomorphism $\phi: E_1 \to E_2$ such that:
\begin{enumerate}[label=(\arabic*)]
\item $\pi_2 \circ \phi = \pi_1$ (fiber preservation)
\item $\phi^*\omega_{E_2} = \omega_{E_1}$ (symplectic structure preservation)
\item $h_2 \circ \pi_2 \circ \phi = h_1 \circ \pi_1$ (observation compatibility)
\end{enumerate}
\end{theorem}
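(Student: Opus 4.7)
The plan is to treat this statement as an \emph{if and only if} characterization where the right-hand side gives both the definition and a constructive criterion for geometric equivalence. I would organize the proof into three logical blocks: (a) verify that the three conditions define a bona fide equivalence relation; (b) establish the reverse implication (tautological/definitional); (c) prove the forward implication by constructing $\phi$ explicitly from the given data and then checking that it satisfies all three compatibility conditions simultaneously.

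First I would check the equivalence relation properties: reflexivity via $\phi = \mathrm{id}_{E}$, symmetry by noting that condition~(1) forces $\phi$ to cover a base diffeomorphism $\bar\phi: \mathcal{W}_1 \to \mathcal{W}_2$ so $\phi^{-1}$ is well-defined and inherits conditions (2)--(3) by pulling back, and transitivity by composition. This step is routine but essential because the statement involves "equivalence." Then for the reverse direction I would unpack what each condition geometrically entails: (1) means $\phi$ descends to a base diffeomorphism preserving fibers pointwise; (3) combined with (1) gives $h_2 \circ \bar\phi = h_1$, which by the local diffeomorphism property of $h_1, h_2$ implies $\bar\phi$ is uniquely determined on the image $h_1(\mathcal{W}_1)$; (2) ensures all derived Poisson-geometric structures transfer correctly.

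For the forward direction, I would construct $\phi$ in two stages using the standard decomposition $\omega_E = \pi^*\omega_{\mathcal{W}} + \omega_{\text{fib}} + \Omega_{\text{mix}}$ established earlier. The base component $\bar\phi$ is forced by the observation compatibility $h_2 \circ \bar\phi = h_1$ together with the diffeomorphism property of $h_i$ onto their images. The fiberwise component is obtained by transporting cotangent vectors via $(dh_2)^{-1} \circ dh_1$, which is well-defined because both $h_i$ have maximal rank. The symplectic compatibility then splits into three subclaims: $\bar\phi^*\omega_{\mathcal{W}_2} = \omega_{\mathcal{W}_1}$ by Darboux-type uniqueness given the induced observation data, $\phi^*\omega_{\text{fib},2} = \omega_{\text{fib},1}$ by naturality of the canonical symplectic form on cotangent bundles, and $\phi^*\Omega_{\text{mix},2} = \Omega_{\text{mix},1}$ by transformation of the structure functions under the covariant derivatives of the observation-adaptive connection.

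The hard part will be the last subclaim—compatibility of the mixing term $\Omega_{\text{mix}}$. This requires showing that the curvature-derived structure functions $C_{iab}, D_{iab}, E_{iab}, F_{iab}$ transform covariantly under $\phi$, which in turn rests on the \emph{naturality} of the observation-adaptive connection: two observation maps related by $h_2 \circ \bar\phi = h_1$ must induce connections whose curvature tensors are related by pullback. I would prove this by invoking the uniqueness statement in the Connection Existence theorem and showing that $\bar\phi^* \nabla_2$ satisfies the defining properties (metric compatibility, vanishing torsion, observation compatibility) of $\nabla_1$; uniqueness then forces them to coincide. A secondary technical issue is handling the boundary region $\{x : d(x, \partial\mathcal{W}) < \epsilon\}$, where the cutoff function $\chi$ and degenerate $\omega_{\text{boundary}}$ appear in the symplectic form. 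Here I would require $\bar\phi$ to preserve the distance-to-boundary function (which follows from it being a diffeomorphism of $\mathcal{W}_1$ onto $\mathcal{W}_2$ that extends continuously to the closures), ensuring $\chi_2 \circ \bar\phi = \chi_1$ and hence preservation of the full decomposed form.
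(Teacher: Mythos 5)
There is a genuine mismatch here, and it starts before any computation: the paper offers no proof of this statement at all, because the three conditions \emph{are} the paper's definition of geometric equivalence --- ``geometrically equivalent'' is not defined anywhere else, so the ``if and only if'' is definitional and the reverse direction you call tautological is in fact the entire content. Your proposal, by contrast, treats the forward direction as a substantive rigidity theorem: given some prior notion of equivalence you try to \emph{construct} $\phi$ from the observation maps alone. But there is no independent data from which to build it, and the construction you sketch does not go through. The base map is not ``forced'' by $h_2 \circ \bar\phi = h_1$: the observation maps take values in a $k$-dimensional manifold $Y$ with $k \le 2n = \dim M$ in general, so this equation constrains only $k$ of the $2n$ base directions and leaves $\bar\phi$ wildly underdetermined; even when it is determined, nothing in the hypothesis makes $\bar\phi$ symplectic on the base --- ``Darboux-type uniqueness'' gives local normal forms for a single symplectic form, not that a map dictated by observation data pulls $\omega_{\mathcal{W}_2}$ back to $\omega_{\mathcal{W}_1}$.

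The fiberwise step has the same problem in sharper form. The fibers are not full cotangent spaces but closed balls $\{\|\xi\|_{\rho_{i,x}} \le \delta_i(x)\}$ with bundle-specific radius functions $\delta_1, \delta_2$ and metrics $\rho_1, \rho_2$; the transport $(dh_2)^{-1}\circ dh_1$ need not map one ball into the other, so your candidate $\phi$ need not even be a well-defined map $E_1 \to E_2$, let alone satisfy $\phi^*\omega_{\text{fib},2} = \omega_{\text{fib},1}$ and match the mixing terms. Your final subclaim --- naturality of the observation-adaptive connection via the uniqueness part of the Connection Existence theorem --- is a reasonable idea \emph{if} one had already arranged $\bar\phi^*\rho_2 = \rho_1$ and $\bar\phi^*g_{\mathcal{W}_2} = g_{\mathcal{W}_1}$, but those identities are exactly what is missing, not consequences of anything assumed. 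In short: what can be salvaged from your plan is the routine part (conditions (1)--(3) define an equivalence relation, with symmetry and transitivity checked as you describe), which is all the paper implicitly intends; the forward construction should be dropped or restated as a separate, explicitly hypothesized rigidity claim (e.g.\ assuming $k = 2n$, common image of the $h_i$, and matched $(\delta_i, \rho_i)$ data), since as written it asserts something false in general.
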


\section{Technical Implementation Details}
\label{appendix:technical_details}

\subsection{Complete Algorithm Implementation}
\label{appendix:algorithm_details}

Following gives complete technical implementation of geometry-preserving integration algorithm:

\begin{algorithm}[H]
\caption{Observation Constraint-Preserving Integration Algorithm (Complete Version)}
\label{alg:geometric_integrator}
\begin{algorithmic}[1]
\Require Initial condition $(x_0, \xi_0) \in E_{\mathcal{W}}$, time step $h$, precision $\epsilon$
\Ensure Trajectory $(x_n, \xi_n)$ preserves constraint $\Phi \geq 0$ and fiber bundle structure

\For{$n = 0, 1, 2, \ldots$}
    \State \textbf{// Step 1: Compute covariant dynamical vector field}
    \State $X_H \leftarrow \omega_E^{-1}(dH)|_{(x_n, \xi_n)}$ \Comment{Hamiltonian vector field}
    \State $\dot{x}_n \leftarrow \pi_*(X_H)$ \Comment{Base space velocity projection}
    
    \State \textbf{// Step 2: Observation-adaptive connection computation}
    \State Base manifold connection: $\Gamma^k_{ij}(x_n) = \frac{1}{2}g_M^{kl}(\partial_i g_{M,jl} + \partial_j g_{M,il} - \partial_l g_{M,ij})$
    \State Fiber metric covariant derivative: $\nabla_i \rho_{ab} = \partial_i \rho_{ab} - \Gamma^k_{ij}\frac{\partial \rho_{ab}}{\partial x^k}$
    \State Fiber connection coefficients: $\Gamma^a_{bc}(x_n) = \frac{1}{2}\rho^{ad}(\nabla_b \rho_{cd} + \nabla_c \rho_{bd} - \nabla_d \rho_{bc})$
    \State Horizontal lift: $\Xi_n \leftarrow (dh)^*(\dot{x}_n) + \Gamma^a_{bc}\xi_n^b\dot{x}_n^c$
    
    \State \textbf{// Step 3: Geometric prediction step}
    \State $(x^*_{n+1}, \xi^*_{n+1}) \leftarrow (x_n, \xi_n) + h(\dot{x}_n, \Xi_n)$
    
    \State \textbf{// Step 4: Constraint geometric projection}
    \If{$\Phi(x^*_{n+1}, \xi^*_{n+1}) < 0$}
        \State $\nabla\Phi \leftarrow d\Phi(x^*_{n+1}, \xi^*_{n+1})$
        \State $\nu \leftarrow g_E^{-1}(\nabla\Phi)/\|g_E^{-1}(\nabla\Phi)\|_E$ \Comment{Geometric normal vector}
        \State $\alpha \leftarrow |\Phi(x^*_{n+1}, \xi^*_{n+1})|/\langle\nabla\Phi, \nu\rangle_E$
        \State $(x_{n+1}, \xi_{n+1}) \leftarrow (x^*_{n+1}, \xi^*_{n+1}) + \alpha\nu$
    \Else
        \State $(x_{n+1}, \xi_{n+1}) \leftarrow (x^*_{n+1}, \xi^*_{n+1})$
    \EndIf
    
    \State \textbf{// Step 5: Fiber bundle boundary handling}
    \If{$\|\xi_{n+1} - h^*(x_{n+1})\|_{\rho_{x_{n+1}}} > \delta(x_{n+1})$}
        \State $\xi_{n+1} \leftarrow h^*(x_{n+1}) + \delta(x_{n+1}) \cdot \frac{\xi_{n+1} - h^*(x_{n+1})}{\|\xi_{n+1} - h^*(x_{n+1})\|_{\rho_{x_{n+1}}}}$
    \EndIf
    
    \State \textbf{// Step 6: Adaptive step size control}
    \State Compute geometric error: $\epsilon_{\text{geo}} = \max\{|\nabla_i \rho_{jk}|, |\omega_{E,n} - \omega_E|\}$
    \If{$\epsilon_{\text{geo}} > \epsilon$}
        \State $h \leftarrow h/2$ \Comment{Reduce step size}
        \State \textbf{goto} Step 2 \Comment{Recalculate}
    \EndIf
\EndFor
\end{algorithmic}
\end{algorithm}

\subsection{Complete Convergence Proof}
\label{appendix:convergence_proof}

\begin{proof}[Complete proof of Theorem \ref{thm:convergence}]
We prove the three key properties of the algorithm step by step.

\textbf{Step 1: Constraint preservation analysis}
Let $(x_n, \xi_n) \in \partial\mathcal{S}$, i.e., $\Phi(x_n, \xi_n) = 0$. Using third-order Taylor expansion:
\begin{align*}
\Phi(x^*_n, \xi^*_n) &= \Phi(x_n, \xi_n) + h\langle d\Phi, v_n \rangle + \frac{h^2}{2}\langle D^2\Phi(v_n, v_n) \rangle \\
&\quad + \frac{h^3}{6}\langle D^3\Phi(v_n, v_n, v_n) \rangle + O(h^4)
\end{align*}

where $v_n = (\dot{x}_n, \Xi_n)$ is velocity vector computed by algorithm.

Key observation: Geometry-preserving algorithm ensures Hamiltonian vector field $X_H$ is orthogonal to constraint gradient $d\Phi$, i.e., $\omega_E(X_H, d\Phi) = 0$, which is equivalent to $\langle d\Phi, v_n \rangle = 0$ (tangency condition). Therefore:
\begin{align*}
|\Phi(x^*_n, \xi^*_n)| &\leq \frac{h^2}{2}\|D^2\Phi\|_{L^\infty}\|v_n\|_E^2 + \frac{h^3}{6}L_3\|v_n\|_E^3 \\
&\leq C_1 h^2
\end{align*}

where $C_1 = \frac{1}{2}\|D^2\Phi\|_{L^\infty}\|X_H\|_{L^\infty}^2$, using $\|v_n\|_E \leq \|X_H\|_{L^\infty}$.

\textbf{Step 2: Symplectic structure error analysis}
In local Darboux coordinates $(x^i, \xi^a, \pi^a)$, fiber bundle symplectic form is represented as:
$$\omega_E = \sum_{i<j} \omega_{ij}^M(x) dx^i \wedge dx^j + \sum_{a=1}^k d\xi^a \wedge d\pi^a + \sum_{i,a} K_{ia}(x,\xi,\pi) dx^i \wedge d\xi^a + \sum_{i,a} L_{ia}(x,\xi,\pi) dx^i \wedge d\pi^a$$

where $K_{ia}, L_{ia}$ are mixed curvature terms from observation-adaptive connection. Algorithm's symplectic structure error comes from finite difference approximation:
\begin{align*}
|\omega_{E,n+1} - \omega_{E,n}| &= |\omega_E(x_{n+1}, \xi_{n+1}) - \omega_E(x_n, \xi_n)| \\
&\leq h\|\nabla\omega_E\|_{L^\infty}\|X_H\|_{L^\infty} + \frac{h^2}{2}\|\nabla^2\omega_E\|_{L^\infty}\|X_H\|_{L^\infty}^2 \\
&\leq C_2 h^2
\end{align*}

where $C_2$ depends on connection curvature bound $K$ and smoothness of Hamiltonian vector field.

\textbf{Step 3: Global convergence analysis}
Let $e_n = \|(x_n, \xi_n) - (x(t_n), \xi(t_n))\|_E$ be cumulative error at step $n$. Algorithm's recursive error equation is:
$$e_{n+1} \leq e_n + h \|F(x_n, \xi_n) - F(x(t_n), \xi(t_n))\|_E + \text{LTE}_n$$

where $F$ is geometrically corrected dynamical vector field, and $\text{LTE}_n$ is local truncation error at step $n$.

Using Lipschitz continuity of $F$: $\|F(u) - F(v)\|_E \leq L_F\|u - v\|_E$, and local truncation error estimate $\text{LTE}_n \leq C h^2$ (from Steps 1-2), we get:
$$e_{n+1} \leq (1 + hL_F)e_n + Ch^2$$

Applying discrete Grönwall inequality, when initial error $e_0 = 0$:
$$e_n \leq \frac{Ch^2}{hL_F} \sum_{j=0}^{n-1}(1 + hL_F)^j = \frac{Ch}{L_F}((1 + hL_F)^n - 1) \leq \frac{Ch}{L_F}(e^{L_F t_n} - 1) \leq Ch e^{L_F t_n}$$

For bounded time interval $t_n \leq T$, we obtain the required convergence estimate.
\end{proof}

\subsection{Implementation Variants and Optimizations}
\label{appendix:implementation_variants}

For approximately flat working regions, simplified connection computation can be used to improve computational efficiency. When base manifold curvature is small, fiber connection coefficients simplify to $\Gamma^a_{bc} \approx \frac{1}{2}\rho^{ad}(\partial_b \rho_{cd} + \partial_c \rho_{bd} - \partial_d \rho_{bc})$, and covariant derivatives approximate partial derivatives $\nabla_i \rho_{ab} \approx \partial_i \rho_{ab}$. This simplification significantly reduces computational complexity while maintaining $O(h^2)$ convergence order.

Integrating geometry-preserving conditions as regularization terms into neural network training is another important implementation variant. Design of geometry-aware loss function directly derives from our theoretical analysis, ensuring learning process automatically respects fiber bundle geometry. In specific implementation, weights $\lambda_{\text{geo}}, \alpha, \beta$ of geometric loss terms need adjustment according to specific applications; typically $\lambda_{\text{geo}} \in [0.1, 1.0]$ achieves good balance between task performance and geometric consistency.

For large-scale systems, connection coefficient computation can be parallelized to improve computational efficiency. Computation of base manifold connection coefficients $\Gamma^k_{ij}$ can be parallelized by spatial regions, and fiber connection coefficients $\Gamma^a_{bc}$ can be computed in parallel by fiber directions. Linear algebra operations in geometric projection step are particularly suitable for GPU acceleration, and vectorized implementation of projection operations can significantly improve real-time performance of large-scale systems.

Through this modular and hierarchical design, algorithm framework can be flexibly configured according to specific application requirements, maintaining both mathematical rigor of theory and meeting requirements for computational efficiency and real-time performance in engineering practice.

\bibliographystyle{plain}
\bibliography{references}

\begin{thebibliography}{10}

\bibitem{amari2016information}
Shun-ichi Amari.
\newblock {\em Information Geometry and Its Applications}.
\newblock Springer, Tokyo, 2016.

\bibitem{ames2019control}
Aaron~D Ames, Samuel Coogan, Magnus Egerstedt, Gennaro Notomista, Koushil Sreenath, and Paulo Tabuada.
\newblock Control barrier functions: Theory and applications.
\newblock {\em 18th European Control Conference (ECC)}, pages 3420--3431, 2019.

\bibitem{arnold1989mathematical}
Vladimir~I Arnold.
\newblock {\em Mathematical methods of classical mechanics}, volume~60.
\newblock Springer Science \& Business Media, 1989.

\bibitem{bronzino2000biomedical}
Joseph~D Bronzino.
\newblock {\em The Biomedical Engineering Handbook}.
\newblock CRC Press, Boca Raton, FL, 2nd edition, 2000.

\bibitem{dirac1964lectures}
Paul Adrien~Maurice Dirac.
\newblock {\em Lectures on quantum mechanics}, volume~2.
\newblock Yeshiva University, 1964.

\bibitem{dirac1950generalized}
Paul~AM Dirac.
\newblock Generalized hamiltonian dynamics.
\newblock {\em Canadian journal of mathematics}, 2(2):129--148, 1950.

\bibitem{ehresmann1950connexions}
Charles Ehresmann.
\newblock Les connexions infinit{\'e}simales dans un espace fibr{\'e} diff{\'e}rentiable.
\newblock {\em Colloque de topologie (espaces fibr{\'e}s), Bruxelles}, pages 29--55, 1950.

\bibitem{hamilton1834general}
William~R Hamilton.
\newblock On a general method in dynamics.
\newblock {\em Philosophical Transactions of the Royal Society of London}, 124:247--308, 1834.

\bibitem{henneaux1992quantization}
Marc Henneaux and Claudio Teitelboim.
\newblock {\em Quantization of Gauge Systems}.
\newblock Princeton University Press, Princeton, 1992.

\bibitem{jacobi1866vorlesungen}
Carl Gustav~Jacob Jacobi.
\newblock {\em Vorlesungen {\"u}ber Dynamik}.
\newblock Reimer, 1866.

\bibitem{jazwinski1970stochastic}
Andrew~H Jazwinski.
\newblock {\em Stochastic Processes and Filtering Theory}.
\newblock Academic Press, New York, 1970.

\bibitem{khalil2002nonlinear}
Hassan~K Khalil.
\newblock {\em Nonlinear Systems}.
\newblock Prentice Hall, Upper Saddle River, NJ, 3rd edition, 2002.

\bibitem{marsden1974reduction}
Jerrold Marsden and Alan Weinstein.
\newblock Reduction of symplectic manifolds with symmetry.
\newblock {\em Reports on mathematical physics}, 5(1):121--130, 1974.

\bibitem{murray1994mathematical}
Richard~M Murray, Zexiang Li, and S~Shankar Sastry.
\newblock {\em A Mathematical Introduction to Robotic Manipulation}.
\newblock CRC Press, Boca Raton, FL, 1994.

\bibitem{nemirovski2007convex}
Arkadi Nemirovski and Alexander Shapiro.
\newblock Convex approximations of chance constrained programs.
\newblock {\em SIAM Journal on Optimization}, 17(4):969--996, 2007.

\bibitem{nielsen2000quantum}
Michael~A Nielsen and Isaac~L Chuang.
\newblock {\em Quantum Computation and Quantum Information}.
\newblock Cambridge University Press, Cambridge, 2000.

\bibitem{nocedal2006numerical}
Jorge Nocedal and Stephen~J Wright.
\newblock {\em Numerical Optimization}.
\newblock Springer, New York, 2nd edition, 2006.

\bibitem{noether1918invariante}
Emmy Noether.
\newblock Invariante variationsprobleme.
\newblock {\em Nachrichten von der Gesellschaft der Wissenschaften zu G{\"o}ttingen, Mathematisch-Physikalische Klasse}, 1918:235--257, 1918.

\bibitem{olfati2007consensus}
Reza Olfati-Saber, J~Alex Fax, and Richard~M Murray.
\newblock Consensus and cooperation in networked multi-agent systems.
\newblock {\em Proceedings of the IEEE}, 95(1):215--233, 2007.

\bibitem{paden2016survey}
Brian Paden, Michal Cap, Sze~Zheng Yong, Dmitry Yershov, and Emilio Frazzoli.
\newblock A survey of motion planning and control techniques for self-driving urban vehicles.
\newblock {\em IEEE Transactions on Intelligent Vehicles}, 1(1):33--55, 2016.

\bibitem{rawlings2017model}
James~Blake Rawlings, David~Q Mayne, and Moritz Diehl.
\newblock {\em Model Predictive Control: Theory, Computation, and Design}.
\newblock Nob Hill Publishing, Madison, WI, 2nd edition, 2017.

\bibitem{steenrod1951topology}
Norman~Earl Steenrod.
\newblock {\em The Topology of Fibre Bundles}.
\newblock Princeton University Press, Princeton, 1951.

\bibitem{stevens2015aircraft}
Brian~L Stevens, Frank~L Lewis, and Eric~N Johnson.
\newblock {\em Aircraft Control and Simulation: Dynamics, Controls Design, and Autonomous Systems}.
\newblock John Wiley \& Sons, Hoboken, NJ, 3rd edition, 2015.

\bibitem{yang1954conservation}
Chen-Ning Yang and Robert~L Mills.
\newblock Conservation of isotopic spin and isotopic gauge invariance.
\newblock {\em Physical Review}, 96(1):191--195, 1954.

\bibitem{zheng2025learning}
Dongzhe Zheng and Wenjie Mei.
\newblock Learning dynamics under environmental constraints via measurement-induced bundle structures.
\newblock In {\em Proceedings of the 41st International Conference on Machine Learning}. PMLR, 2025.

\bibitem{zhou1996robust}
Kemin Zhou, John~C Doyle, and Keith Glover.
\newblock {\em Robust and Optimal Control}.
\newblock Prentice Hall, Upper Saddle River, NJ, 1996.

\end{thebibliography}

\end{CJK}

\end{document}